\documentclass{article}
\usepackage[%
journal=   
lang=american,   
]{ems-journal}

\newtheorem{theorem}{\textbf Theorem}[section]
\newtheorem{lemma}[theorem]{\textbf Lemma}

\newtheorem{corollary}[theorem]{\textbf Corollary}
\newtheorem{example}[theorem]{\textbf Example}
\newtheorem{definition}[theorem]{\textbf Definition}

\newtheorem{remark}[theorem]{\textbf Remark}
\newtheorem{assumption}[theorem]{\textbf Assumption}
\newcommand{\dist}{\operatorname{dist}}
\newcommand{\diam}{\operatorname{diam}}
\newcommand{\X}{\mathcal X}
\newcommand{\bR}{\mathbb{R}}

\def \bA {{\boldsymbol A}}
\def \bB {{\boldsymbol B}}
\def \bC {{\boldsymbol C}}
\def \bD {{\boldsymbol D}}
\def \bE {{\boldsymbol E}}
\def \bI {{\boldsymbol I}}
\def \bP {{\boldsymbol P}}
\def \bQ {{\boldsymbol Q}}
\def \bH {{\boldsymbol H}}
\def \ba {{\boldsymbol a}}
\def \bb {{\boldsymbol b}}
\def \bc {{\boldsymbol c}}
\def \bq {{\boldsymbol q}}

\def \bv {{\boldsymbol v}}
\newtheorem{theoremA}{Theorem}

\numberwithin{equation}{section}

\begin{document}

\title{Smallest Singular Value Estimates for Nonuniform Fourier Matrices via Periodic Nonuniform Sampling\footnote{*Corresponding author: Rongrong Lin. E-mail: linrr@gdut.edu.cn}}
\titlemark{The Smallest Singular Value of Nonuniform Fourier Matrices}


%

\emsauthor{1}{
	\givenname{Liang}
	\surname{Chen}
	\orcid{0000-0003-3750-1071}}{L.~Chen}
\emsauthor{2}{
	\givenname{Rongrong}
	\surname{Lin*}
	\orcid{0000-0002-6234-2183}}{R.~Lin}
\emsauthor{3}{
	\givenname{Haizhang}
	\surname{Zhang}
	\orcid{0000-0002-8241-3145}}{H.~Zhang}

\Emsaffil{1}{
	\department{Department of Mathematics}
	\organisation{Jiujiang University}
	\zip{332000}
	\city{Jiujiang}
	\country{P. R. China}
	\affemail{chenliang3@alumni.sysu.edu.cn}}
\Emsaffil{2}{
	\department{School of Mathematics and Statistics}
	\organisation{Guangdong University of Technology}
	\zip{510520}
	\city{Guangzhou}
	\country{P. R. China} 
	\affemail{linrr@gdut.edu.cn}}
\Emsaffil{3}{
	\department{School of Mathematics (Zhuhai)}
	\organisation{Sun Yat-sen University}
	\zip{519082}
	\city{Zhuhai}
	\country{P. R. China}
	\affemail{zhhaizh2@sysu.edu.cn}}

\classification[15A60, 42A15]{15A12}

\keywords{nonuniform Fourier matrices, the smallest singular value, periodic nonuniform sampling, Kadec's 1/4 theorem, trigonometric quadrature}

\begin{abstract}
We study the smallest singular value of nonuniform Fourier matrices in two settings: clustered nodes and perturbations of an equispaced grid. By reducing the problem to spectral norm estimates for periodic nonuniform interpolation matrices, we obtain nearly optimal bounds in both cases. For clustered nodes, we derive the first local separation condition in which each required gap depends only on the sizes of the two neighboring clusters. For perturbations with the bound \(1/4\leq L<1/2\), our result confirms the conjecture of Austin and Trefethen on the \(2\)-norm Lebesgue constant up to a logarithmic factor.
\end{abstract}
\maketitle




\section{Introduction}
Fourier matrices with nonuniform nodes arise naturally in super-resolution \cite{batenkov2023super,li2021stable,li2022stability}, trigonometric interpolation \cite{Austin2016,austin2023trigonometric,austin2017,yu2023on}, nonuniform discrete Fourier transform \cite{dutt1993fast,Gelb2014}\cite[Chapter 7]{plonka2023numerical}, and sampling discretization \cite{ChuiShenZhong1993,chui1999polynomial,MarzoSeip2009}. In this paper, we consider nonuniform Fourier matrices of the form
\begin{equation}\label{EqAX}
\bA_{\X}
=
\Big[
    \exp(-2\pi i j x_k/M)
    :
    j\in \mathbb Z_M,\ k\in\mathbb Z_s
\Big],
\end{equation}
where $i$ is the imaginary unit,
\(
\mathbb Z_M:=\{0,1,\ldots,M-1\}
\) and \(
\mathcal X=\{x_k\mid k\in\mathbb Z_s\}\subset \mathbb R/M\mathbb Z=[0,M).
\)
When the nodes are equispaced and $s=N$, $A_{\X}$ in \eqref{EqAX} reduces to the classical discrete Fourier transform matrix, whose columns are mutually orthogonal and whose singular values are all identically equal to $\sqrt{M}$.

Estimating the singular values of nonuniform Fourier matrices is fundamental to analyzing the stability of super-resolution algorithms and has attracted substantial attention in recent years \cite{aubel2019vandermonde,batenkov2023super,batenkov2021single,li2020super,shah2026}. Let \(\Delta\) denote the minimum wrap-around separation between the normalized nodes \(x_k/M\) in the unit circle. Moitra \cite[Theorems 1.1 and 1.3]{Moitra2015} established a phase transition phenomenon: if \(\Delta>1/(M-1)\), then the condition number of \(A_{\mathcal X}\) in \eqref{EqAX} is bounded above by
\[
   \sqrt{(M+1/\Delta-1)/(M-1/\Delta-1)}.
\]
In contrast, if \(M=(1-\epsilon)/\Delta\) with \(0<\epsilon<1\) and $s=\Omega(M)$, there exists a set $\X$ such that the condition number of $A_{\X}$ scales as \(2^{\Omega(\epsilon s)}\), where $\Omega$ denotes the big-Omega symbol.  In addition, Barnett \cite{barnett2022how} estimated the condition number of a contiguous submatrix of the Fourier matrix and proved that this quantity grows exponentially, with an explicit constant inside the exponent.

Inspired by the Beurling-Selberg
majorant and minorant \cite{Selberg1991,MontgomeryVaughan1974}, the proof strategy from \cite{Moitra2015} constructed entire functions with favorable time-frequency localization as window functions for one-sided approximating the indicator function of an interval. 
Later, Aubel and B\"{o}lcskei \cite[(34)]{aubel2019vandermonde} weakened the threshold condition \(\Delta>1/(M-1)\) from Theorem 1.1 of \cite{Moitra2015} to \(\Delta > 1/M\) and derived a sharper upper bound \[\sqrt{(M+1/\Delta-1)/(M-1/\Delta)}.\]
These approaches estimate extremal singular values through Beurling–Selberg type majorants or minorants and the associated quadratic forms of Fourier Gram matrices.

The method developed here follows a different route. We do not construct Beurling–Selberg type majorants or
minorants for an interval indicator. Instead, we embed the Fourier matrix of interest into a square nonuniform
Fourier matrix and represent the relevant part of its inverse by periodic Lagrange interpolation functions. This reduces the smallest singular value problem to a spectral-norm estimate for a periodic nonuniform interpolation matrix. For clustered nodes, this reduction is combined with a new construction of near-integer auxiliary nodes and compensation points adapted to individual clusters. The local character of this construction is precisely what allows the intercluster separation requirement to depend on the sizes of neighboring clusters rather than on the largest cluster in the entire configuration.

\textbf{Clustered nodes.} The separation threshold \(\Delta> 1/M\), however, excludes an important super-resolution regime where some frequencies are spaced closer than the Rayleigh length. A commonly examined approach to represent this scenario is the cluster-separation model: nodes inside the same cluster can be closer than $1/M$, whereas different clusters are clearly separated \cite{batenkov2021spectral,BatenkovGoldmanYomdin2021,batenkov2020conditioning,batenkov2021single,li2025multiscale,li2025new}. Our first contribution (see Theorem \ref{T001}) is to establish a novel lower bound for the smallest singular value of nonuniform Fourier matrices equipped with cluster-structured frequencies.

To facilitate the subsequent analysis, we first introduce the necessary notation and assumptions. For any matrix $\bB$, the spectral norm 
\begin{equation}\label{eqspectral}
\|\bB\|_2:=\max_{\|\ba\|_2=1}\|\bB\ba\|_2
\end{equation}
is its largest singular value \cite[Definition 5.6.1 and Example 5.6.6]{Horn2012}, where $\|\ba\|_2$ denotes the standard Euclidean norm of the vector $\ba$. When $\bB$ is invertible, $\bB^{-1}$ denotes its inverse, and $\|\bB^{-1}\|_2=1/\sigma_{\min}(\bB)$ is the reciprocal of $\bB$'s smallest singular value $\sigma_{\min}(\bB)$. For a vector $\bv\in\bR^n$, let ${\textrm{diag}}(\bv)$ denote the $n\times n$ diagonal matrix with $v_j$ as its $j$-th diagonal entry. For \( a \in \mathbb{R} \) and $b\ne0$, the expression \(r= a \pmod{b}\) denotes
\(a - r \in b\mathbb{Z}\).
Given two values $a,b\in\mathbb{R}$, their distance modulo $M$ is defined by
\[
\operatorname{dist}_M(a,b):=\min_{n\in\mathbb{Z}}|a-b-nM|.
\]
For any subset $T_1,T_2\subset\mathbb{R}$ and $x\in\mathbb{R}$, we define 
\[
\dist_M(x,T_1):=\min_{a\in T_1}\dist_M(x,a), \quad \dist_M(T_1,T_2):=\min_{a\in T_1,b\in T_2}\dist_M(a,b),
\]
and
\[\diam_M(T_1):=\max_{a,b\in T_1}\dist_M(a,b).
\]
The cardinality of a subset $S\subset\mathbb{R}$ is denoted by $|S|$. For notational convenience, we write the asymptotic notation $g_1(t)\lesssim g_2(t)$ (respectively, $g_1(t)\gtrsim g_2(t)$) whenever there exists an absolute constant $C>0$ such that $g_1(t)\le C g_2(t)$ (respectively, $g_1(t)\ge C g_2(t)$ ) holds for all $t\in T$, where $T$ denotes the domain of the variable $t$. We further write $ g_1(t)\asymp g_2(t)$ to indicate that both $ g_1(t)\lesssim g_2(t)$ and $ g_2(t)\lesssim g_1(t)$ are valid.

\begin{assumption}\label{Assumption} Suppose that the nonempty set  $\X=\{x_i\mid i\in\mathbb{Z}_s\}\subset \mathbb{R} / M\mathbb{Z}=[0,M)$ can be decomposed into $r\ge2$ separated and disjoint clusters,
\[
\X = \mathcal{C}_1\cup \mathcal{C}_2\cup \cdots \cup \mathcal{C}_r,
\]
which are indexed in cyclic order on $\mathbb{R}/M\mathbb{Z}$, with $\mathcal{C}_{r+1}:=\mathcal{C}_1$.
Define $\alpha_{q}:=\diam_{M}(\mathcal{C}_q)$ for any $p,q\in\{1,2,\dots,r\}$. 
Assume that
\[ 
    \min_{1\le i\neq j\le s}\operatorname{dist}_M(x_i,x_j)\ge M\delta, \quad \delta\le 1/M, \mbox{ and }\max_{1\le q\le r}|\mathcal{C}_q|=\kappa\ge2.
\]
Moreover, whenever \(r\ge2\), we assume that
\[
\dist_{M}(\mathcal{C}_p,\mathcal{C}_q)=:\beta_{p,q}>\alpha_{p,q}:=\max\{\alpha_{p},\alpha_{q}\}\quad\text{for any distinct} ~p,q\in\{1,2,\dots,r\}
\]
and
\[
\min_{1\le p\neq q\le r}\dist_{M}(\mathcal{C}_p,\mathcal{C}_q):=\beta>0.
\]
\end{assumption}

In Assumption \ref{Assumption},  \(\kappa\) denotes the largest cluster size, \(\delta\) is the minimal separation within the whole set of points, \(\alpha\) is an upper bound for the diameters of the clusters, and \(\beta\) measures the separation between distinct clusters.

Our first contribution can be summarized as follows.
\begin{theorem}\label{T001} Let $M\ge 4s$, and \(\mathcal X:=\{x_k\mid k\in\mathbb Z_s\}\) be a subset of \(\mathbb R/M\mathbb Z= [0,M)\) satisfying Assumption \ref{Assumption}.  If  
\[
\beta_{p,p+1}\ge  |\mathcal{C}_p|+|\mathcal{C}_{p+1}|+2\mbox{ for all } p\in \{1,2,\dots,r\},
\] 
then the smallest singular value of $A_{\X}$ in \eqref{EqAX} is larger than $\sqrt{M}(CM\delta)^{\kappa-1}$, where $C>0$ is a constant.
\end{theorem}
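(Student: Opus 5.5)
The idea is the reduction announced in the introduction. We embed $\bA_{\X}$ into a square $M\times M$ nonuniform Fourier matrix and read off the smallest singular value of $\bA_{\X}$ from a block of the inverse. Concretely, we add $M-s$ auxiliary nodes $\mathcal Y=\{y_\ell\}$ so that $\mathcal Z=\X\cup\mathcal Y$ has $M$ distinct points mod $M$, and set $\bB=\bA_{\mathcal Z}=[\bA_{\X}\ \bA_{\mathcal Y}]$. Let $\bP$ be the $s\times M$ block of rows of $\bB^{-1}$ indexed by $\X$. Then $\bP\bA_{\X}=\bI_s$, so $\sigma_{\min}(\bA_{\X})\ge 1/\|\bP\|_2$.

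The entries of $\bB^{-1}$ are coefficients of periodic Lagrange fundamental functions. Row $k$ of $\bB^{-1}$ is the coefficient vector $(c_j)$ of the trigonometric polynomial $\ell_k(t)=\sum_{j\in\mathbb Z_M}c_j e^{-2\pi i j t/M}$ with $\ell_k(z)=\delta_{z,x_k}$ on $\mathcal Z$. A discrete Parseval identity on the integer grid then gives $\|\bP\|_2=M^{-1/2}\|\bQ\|_2$ up to absolute constants, where
\[
\bQ=[\ell_k(n): n\in\mathbb Z_M,\ k\in\mathbb Z_s]
\]
is the periodic nonuniform interpolation matrix. The goal therefore becomes $\|\bQ\|_2\lesssim (CM\delta)^{-(\kappa-1)}$.

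The auxiliary nodes are chosen at, or very near, integers, and this is where the hypothesis $\beta_{p,p+1}\ge|\mathcal C_p|+|\mathcal C_{p+1}|+2$ enters. For each cluster $\mathcal C_q$ we remove from $\mathbb Z_M$ about $|\mathcal C_q|$ integers closest to it and replace them by the cluster points. We also place a few compensation points so that the count is exactly $M$ and each cluster sits in a window of integers it "owns". The gap condition ensures that neighbouring windows do not overlap, so each cluster needs room only for itself and its neighbour. Because $M\ge4s$, most nodes are untouched integers.

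With this choice, $\ell_k$ factors as a product of $\sin(\pi(t-z)/M)$-type terms. Relative to the uniform grid it is the Dirichlet-type cardinal function times a ratio
\[
\prod_{z\in\mathcal C_q\setminus\{x_k\}}\frac{\sin(\pi(t-z)/M)}{\sin(\pi(x_k-z)/M)}\cdot\prod_{m\ \text{removed}}\frac{\sin(\pi(x_k-m)/M)}{\sin(\pi(t-m)/M)},
\]
together with far-field factors from other clusters. The near-field denominators are at least $\gtrsim\delta$ each, giving the factor $(M\delta)^{-(\kappa-1)}$ with a constant per factor. The remaining factors stay bounded: pairing each cluster point with the removed integer it replaced makes every distant cluster contribute a ratio $1+O(\alpha_q/\dist)$, and the products converge.

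Finally, we bound $\|\bQ\|_2$. The values $\ell_k(n)$ vanish at retained integers except near clusters, so $\bQ$ is essentially supported on the removed integers near each cluster, plus tails. Its columns decay like $1/\dist_M(n,x_k)$ times the bounded products. A Schur test, or a Hilbert-matrix-type bound for the $1/(n-x_k)$ kernel (valid since the $x_k$ are separated from the retained integers by $\gtrsim1$), gives $\|\bQ\|_2\lesssim C^{\kappa}(M\delta)^{-(\kappa-1)}$ with no $\log$ loss.

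The main obstacle is the second step, the uniform control of the cross-cluster products. The bound must depend only on $\beta_{p,p+1}$ relative to the sizes of neighbouring clusters, not on a global $\kappa$. I would first try to pair each cluster point with its removed integer so that each ratio is $1+O(|\mathcal C_q|/\dist)$, and then sum the logarithms using the gap assumption.
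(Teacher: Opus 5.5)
Your reduction is correct and is equivalent to the paper's Lemma \ref{LemmaBsigmaL}. You have $\sigma_{\min}(\bA_{\X})\ge 1/\|\bP\|_2$, and $\|\bQ\|_2=\sqrt M\,\|\bP\|_2$ holds exactly because evaluation on $\mathbb Z_M$ is a DFT. Your $[\ell_k(n)]$ coincides, up to transposition and diagonal unimodular factors, with the nonzero rows of $\bI_s\bB_{\sigma,L}$.

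The gap is exactly at the step you flag, and the mechanism you propose for it fails. For $t$ far from block $q$, the logarithm of $\prod_p \sin(\pi(t-b_{q,p})/M)/\sin(\pi(t-n_{q,p})/M)$ is, to first order, $\Delta_q\,h(t-\bar n_q)$. Here $\Delta_q:=\sum_p (n_{q,p}-b_{q,p})$ is the net displacement of the block and $h(u)=\frac{\pi}{M}\cot\frac{\pi u}{M}\approx 1/u$. Summed over blocks, this is a discrete Hilbert transform of the net displacements, and it is not bounded.

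A concrete instance: take $\kappa=2$ and clusters $\{7q+0.2,\ 7q+0.2+M\delta\}$, $0\le q<r$, with $r=\lfloor M/16\rfloor$ and $M\delta\le 0.1$. All hypotheses of the theorem hold. Since $\sum_p b_{q,p}\equiv 0.4+M\delta \pmod 1$, no choice of removed integers gives $\Delta_q=0$; one always has $|\Delta_q|\ge 0.4$. The nearest-integer pairing gives $\Delta_q=0.6-M\delta$ for every $q$. Then $\sum_q \Delta_q h(t-\bar n_q)$ has size $\asymp \log r$, with opposite signs, at the first and the last block. So when $x_k$ lies in the first block and $n$ in the last, the quotient of far-field products entering $\ell_k(n)$ is of order $r^{c}$ for some $c>0$, a power of $M$.

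Your own bookkeeping is even weaker: $k_q$ factors $1+O(k_q/\operatorname{dist})$ per block, with logarithms summed using only the gap condition. For equal clusters at the minimal admissible gaps this gives $\sum_q k_q^2/\operatorname{dist}_q\asymp \kappa\log r$. Hence the entries of $\bQ$ are not bounded by $C^{\kappa}(M\delta)^{-(\kappa-1)}/\operatorname{dist}_M(n,x_k)$, and your final Schur or Hilbert step has no uniform weights to work with.

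What is missing is exact first-moment cancellation inside every block. That is the real purpose of the paper's compensation points. Yours only restore the count, which replacing $|\mathcal C_q|$ integers by $|\mathcal C_q|$ nodes already does.

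In Lemma \ref{L31} the comparison grid is moved slightly off the integers. Next to each cluster, $k$ further nodes $\gamma_i$ are shifted by a common $\theta$ to $c_i=\gamma_i+\theta$ so that \eqref{ddengshi} holds; in Lemma \ref{L32} this becomes \eqref{eq:L32-balance}, i.e. zero net displacement per block. A far block then contributes only the second-order quantity $|\mathcal D_q(\Phi_a)|\le 2k_q^2\operatorname{osc}_I(\Phi_a')$ of \eqref{eq:L32-balanced-estimate}. These contributions sum to $O(\kappa)$ by \eqref{eq:L32-far-sum}. The gap condition is used only to keep neighbouring windows disjoint and to bound the near blocks factor by factor. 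Because the cancellation is done block by block, the separation requirement stays local.

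A smaller point: even with bounded products, a Schur test on a kernel of size $1/\operatorname{dist}_M(n,x_k)$ loses a factor $\log s$. Avoiding that loss requires the exact Cauchy structure $\ell_k(n)=a_k b_n/\sin(\pi(n-x_k)/M)$ together with a Hilbert-type inequality between the integer rows and the clustered set $\X$. That inequality must also cope with cluster points sitting very close to their own removed integers. The paper avoids this by comparing with the Kadec-stable system $\Lambda_1$ (Lemma \ref{27}). It then controls the remaining Hadamard-product corrections through Lemma \ref{LLLL27} and Lemmas \ref{Lemmarcsingularvalue} and \ref{ExampleCnorm}.
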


For separated clusters, the case  \(\kappa=1\) is covered by the nearly optimal results in \cite{aubel2019vandermonde,Moitra2015}, so the assumption
\(\kappa\geq 2\) is natural. Table \ref{tab:comparison-separated-clumps} summarizes representative bounds from the literature.
As shown in \cite{li2025new}, all estimates displayed in Table \ref{tab:comparison-separated-clumps} have the optimal dependence on \(M\) and \(\delta\), but their principal distinction lies in the separability assumptions. Our separation requirement is
weaker than those in previous works. More importantly, our separation condition is {\em{local}}. To the best of our knowledge, this is the first smallest singular value bound for multi-cluster nonuniform Fourier matrices whose separation condition is pair-adaptive. That is, the required gap between two neighboring clusters depends only on the cardinalities of these two clusters, rather than on the total number of nodes, the largest cluster size in the entire configuration, or a uniform global separation parameter.

\begin{table}[htbp]
\centering
\caption{Comparison of existing lower bounds for separated clusters.}
\label{tab:comparison-separated-clumps}
\renewcommand{\arraystretch}{1}
\resizebox{\linewidth}{!}{
\begin{tabular}{llll}
\hline
\textbf{References} & \textbf{Separation requirement} & \textbf{Lower-bound order} & {\bf Local/Nonlocal}\\ \hline
Li-Liao \cite{li2021stable} 
&
 $\beta\gtrsim M(M-1)^{-3/2}s \kappa^{5/2}/\sqrt{\sigma}$
&
$\sqrt M (C_1M\delta)^{\kappa-1}$ &Nonlocal\\ 
Batenkov-Goldman \cite{batenkov2021single}
&
 $\beta\gtrsim_{\kappa} s$
&
$\sqrt M (C_2M\delta)^{\kappa-1}$&Nonlocal\\
Li \cite{li2025multiscale}
&
$\beta\ge 3\kappa$
&
$\sqrt{M/s}(C_3M\delta)^{\kappa-1}$&Nonlocal\\
{\textbf{This paper}}
&
$\beta_{p,p+1}\ge (|\mathcal{C}_p|+|\mathcal{C}_{p+1}|+2)$
&
$\sqrt{M}(C_4M\delta)^{\kappa-1}$&Local\\ \hline
\end{tabular}}
\end{table}

\textbf{Perturbations of an equispaced grid.} We next consider the square case \(\X := \{x_k\in\mathbb{R}\mid  k\in\mathbb{Z}_M\}\) with 
\[
\max_{k\in\mathbb{Z}_M} |x_k - k| =: L < \frac{1}{4},
\]
which has been studied in the literature \cite{asipchuk2025concerning,yu2023on}. The estimation of the singular values of the matrix \(A_{\X}\) in this setting is equivalent to the Marcinkiewicz–Zygmund inequality for trigonometric polynomials in the $L^2$ sense. Several classic studies \cite{ChuiShenZhong1993,chui1999polynomial,MarzoSeip2009} show that the threshold \(1/4\) ensures that the smallest singular value is bounded below by a constant independent of \(M\).  This statement can be viewed as a finite version of Kadec's $1/4$ theorem \cite{Kad64,young2001}.
Asipchuk et al. \cite{asipchuk2025concerning} estimated the singular values of nonuniform Fourier matrices in higher-dimensional settings under perturbations smaller than 1/4.
By contrast, the behavior of the condition number of the matrix $A_{\X}$ for the perturbation parameter $L\ge 1/4$  remains an open problem.

Austin and Trefethen \cite{austin2017} noted that although Kadec's 1/4 theorem in sampling theory suggests that the perturbation bound should not exceed 1/4, this threshold ceases to be critical for trigonometric polynomial approximation when the target function possesses a certain degree of smoothness. For any positive real number \(a\), write \(a = v + \gamma\), where \(v\) is a nonnegative integer and \(\gamma \in (0,1]\). We say that \(f\) has \(a\) derivatives if \(f\) is \(v\) times continuously differentiable and its \(v\)-th derivative is H\"{o}lder continuous with exponent \(\gamma\).  Austin and Trefethen established the following result: 

\begin{theoremA}\cite[Theorem 1]{austin2017}
For any \(L \in (0,\frac{1}{2})\) and the points $\tilde{x}_k=(k+s_k)\frac{2\pi}{2N+1}$ with $|s_k|\le L$ for all $|k|\le N$. If \(f\) has \(a > 4L\) derivatives, then
\[
|I - \tilde{I}_N|\lesssim N^{4L- a}\mbox{ and } \sup_{x\in[-\pi,\pi]}| f(x) - \tilde{t}_N(x)| \lesssim N^{4L- a},
\]
where $\tilde{t}_N(x)=\sum_{|k|\le N}c_ke^{ik x}$ denotes the unique degree $N$ trigonometric interpolant such that $\tilde{t}_N(\tilde{x}_k)=f(\tilde{x}_k)$ for all $|k|\le N$ and $ \tilde{I}_N=\int_{-\pi}^{\pi}\tilde{t}_N(x)dx$ is the corresponding quadrature approximation to $I=\int_{-\pi}^{\pi}f(x)dx$.
\end{theoremA}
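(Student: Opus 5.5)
The plan is to split the error through a best approximation and reduce everything to a Lebesgue-constant bound for the perturbed nodes. Let $p_N$ be the best uniform approximation to $f$ by trigonometric polynomials of degree $N$. By Jackson's theorem for functions with $a$ derivatives (H\"older class of order $a$), $\|f-p_N\|_\infty\lesssim N^{-a}$. Interpolation at the $2N+1$ nodes $\tilde x_k$ reproduces $p_N$. Hence, with $\tilde\ell_k$ the periodic Lagrange functions of the nodes $\tilde x_k$, we have
\[
f-\tilde t_N=(f-p_N)-\sum_{|k|\le N}(f-p_N)(\tilde x_k)\,\tilde\ell_k .
\]
This gives $\|f-\tilde t_N\|_\infty\le(1+\Lambda_N)\|f-p_N\|_\infty$ with $\Lambda_N:=\sup_x\sum_k|\tilde\ell_k(x)|$. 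The quadrature estimate follows at once, since $|I-\tilde I_N|\le 2\pi\|f-\tilde t_N\|_\infty$. Both claims therefore reduce to proving the Lebesgue-constant bound
\[
\Lambda_N\lesssim N^{4L}.
\]

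To estimate $\tilde\ell_k$, I will use its product form
\[
\tilde\ell_k(x)=\prod_{j\neq k}\frac{\sin((x-\tilde x_j)/2)}{\sin((\tilde x_k-\tilde x_j)/2)}
\]
and compare it with the equispaced Lagrange function $\ell_k$ built on $x_j=2\pi j/(2N+1)$. Write $x$ near the grid index $m$. Then $\log|\tilde\ell_k(x)/\ell_k(x)|$ splits into two sums. The first is $\sum_j\log|\sin((x-\tilde x_j)/2)/\sin((x-x_j)/2)|$. The second is the analogous sum at the point $\tilde x_k$ against $x_k$, including the shift of $x_k$ itself. A shift of size at most $L\cdot 2\pi/(2N+1)$ changes each factor by $\log(1+O(L/d))$, where $d$ is the index distance. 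A crude bound gives each sum at most $2L\log N+O(1)$, so $|\tilde\ell_k(x)|\lesssim N^{4L}/(1+|m-k|)$. Summing over $k$ would lose a factor $\log N$.

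The main obstacle is removing this logarithm. I expect the fix to be a refined comparison in which the two sums are paired index by index. For $j$ far from both $m$ and $k$, the terms $\log\frac{x-\tilde x_j}{x-x_j}$ and $\log\frac{\tilde x_k-\tilde x_j}{x_k-x_j}$ nearly cancel. Their difference is of size $L\,|m-k|/(|j-m|\,|j-k|)$ rather than $L/|j-m|+L/|j-k|$. Summing, only indices $j$ within distance about $|m-k|$ of $m$ or $k$ contribute logarithmically, giving at most $4L\log(1+|m-k|)+O(1)$ overall. This should yield
\[
|\tilde\ell_k(x)|\lesssim (1+|m-k|)^{4L-1}.
\]
The small denominators at $j=m$ and at $j$ adjacent to $k$ must be handled separately, using $L<1/2$ so that the nodes stay separated by at least $(1-2L)\,2\pi/(2N+1)$.

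Since $4L-1>-1$, summing over $k$ gives $\Lambda_N\lesssim\sum_{n\le N}n^{4L-1}\lesssim N^{4L}$ for $L>0$. Combined with Jackson's bound this gives $N^{4L-a}$ for both the uniform interpolation error and the quadrature error. The hypothesis $a>4L$ only serves to make these bounds decay.
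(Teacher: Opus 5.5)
The paper does not prove this statement. Theorem A is quoted from Austin and Trefethen as background, so there is no in-paper argument to compare yours with step by step. On its own terms, your outline is the standard route and is sound in substance.

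\begin{itemize}
\item Jackson's bound gives $E_N(f)\lesssim N^{-a}$.
\item Interpolation reproduces degree-$N$ polynomials exactly.
\item Hence $\|f-\tilde t_N\|_\infty\le(1+\Lambda_N)E_N(f)$ and $|I-\tilde I_N|\le 2\pi\|f-\tilde t_N\|_\infty$.
\item Finally, $\Lambda_N\lesssim_L N^{4L}$ follows from the pointwise estimate $|\tilde\ell_k(x)|\lesssim_L(1+D)^{4L-1}$, where $D$ is the circular index distance from $x$ to $x_k$.
\end{itemize}

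Your account of where $4L$ comes from is correct. On the arc between $k$ and $m$ the two logarithmic sums reinforce and give $2L\log D$. Off that arc they cancel to order $LD/(|j-m|\,|j-k|)$, which sums to another $2L\log D$; on the circle, use $\cot a-\cot b=\sin(b-a)/(\sin a\sin b)$.

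The exponent $4L-1$ is in fact sharp. By \eqref{Bfactorization}, $|\sum_n\psi^*_{Nn}(0)|$ is the modulus of the Lagrange function of the node $\lambda_N$ evaluated at the nearly antipodal grid point $0$. For the configuration \eqref{eqspecifiedLambda}, Lemma \ref{445} shows this is $\gtrsim M^{4L-1}/(1-2L)$.

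\textbf{One step needs repair as written.} Put $h=2\pi/(2N+1)$. When $s_k\neq 0$, the first-order expansion of $\log\frac{x-\tilde x_j}{x-x_j}-\log\frac{\tilde x_k-\tilde x_j}{x_k-x_j}$ is $s_jh\bigl(\frac{1}{x_k-x_j}-\frac{1}{x-x_j}\bigr)-\frac{s_kh}{x_k-x_j}$. The last term has size $L/|j-k|$, not $L|m-k|/(|j-m|\,|j-k|)$. Summing it in absolute value costs $2L\log N$, i.e.\ a spurious factor $N^{2L}$.

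The fix has two parts.
\begin{itemize}
\item Pair with $\log\frac{\tilde x_k-\tilde x_j}{\tilde x_k-x_j}$ instead. For this pairing your size estimate is correct.
\item Treat the displacement of $x_k$ exactly. For the equispaced grid with $M=2N+1$, $\prod_{j}|\sin((y-x_j)/2)|=2^{1-M}|\sin(My/2)|$. Hence $\prod_{j\neq k}|\sin((\tilde x_k-x_j)/2)|/|\sin((x_k-x_j)/2)|=|\sin(\pi s_k)|/(M|\sin(\pi s_k/M)|)$, which lies in $[2/\pi,1]$. Equivalently, the linear $s_k$-term vanishes because $\sum_{l=1}^{M-1}\cot(\pi l/M)=0$.
\end{itemize}

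\textbf{Comparison with the paper.} The closest in-paper argument is the proof of Corollary \ref{coro}. It uses the same Jackson-plus-stability template. However, its stability constant is the $\ell^2$ quantity $\|\bA_L^{-1}\|_2$ from Theorem \ref{Theorem}, not a sup-norm Lebesgue constant built from pointwise Lagrange bounds. That quantity comes from the periodic nonuniform sampling reduction of Lemma \ref{LemmaBsigmaL} and a comparison with a Kadec-stable node set.

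That route needs heavier machinery, loses a $\log M$, and only covers $1/4\le L<1/2$. In return it gains a full power of $N$: the $L^2$ interpolation error and the quadrature error are $N^{4L-a-1}\log M/(1-2L)$. So quadrature converges once $a>4L-1$. Combined with $\|q\|_\infty\le\sqrt{2N+1}\,\|(a_j)\|_2$, it even yields the uniform rate $N^{4L-a-1/2}\log M/(1-2L)$, which beats Theorem A in that range.

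Your route is elementary and works for every $L\in(0,1/2)$ without logarithmic loss. It also yields the distance-dependent pointwise bound on each $\tilde\ell_k$, which the $\ell^2$ method does not see.
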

Austin and Trefethen \cite{austin2017} conjectured that if the deviation of the sampling points from an equispaced grid is bounded by \(L < 1/2\), then, provided that the periodic function is more than \(2L\)-differentiable but not necessarily more than \(4L\)-differentiable, the trigonometric interpolant and quadrature still converge.  For the $2$-norm Lebesgue constant $\Gamma_{2,N}$ defined below, Austin and Trefethen also conjectured that $\Gamma_{2,N}\lesssim N^{4L-1}$ (see \cite[page 2119]{austin2017} or \cite[Conjecture 3.10]{Austin2016}).

\begin{definition}\label{def}\cite[Subsection 3.4.1]{Austin2016} The $2$-norm Lebesgue constant $\Gamma_{2,N}$ with respect to a set $\{\tilde{x}_k\mid  |k|\le N\}$ of $2N+1$ sampling points for $f:[-\pi,\pi]\to\mathbb{R}$ is defined as:
\[
\Gamma_{2,N}:=\sup_{\|{\boldsymbol f}\|_{\ell_{2,N}}\le1}\|\tilde{t}_N\|_{L^{2}[-\pi,\pi]},
\]
where $\tilde{t}_N(x):=\sum_{|k|\le N}c_ke^{ik x}$ denotes the unique degree $N$ trigonometric interpolant such that $\tilde{t}_N(\tilde{x}_k)=f(\tilde{x}_k)$ for all $|k|\le N$, ${\boldsymbol f}:=(f(x_k): |k|\le N)\in\mathbb{R}^{2N+1}$, and the normalized Euclidean norm $\|{\boldsymbol f}\|_{\ell_{2,N}}:=\|{\boldsymbol f}\|_2/\sqrt{2N+1}$.
\end{definition}

Yu and Townsend \cite[Theorem 2.1]{yu2023on} investigated this conjecture and derived an explicit lower bound of the smallest singular value under the restriction $L<1/4$. Moreover, under an additional oversampling assumption, they established that such a uniform lower bound (independent of $M$) persists for all  $1/4 \le  L< 1/2$ \cite[Subsection 6.2]{yu2023on}. So far, it remains unknown whether the conjectures of Austin and Trefethen hold within the new regime $1/4\le L<1/2$,  in contrast to the extensively examined regime $L< 1/4$.

Let $L$ be an arbitrary positive number.
Let $M\ge2$ be a positive integer and let the elements of the set $\Lambda$ satisfy 
\begin{equation}\label{AssumptionLambda}
\min_{\substack{j,k\in\mathbb{Z}_M,j\ne k}}\operatorname{dist}_M(\lambda_j,\lambda_k)\ge\sigma>0\mbox{ and }\max_{k\in\mathbb{Z}_M}\operatorname{dist}_M(\lambda_k,k)<L.
\end{equation}
Define an $M\times M$ Fourier matrix with respect to the set $\Lambda$ satisfying \eqref{AssumptionLambda} as follows:
\begin{equation}\label{Fouriermatrix}
\bA_{\sigma,L} = \Big[ \exp(-2\pi i j \lambda_k/M) : j,k\in{\mathbb Z}_M\Big].
\end{equation}

We present our second main contribution: the estimation of upper and lower bounds for $\bA_{\sigma,L}^{-1}$. The complete proofs are elaborated in Section \ref{Section3} and Appendix D, respectively.

\begin{theorem}\label{Theorem} Let $1/4 \le L<1/2$, and $\bA_{\sigma,L}$ be defined as in \eqref{Fouriermatrix}. Then,
\[
\|\bA_{\sigma,L}^{-1}\|_{2}\lesssim \frac{M^{4L-\frac{3}{2}}\log M}{1-2L}.
\]
\end{theorem}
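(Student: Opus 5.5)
We will bound $\|\bA_{\sigma,L}^{-1}\|_2$ through the periodic Lagrange interpolation representation of the inverse, as described in the introduction. Write $\lambda_k=k+\epsilon_k$ with $|\epsilon_k|<L$. Since $\bA_{\sigma,L}$ is a square Vandermonde-type matrix in the variables $z_k=e^{-2\pi i\lambda_k/M}$, the rows of its inverse are the coefficient vectors of the Lagrange polynomials $\ell_k(z)=\prod_{m\ne k}(z-z_m)/(z_k-z_m)$. After multiplying by the appropriate power $z^{-(M-1)/2}$, these become periodic nonuniform interpolation functions $\ell_k(x)$ in the variable $x\in\mathbb R/M\mathbb Z$, with $\ell_k(\lambda_m)=\delta_{km}$. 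By Parseval on the grid $\mathbb Z_M$, $\|\bA_{\sigma,L}^{-1}\|_2=M^{-1/2}\|\bB\|_2$, where $\bB=[\ell_k(j):j,k\in\mathbb Z_M]$ is the periodic nonuniform interpolation matrix that samples the Lagrange functions at the equispaced integers. It therefore suffices to prove
\[
\|\bB\|_2\lesssim M^{4L-1}\log M/(1-2L).
\]

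The second step is an explicit pointwise estimate of $\ell_k(j)$. Up to a unimodular phase,
\[
\ell_k(x)=\prod_{m\ne k}\frac{\sin(\pi(x-\lambda_m)/M)}{\sin(\pi(\lambda_k-\lambda_m)/M)}.
\]
Compare this with the equispaced product $\prod_{m\ne k}\sin(\pi(x-m)/M)/\sin(\pi(k-m)/M)$, which is the Dirichlet-kernel interpolant of size $\asymp 1/(1+|j-k|)$ at $x=j$. Taking logarithms, the ratio is a sum of terms of the form $\log\bigl(1-\epsilon_m\pi\cot(\pi(j-m)/M)/M+\dots\bigr)$ and the analogous terms with $k$. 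Each cotangent term behaves like $1/(j-m)$, so with $|\epsilon_m|<L$ the sums are at most $L\sum_{m}1/|j-m|\approx 2L\log M$ for the $j$-factor and the same for the $k$-factor. This gives
\[
|\ell_k(j)|\lesssim \frac{M^{4L}}{(1+|j-k|)\,(1+\dist_M(j,\lambda_k))^{?}}\cdot\frac{1}{(1+\dist_M(j,k))^{4L}},
\]
or more precisely $|\ell_k(j)|\lesssim(M/(1+|j-k|))^{4L}/(1+|j-k|)$ up to the factor $|\sin(\pi(j-\lambda_j)/M)|$ from the nearby node. The terms $m$ with $|j-m|\le1$ must be handled separately. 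This is where $1-2L$ enters: the factor $\sin(\pi(j-\lambda_j)/M)$ is $\ge \pi(1-2L)/M$ relative to $\sin(\pi(\lambda_k-\lambda_j)/M)$ for neighbouring indices, so the diagonal and near-diagonal entries are bounded by $1/(1-2L)$. This pointwise kernel estimate is the main obstacle. The exponent $4L$ must come out sharp (the product of two $M^{2L}$ factors), not $M^{4L}$ times extra logs, and the near-node sine factors must be controlled uniformly using only $L<1/2$. I would first try Kadec-style pairing $\log|\sin(\pi(a+\epsilon)/M)/\sin(\pi a/M)|$ with summation by parts over $m$.

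Finally, bound $\|\bB\|_2$ by Schur's test. If $|\bB_{jk}|\lesssim (1-2L)^{-1}M^{4L-1}\,(1+|j-k|)^{-1}$ uniformly, i.e. the kernel is dominated by $M^{4L-1}$ times a Hilbert-type kernel, then the row and column sums are each $\lesssim (1-2L)^{-1}M^{4L-1}\sum_{d\le M}1/(1+d)\lesssim (1-2L)^{-1}M^{4L-1}\log M$. Combining this with $\|\bA_{\sigma,L}^{-1}\|_2=M^{-1/2}\|\bB\|_2$ yields the claimed $M^{4L-3/2}\log M/(1-2L)$. If the pointwise estimate in the second step only gives the form $(M/d)^{4L}/d$ with $d=1+|j-k|$, the Schur sums are $M^{4L}\sum_d d^{-1-4L}\lesssim M^{4L}$, which is worse. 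In that case I would replace it by the uniform bound $M^{4L-1}/d$, obtained by splitting each product symmetrically around $j$ and $k$ so that the logarithmic growth is measured against $M$ rather than against $d$.
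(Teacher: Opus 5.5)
Your first step is correct: up to unimodular diagonal factors, $[\ell_k(j)]$ is the paper's $\bB_{\sigma,L}$, and $\|\bA_{\sigma,L}^{-1}\|_2=M^{-1/2}\|\bB\|_2$ is Lemma \ref{LemmaBsigmaL}. The gap is in steps 2--3. The kernel bound $|\ell_k(j)|\lesssim (1-2L)^{-1}M^{4L-1}(1+|j-k|)^{-1}$ is false, and Schur's test with absolute row and column sums cannot reach the stated bound.

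To see this, take the extremal configuration \eqref{eqspecifiedLambda} with $M=2N+1$. Since
\[
|\ell_N(0)|=\frac{\pi}{M}\,\frac{|F_\Lambda(0)|}{|F_\Lambda'(\lambda_N)|\,|\sin(\pi\lambda_N/M)|},
\]
estimate \eqref{Lowerboundeq3} gives $|\ell_N(0)|\gtrsim M^{4L-1}/(1-2L)$. Here $|N-0|\approx M/2$, so your bound would predict $M^{4L-2}/(1-2L)$.

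The same factorization $F_\Lambda=AH$ as in the proof of Lemma \ref{445} shows that $|H(t)|\gtrsim (N/t)^{2L}$ for $1\le t\le N/2$. Hence $|\ell_N(t)|\gtrsim \frac{M^{4L-1}}{1-2L}\,t^{-2L}$, and $\sum_{j\in\mathbb Z_M}|\ell_N(j)|\gtrsim M^{2L}/(1-2L)$. This exceeds the target $M^{4L-1}\log M/(1-2L)$ by a factor $M^{1-2L}/\log M$.

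So the entries need not decay away from the diagonal. Your log-sum heuristic, which charges $M^{2L}$ to each of the two products, is essentially sharp pointwise. The missing factor $M^{-1}$ can only come from cancellation inside $\bB$, and any argument that passes to $|\bB_{jk}|$ and sums rows or columns throws that cancellation away.

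The idea you are missing is to pay pointwise only for the part of the deviation exceeding $1/4$, and to absorb the rest with Kadec's theorem:

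1. Lemma \ref{lem11} chooses $\Lambda_1$ with $|v_k-k|\le\frac14-\epsilon$ and $|\lambda_k-v_k|\le L-\frac14+\epsilon$.
2. Then \eqref{psimnrewrite} factors $\psi^*_{mn}(j)$ as $V(j)\cdot\frac{F'_{\Lambda_1}(v_m)}{F'_\Lambda(\lambda_m)}\cdot\frac{j-v_m-nM}{j-\lambda_m-nM}\cdot\phi_{mn}(j)$.
3. The matrix $[\phi_{mn}(j)]$ comes from a Kadec-stable configuration. Lemma \ref{27} bounds its spectral norm by $1/(4\epsilon)$ through the Riesz-basis inequality, and this is exactly where the cancellation is used.
4. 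The factors $\frac{j-v_m-nM}{j-\lambda_m-nM}$ are treated as Hadamard multipliers via Lemma \ref{Lemmarcsingularvalue}.
5. Only $V$, $V_m$ and the derivative ratio are bounded pointwise, each by $M^{2L-\frac12+2\epsilon}$ (Lemmas \ref{L23} and \ref{L25}). The derivative ratio carries an extra factor $\lesssim(1-2L)^{-1}$.

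This gives the exponent $4L-1+4\epsilon$ instead of your $4L$. The $\log M$ then comes from $1/\epsilon$ with $\epsilon\asymp 1/\log M$, not from a harmonic sum.
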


\begin{theorem}\label{Thmlowerbound}  Let $1/4 \le L<1/2$, \(\Lambda=\{\lambda_k\in[-1/2,M-1/2]\mid  k\in\mathbb{Z}_M\}\) with 
\begin{equation}\label{eqspecifiedLambda}
\lambda_k=
\begin{cases}
k+L, &\mbox{ if } 0\le k\le N,\\
k-L, & \mbox{ if }N<k\le 2N,
\end{cases}
\end{equation}
and $\bA_{\sigma,L}$ be defined as in \eqref{Fouriermatrix}. Then,
\[
\|\bA_{\sigma,L}^{-1}\|_{2}\gtrsim \frac{M^{4L-\frac{3}{2}}}{1-2L}.
\]
\end{theorem}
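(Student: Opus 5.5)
The plan is to bound $\|\bA_{\sigma,L}^{-1}\|_2$ from below by the norm of a single row of $\bA_{\sigma,L}^{-1}$, and to read that row as the coefficient vector of a periodic Lagrange function. Throughout, $M=2N+1$. Since $\sum_j(\bA_{\sigma,L}^{-1})_{k,j}e^{-2\pi i j\lambda_m/M}=\delta_{km}$, the $k$-th row of $\bA_{\sigma,L}^{-1}$ is the coefficient vector of the unique $\ell_k(x)=\sum_{j\in\mathbb Z_M}c_{k,j}e^{-2\pi i jx/M}$ with $\ell_k(\lambda_m)=\delta_{km}$. The exponentials $e^{-2\pi i jx/M}$, $j\in\mathbb Z_M$, are orthogonal on $[0,M)$ with squared norm $M$, so Parseval gives
\[
\|\bA_{\sigma,L}^{-1}\|_2\ \ge\ \|(c_{k,j})_j\|_2=\Big(\frac1M\int_0^M|\ell_k(x)|^2\,dx\Big)^{1/2}.
\]
It therefore suffices to exhibit an index $k$ and a set $E\subset[0,M)$ of measure $\gtrsim1$ on which $|\ell_k|\gtrsim M^{4L-1}/(1-2L)$. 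That yields $\|\bA_{\sigma,L}^{-1}\|_2\gtrsim M^{-1/2}M^{4L-1}/(1-2L)$, which is the claim.

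Next I read off the geometry of \eqref{eqspecifiedLambda}. The nodes are compressed at the middle: $\lambda_{N+1}-\lambda_N=1-2L$. They are stretched at the wrap-around point: $\lambda_0+M-\lambda_{2N}=1+2L$. I take $k=N$, the node adjacent to the short gap. I use the explicit product formula
\[
|\ell_N(x)|=\prod_{m\ne N}\frac{|\sin(\pi(x-\lambda_m)/M)|}{|\sin(\pi(\lambda_N-\lambda_m)/M)|};
\]
the unimodular phase needed to make $\ell_N$ a polynomial in $e^{-2\pi ix/M}$ with exponents $0,\dots,M-1$ does not affect the modulus. I choose $E$ to be an interval of length about $1/2$ inside the big gap $(\lambda_{2N}-M,\lambda_0)$, away from both endpoints, say around $x\approx 0$.

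The denominator contains the factor $|\sin(\pi(1-2L)/M)|\asymp(1-2L)/M$, which supplies the $1/(1-2L)$. All remaining factors I compare with the equispaced configuration $\lambda_m=m$, for which $\prod_{m}\sin(\pi(x-m)/M)$ equals $\sin(\pi x)$ up to a constant.

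The main step, and the expected obstacle, is estimating the two log-ratios
\[
\sum_{m}\log\Big|\frac{\sin(\pi(x-\lambda_m)/M)}{\sin(\pi(x-m)/M)}\Big|
\qquad\text{and}\qquad
\sum_{m\ne N}\log\Big|\frac{\sin(\pi(\lambda_N-\lambda_m)/M)}{\sin(\pi(N-m)/M)}\Big|.
\]
Each term is $\mp\frac{\pi L}{M}\cot(\pi(\cdot)/M)+O(L^2/(\cdot)^2)$. I will sum separately over $0\le m\le N$ (shift $+L$) and $N<m\le 2N$ (shift $-L$), and use $\sum_{1\le d\le N}1/d=\log N+O(1)$.

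For $x$ near $0$, the nodes on either side move away from $x$, and the numerator ratio contributes roughly $M^{2L}$. For $\lambda_N$, the nodes on either side move towards it, so the denominator ratio is roughly $M^{-2L}$, and its reciprocal gives another $M^{2L}$. Combined with the $M^{-1}$ from $1/\sin(\pi(1-2L)/M)$, this should give $|\ell_N(x)|\asymp M^{4L-1}/(1-2L)$ on $E$.

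Care is needed with the terms where $m$ is close to $0$, $2N$ or $N$. There the linearization fails and the factors must be bounded directly, which costs only constants depending on $L\in[1/4,1/2)$. The summed $O(1/d^2)$ errors are also $O(1)$, so no extra powers of $M$ appear. Inserting the resulting bound into the Parseval inequality finishes the proof.
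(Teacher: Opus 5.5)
Your argument is correct. It reaches the paper's key estimate by a shorter, purely finite-dimensional route.

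\textbf{How the two routes relate.} The paper passes through Lemma~\ref{LemmaBsigmaL} and bounds $\|\bA_{\sigma,L}^{-1}\|_2\ge M^{-1/2}|(\bB_{\sigma,L})_{N,0}|=M^{-1/2}|\sum_{n}\psi^*_{Nn}(0)|$. In Lemma~\ref{445} it resums the $n$-series with \eqref{Fourierexpansion}. The result has modulus $\frac{\pi}{M}\frac{|F_\Lambda(0)|}{|F_\Lambda'(\lambda_N)|\,|\sin(\pi\lambda_N/M)|}$, which is exactly your $|\ell_N(0)|$. Indeed, column $0$ of $\widetilde{\bD}$ consists of ones, so \eqref{Bfactorization} gives $|(\bB_{\sigma,L})_{N,0}|=|\sum_j(\bA_{\sigma,L}^{-1})_{N,j}|=|\ell_N(0)|$. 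The paper's bound is therefore the Cauchy--Schwarz inequality $|\ell_N(0)|\le\sqrt{M}\,\|(c_{N,j})_j\|_2$ in disguise, a one-point version of your Parseval step.

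\textbf{What each route buys.} Your route avoids sine-type functions, Riesz bases, the pointwise convergence of \eqref{gs0pre}, and the cotangent Fourier series altogether. The paper's route shows that the extremal quantity is an entry of the same matrix $\bB_{\sigma,L}$ whose norm is bounded above in Theorem~\ref{Theorem}. That is what exhibits the near-sharpness of its framework. For the product itself, the paper compares with the shifted grid $\{j+L\}$, for which $\prod_{j}\sin(\pi(x-j-L)/M)=\pm2^{1-M}\sin\pi(x-L)$ exactly. Only a one-sided product over $N<j\le 2N$ then remains, evaluated via Gamma functions and Gautschi's inequality. Your comparison with $\{m\}$ works just as well. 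You could also drop the interval $E$ and evaluate at a single point (e.g.\ $x=-1/2$) combined with Cauchy--Schwarz.

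\textbf{Bookkeeping slips to fix when you write it out.}

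First, $1/\sin(\pi(1-2L)/M)\asymp M/(1-2L)$ is a factor $M^{+1}$, not $M^{-1}$. The $M^{-1}$ comes from the grid normalizations $|\prod_{m}\sin(\pi(x-m)/M)|=2^{1-M}|\sin\pi x|$ versus $\prod_{k=1}^{M-1}\sin(\pi k/M)=M2^{1-M}$. Equivalently, the equispaced Lagrange function satisfies $|\ell_N^{\mathrm{eq}}(x)|\asymp 1/M$ on $E$. Relative to its grid counterpart $\sin(\pi/M)$, the short-gap factor contributes only $1/(1-2L)$.

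Second, seen from $\lambda_N$ the displacement is one-sided, not ``on either side''. For $m<N$ you have $\lambda_N-\lambda_m=N-m$ exactly, while $\lambda_{N+r}-\lambda_N=r-2L$ for $1\le r\le N$. Hence the denominator ratio is $\prod_{r=1}^{N}\frac{\sin(\pi(r-2L)/M)}{\sin(\pi r/M)}\asymp\frac{\Gamma(N+1-2L)}{\Gamma(1-2L)\Gamma(N+1)}\asymp(1-2L)N^{-2L}$. Its log-terms are $0$ and $\approx-\frac{2\pi L}{M}\cot(\pi r/M)$, rather than $\mp\frac{\pi L}{M}\cot$. The total exponent $-2L\log N$ is as you said.

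Third, once the $r=1$ factor is extracted, the remaining factors have $r-2L\ge 2-2L>1$. So all constants are uniform in $L$, which the notation $\gtrsim$ requires.

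Finally, center $E$ in the middle of the gap $(-1-L,L)$, e.g.\ $E=[-3/4,-1/4]$, rather than around $0$. This is because $\lambda_0=L$ may lie at distance only $1/4$ from $0$.
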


The upper bound established in Theorem \ref{Theorem} is nearly optimal with respect to both \(M\)  and \(1-2L\), where the term \(1-2L\) corresponds to the minimal separation between distinct frequency points. Recalling Definition \ref{def} regarding the $2$-norm Lebesgue constant, we immediately obtain
\begin{equation}\label{Gamma2N}
\Gamma_{2,N}=\sup_{\|{\boldsymbol f}\|_{\ell_{2,N}}\le1}\|\bc\|_2=\sup_{\|{\boldsymbol f}\|_{\ell_{2,N}}\le1}\|(\bA^{-1}_L)^*{\boldsymbol f}\|_2=\sqrt{2N+1}\|\bA_{L}^{-1}\|_2,
\end{equation}
where $\bc:=(c_k:|k|\le N)$, the matrix $\bA_L$ is specified by
\begin{equation}\label{DefAL}
\bA_L:=[\exp(-i j\tilde{x}_k):-N\le k,j\le N],
\end{equation}
and perturbed grid points $\tilde{x}_k$ take the form $\tilde{x}_k:=(k+s_k)\frac{2\pi }{2N+1}$ with $|s_k|\le L$ for all $|k|\le N$. 
Let \(M=2N+1\) and \(\lambda_{N+k}:=N+k+s_k\) for \(|k|\le N\). Then
\[
\frac{2\pi \lambda_{k+N}}{M}=\frac{2\pi \lambda_{k+N}}{2N+1}=\frac{2\pi (N+k+s_k)}{2N+1}= \frac{2\pi N}{2N+1}+\tilde{x}_k.
\]
Let $\bP_1={\textrm{diag}}(\exp(-2\pi ijN/M): |j|\le N)$ and $\bP_2={\textrm{diag}}(
\exp(-2\pi iN(N+k+s_k)/M):|k|\le N)$. Then
\[\Big[ \exp\Big(\frac{-2\pi i (j+N) \lambda_{N+k}}{M}\Big) : |j|,|k|\le N\Big]=\bP_1\bA_{L}\bP_2, \]
and, further, $\|\bA_{L}^{-1}\|_{2}=\|\bA_{\sigma,L}^{-1}\|_{2}$. 
Consequently, Theorem \ref{Theorem} delivers the upper bound
\begin{equation}\label{AL}
\|\bA_{L}^{-1}\|_{2}=\|\bA_{\sigma,L}^{-1}\|_{2}\lesssim \frac{N^{4L-\frac{3}{2}}\log M}{1-2L}.
\end{equation}
Combining \eqref{Gamma2N} and \eqref{AL} yields
\begin{equation}\label{upperboundeq}
\Gamma_{2,N}=\sqrt{2N+1}\|\bA_{L}^{-1}\|_2=\sqrt{2N+1}\|\bA_{\sigma,L}^{-1}\|_2\lesssim \frac{N^{4L-1}\log M}{1-2L}.
\end{equation}
This result nearly confirms the conjecture put forward by Austin and Trefethen \cite[Conjecture 3.10]{Austin2016}, \cite[page 2119]{austin2017} regarding the upper bound of the \(2\)-norm Lebesgue constant $\Gamma_{2,N}\lesssim N^{4L-1}$ over the parameter regime \(1/4 \le L < 1/2\). 
Combining Theorem \ref{Thmlowerbound} and \eqref{Gamma2N}, there exists a  set \(\{\tilde{x}_k\mid  |k|\le N\}\subset[-\pi,\pi]\) satisfying $s_k=L$ for any $-N\le k\le 0$ and $s_k=-L$ for any $0<k\le N$ associated with the prescribed frequency set $\Lambda$ specified in \eqref{eqspecifiedLambda} such that   
\[
\Gamma_{2,N}=\sqrt{2N+1}\|\bA_{L}^{-1}\|_2= \sqrt{2N+1}\|\bA_{\sigma,L}^{-1}\|_2 \gtrsim  \frac{N^{4L-1}}{1-2L}. 
\]
The upper estimate \eqref{upperboundeq} and the matching lower bound demonstrate that the bound for \(\Gamma_{2,N}\) derived herein is nearly tight. Consequently, our theoretical analysis nearly confirms that these points $\lambda_k$, defined as in \eqref{eqspecifiedLambda}, are the maximizers stated in Conjecture 3.3 of \cite{Austin2016}.

Theorem \ref{Theorem} yields the following corollary, which also provides an affirmative answer to the conjecture of Austin and Trefethen \cite[Page 2115]{austin2017} on trigonometric quadrature (since $4L- 1<2L$ for any $1/4\le L<1/2$). Its proof is given at the end of Section \ref{Section3}. Our present method does not resolve the conjecture of Austin and Trefethen concerning trigonometric interpolation in the \(L^\infty\) sense. Related \(L^\infty\)-stability results for suitably perturbed Chebyshev-Lobatto points were recently obtained by Wu \cite{Wu2026PerturbedChebyshev}.

\begin{corollary}\label{coro} All notation are the same as those in Theorem A. Then
\[
 \|f- \tilde{t}_N\|_{L^{2}([-\pi,\pi])}\lesssim \frac{N^{4L- a-1}\log M}{1-2L} ,
 \mbox{ and }
 |I - \tilde{I}_N|\lesssim \frac{N^{4L- a-1}\log M}{1-2L}.
\]
\end{corollary}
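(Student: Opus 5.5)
The plan is to derive the corollary from Theorem \ref{Theorem} through the identity \eqref{Gamma2N}. Concretely, I will split the interpolation error into a best-approximation error and an interpolation-of-the-residual error. Let $t_N^*$ be a degree $N$ trigonometric polynomial approximating $f$ in the sup norm. Jackson's theorem, in its Hölder version, gives
\[
\|f-t_N^*\|_{L^\infty}\lesssim N^{-a}
\]
whenever $f$ has $a$ derivatives. Interpolation reproduces degree $N$ polynomials, so $\tilde t_N - t_N^*$ is the degree $N$ interpolant of the residual $g:=f-t_N^*$ at the nodes $\tilde x_k$. Writing $\tilde t_N[g]$ for this interpolant, the triangle inequality gives
\[
\|f-\tilde t_N\|_{L^2}\le \|g\|_{L^2}+\|\tilde t_N[g]\|_{L^2}.
\]

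The first term is at most $\sqrt{2\pi}\,\|g\|_{L^\infty}\lesssim N^{-a}$. For the second term I use Definition \ref{def} together with \eqref{upperboundeq}:
\[
\|\tilde t_N[g]\|_{L^2}\le \Gamma_{2,N}\,\|{\boldsymbol g}\|_{\ell_{2,N}}\le \Gamma_{2,N}\,\|g\|_{L^\infty}\lesssim \frac{N^{4L-1}\log M}{1-2L}\,N^{-a}.
\]
Since $4L-1\ge 0$ and $1/(1-2L)\ge1$, this term dominates $N^{-a}$. The $L^2$ estimate therefore becomes $N^{4L-1-a}\log M/(1-2L)$, which is exactly the claimed bound.

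For the quadrature error I use $|I-\tilde I_N|=\big|\int_{-\pi}^{\pi}(f-\tilde t_N)\big|\le \sqrt{2\pi}\,\|f-\tilde t_N\|_{L^2}$ by Cauchy–Schwarz, and the first estimate then gives the second.

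The only delicate point is the Jackson estimate for Hölder classes with $a=v+\gamma$, which is classical and can be quoted. I also need that $\Gamma_{2,N}$ bounds the interpolation operator for arbitrary sample vectors, which is immediate from its definition as a supremum. I do not expect a genuine obstacle; the substantive content is already in Theorem \ref{Theorem}.
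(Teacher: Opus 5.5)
Your proposal is correct and follows essentially the same route as the paper: a Jackson best approximation of degree $N$, bounding the interpolant of the residual through the inverse Fourier matrix, the triangle inequality, and Cauchy--Schwarz for the quadrature error. The only difference is cosmetic: you phrase the middle step through $\Gamma_{2,N}$ and \eqref{upperboundeq}, whereas the paper applies $\|\bA_L^{-1}\|_2$ from \eqref{AL} directly to the sample vector, and these are the same estimate by \eqref{Gamma2N}.
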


Section \ref{Section2} formulates the general interpolation framework. We introduce the periodic nonuniform interpolation matrix $\bB_{\sigma,L}$ (defined in \eqref{DefeqBsigmaL}) and establish an exact equivalence between its spectral norm and the associated restricted inverse norm of the Fourier matrix. Section \ref{Section3} integrates this framework with a near-integer comparison configuration to prove Theorem \ref{Theorem}. Section \ref{Section4} develops the cluster-adapted auxiliary-node construction and furnishes the proof of Theorem \ref{T001}. Section \ref{Section5} concludes the paper with final remarks. Finally, we clarify the role of AI in paper preparation. All proofs pertaining to identity \eqref{gs0pre}, Lemmas \ref{Cnorm}, \ref{ExampleCnorm}, and \ref{L25}, as well as Theorem \ref{Thmlowerbound}, are deferred to the appendix.

\section{Spectral norms of periodic nonuniform interpolation matrices}\label{Section2}

\subsection{Preliminaries}
 
We first introduce the necessary notation and preliminaries.
 Let $\mathbb{N}$ denote the set of positive natural numbers, and let $\mathbb{Z}$ stand for the set of all integers. For any real number $t$, the floor function $\lfloor t\rfloor$ gives the largest integer not larger than $t$, and the ceiling function $\lceil t\rceil$ gives the smallest integer not smaller than $t$. The sum of an infinite series  $\sum_{n\in\mathbb{Z}}c_n$ is defined as  
\[
\sum_{n\in\mathbb{Z}}c_n:=\lim_{N\to\infty}\sum_{|n|\le N}c_n.
\]
For formal definitions and properties of vector and matrix norms, we refer the reader to Chapter 5 in \cite{Horn2012}. We adopt the convention that
\[
\frac{\sin (t)}{t}\Big|_{t=0}:=\lim_{t\to0}\frac{\sin t}{t}=1.
\]
Let $C(\mathbb{R})$, $L^1(\mathbb{R})$, and $L^2(\mathbb{R})$ denote the spaces of continuous functions, Lebesgue integrable functions, and square integrable functions on $\mathbb{R}$, respectively.
The Paley-Wiener space $\mathcal{B}_{\pi}(\mathbb{R})$ \cite[Page 90]{young2001} with bandwidth $\pi$ is defined by 
\[
\mathcal{B}_{\pi}(\mathbb{R}) := \Big\{ f \in C(\mathbb{R}) \cap L^2(\mathbb{R})\mid  {\textrm{supp}}\; \hat{f}\, \subseteq [-\pi,\pi] \Big\},
\]
where the Fourier transform $\hat{f}$ of $f\in L^{1}(\mathbb{R})$ is defined by
\[
\hat{f}(w):=(f){\hat{\,}}:=\frac{1}{\sqrt{2\pi}}\int_{-\infty}^{+\infty} f(x) e^{-iwx}dx,\ w\in{\mathbb R}.
\]

Now, we give a formal definition of the sine-type functions \cite[Lecture 22]{levin1996lectures}. 

\begin{definition}\label{Sinetype}
We say that an entire function $f$ is a sine-type function with bandwidth $\pi$ if it satisfies the following two conditions.
\begin{itemize}
\item[(i)] $\inf_{i\neq j }|\lambda_{i}-\lambda_{j}|\ge\lambda>0$, where $\{\lambda_{i}\}_{i=1}^{\infty}=:\Gamma\subseteq \mathbb{R}$ is the zero point set of $f$;
\item[(ii)] For every $ z\in\mathbb{C}$ satisfying $ \inf_{i\in \mathbb{N} }|z-\lambda_{i}|> \eta>0$, there exists a positive constant $c_{\eta}$  such that 
\[
|f(z)|\ge c_{\eta}e^{\pi|{\textrm Im}(z)|},
\] 
where ${\textrm Im}(z)$ denotes the imaginary part of $z$, and for every $z\in\mathbb{C}$, there exists  a positive constant  $C_{f,1}$ such that 
\[
|f(z)|\le C_{f,1}e^{\pi|{\textrm Im}(z)|}.
\]
\end{itemize}
\end{definition}

Next, we restate Kadec’s $1/4$ theorem as the lemma below, which is an immediate consequence of Theorem 9.8.6 in \cite{Christensen2003} and the elementary fact that the system $\{e^{ikx}\mid k\in\mathbb{Z}\}$ is an orthonormal basis for $L^2([-\pi,\pi])$.

\begin{lemma}\label{Kadec} If $\{\tilde{v}_k\mid  k\in\mathbb{Z}\}\subset \mathbb{R}$ satisfies $|\tilde{v}_k-k|\le L<1/4$ for all $k\in\mathbb{Z}$, then $\{e^{i\tilde{v}_k x}\mid  k\in\mathbb{Z}\}$ forms a Riesz basis for $L^2([-\pi,\pi])$. Specifically, it holds that 
\[
(\cos(\pi L)\!-\!\sin(\pi L))^2\sum_{k\in\mathbb{Z}}|c_k|^2\!\le\! \Big\|\sum_{k\in\mathbb{Z}}c_ke^{i \tilde{v}_k \cdot}\Big\|^2_{L^2([-\pi,\pi])}\!\le\! (2\!-\!\cos(\pi L)+\sin(\pi L))^2\sum_{k\in\mathbb{Z}}|c_k|^2,
\]
for any scales $c_k$.
\end{lemma}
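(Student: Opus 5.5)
The plan is to obtain the two-sided inequality from a perturbation-of-the-identity estimate for the operator sending the orthogonal basis $\{e^{ikx}\}$ to the perturbed exponentials $\{e^{i\tilde v_k x}\}$. Throughout, norms are taken with the normalization under which $\|\sum_k c_k e^{ik\cdot}\|_{L^2([-\pi,\pi])}^2=\sum_k|c_k|^2$, i.e. with respect to $dx/(2\pi)$. This is the normalization implicit in the statement; with the plain Lebesgue measure both sides simply acquire the same factor $2\pi$.

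Set $\rho:=1-\cos(\pi L)+\sin(\pi L)$. Since $0\le L<1/4$, we have $\cos(\pi L)>\sin(\pi L)$, so $0\le\rho<1$. The first step is to work with a finitely supported coefficient sequence $(c_k)$ and put $g:=\sum_k c_k e^{ik\cdot}$. The core estimate, which is the content of Kadec's theorem in the form of \cite[Theorem 9.8.6]{Christensen2003}, is
\[
\Big\|\sum_k c_k\big(e^{i\tilde v_k\cdot}-e^{ik\cdot}\big)\Big\|_{L^2([-\pi,\pi])}\le \rho\Big(\sum_k|c_k|^2\Big)^{1/2}=\rho\,\|g\|_{L^2([-\pi,\pi])}.
\]
Its proof writes $\delta_k:=\tilde v_k-k$ with $|\delta_k|\le L$ and expands $e^{i\delta_k x}-1$ in the orthogonal basis $\{1,\cos(jx),\sin((j-\tfrac12)x)\}_{j\ge1}$ of $L^2([-\pi,\pi])$. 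The coefficients of this expansion are explicit functions of $\delta_k$, and the relevant series are bounded by setting $\delta=L$. This gives
\[
\sum_k c_k\big(e^{i\tilde v_k x}-e^{ikx}\big)=\sum_j\Big(\sum_k c_k a_j(\delta_k)e^{ikx}\Big)\phi_j(x).
\]
The inner factors are bounded by Bessel's inequality, and the outer series sum to $1-\cos(\pi L)$ for the cosine part and $\sin(\pi L)$ for the sine part. This is the step I expect to be the main obstacle if one does it by hand. Since the lemma is stated as a consequence of \cite[Theorem 9.8.6]{Christensen2003}, I will invoke that theorem for this estimate rather than rederive it.

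Given the core estimate, the triangle inequality gives
\[
(1-\rho)\|g\|\le\Big\|\sum_k c_k e^{i\tilde v_k\cdot}\Big\|\le(1+\rho)\|g\|.
\]
Here $1-\rho=\cos(\pi L)-\sin(\pi L)$ and $1+\rho=2-\cos(\pi L)+\sin(\pi L)$. Squaring and using $\|g\|^2=\sum_k|c_k|^2$ yields the displayed inequality for finitely supported sequences.

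The last step passes to general $(c_k)\in\ell^2$. The upper bound shows that the partial sums $\sum_{|k|\le N}c_ke^{i\tilde v_k\cdot}$ form a Cauchy sequence in $L^2$. They therefore converge, consistent with the paper's convention for $\sum_{n\in\mathbb Z}$, and both inequalities persist in the limit. Hence the synthesis operator $T:e^{ik\cdot}\mapsto e^{i\tilde v_k\cdot}$ satisfies $\|I-T\|\le\rho<1$ and is invertible by a Neumann series. Thus $\{e^{i\tilde v_k x}\}$ is the image of an orthonormal basis under a bounded invertible operator, i.e. a Riesz basis, with the stated bounds.
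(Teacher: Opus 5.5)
Your proposal is correct and is essentially the paper's argument. The paper likewise obtains the lemma directly from Kadec's theorem in the form of \cite[Theorem 9.8.6]{Christensen2003} together with the orthonormality of $\{e^{ikx}\}$, and your triangle-inequality and Neumann-series steps are just that citation unpacked. Your remark that the displayed bounds require the normalized measure $dx/(2\pi)$ is also right, and it matches the paper's implicit convention of calling $\{e^{ikx}\}$ an orthonormal basis of $L^2([-\pi,\pi])$.
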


The following lemma is a special case of Theorem 1 in \cite{Zhan1997}, which establishes an upper bound on the spectral norm of Schur products.

\begin{lemma}\label{Lemmarcsingularvalue}
For any matrices $\bC=[c_{ij}:i\in\mathbb{Z}_M,j\in\mathbb{Z}_N]\in\mathbb{C}^{M\times N}$ and $\bD=[d_{ij}:i\in\mathbb{Z}_M,j\in\mathbb{Z}_N]\in\mathbb{C}^{M\times N}$, the Hadamard (or Schur) product of $\bC$ and $\bD$ is denoted by $\bC\circ \bD$. Then,
we have 
\[
\|\bC\circ \bD\|_2\le \min\{{\mathsf{r}}_1(\bC),{\mathsf{c}}_1(\bC)\}\|\bD\|_2,
\]
where ${\mathsf{r}}_1(\bC)=\max_{i\in\mathbb{Z}_M}(\sum_{j\in\mathbb{Z}_N}|c_{ij}|^2)^{1/2}$ and ${\mathsf{c}}_1(\bC)=\max_{j\in\mathbb{Z}_N}(\sum_{i\in\mathbb{Z}_M}|c_{ij}|^2)^{1/2}$ denote the largest row Euclidean norm and largest column Euclidean norm of $\bC$, respectively. 
\end{lemma}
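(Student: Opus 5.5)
The plan is to prove the bound directly through the bilinear form of $\bC\circ\bD$, rather than citing the general Schur multiplier theorem of \cite{Zhan1997}. The idea is to split $\bC\circ\bD$ into a sum of rank-structured pieces, each controlled by $\|\bD\|_2$. I will prove the column version $\|\bC\circ\bD\|_2\le {\mathsf{c}}_1(\bC)\|\bD\|_2$ first and obtain the row version by transposition.

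\emph{Column version.} Fix unit vectors $\ba\in\mathbb{C}^M$ and $\bb\in\mathbb{C}^N$. By \eqref{eqspectral}, $\|\bC\circ\bD\|_2=\sup|\ba^*(\bC\circ\bD)\bb|$, so it suffices to bound this bilinear form. I regard $\bC$ trivially as the factorization $\bC=\bI^*\bC$ and write
\[
\ba^*(\bC\circ\bD)\bb=\sum_{i,j}\overline{a_i}\,c_{ij}d_{ij}b_j=\sum_{k\in\mathbb{Z}_M}\overline{a_k}\sum_{j}d_{kj}\,(c_{kj}b_j).
\]
Put $\bb^{(k)}:=(c_{kj}b_j)_{j\in\mathbb{Z}_N}$ and let $\boldsymbol e_k$ be the $k$-th standard basis vector. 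Then the $k$-th summand equals $(a_k\boldsymbol e_k)^*\bD\,\bb^{(k)}$, so its modulus is at most $|a_k|\,\|\bD\|_2\,\|\bb^{(k)}\|_2$. Summing over $k$ and applying Cauchy--Schwarz in $k$ gives
\[
|\ba^*(\bC\circ\bD)\bb|\le\|\bD\|_2\Big(\sum_k|a_k|^2\Big)^{1/2}\Big(\sum_k\|\bb^{(k)}\|_2^2\Big)^{1/2}.
\]
The first sum equals $1$. For the second, exchanging the order of summation gives
\[
\sum_k\|\bb^{(k)}\|_2^2=\sum_j|b_j|^2\sum_k|c_{kj}|^2\le {\mathsf{c}}_1(\bC)^2.
\]
Taking the supremum over $\ba$ and $\bb$ yields $\|\bC\circ\bD\|_2\le {\mathsf{c}}_1(\bC)\|\bD\|_2$.

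\emph{Row version.} The spectral norm is invariant under transposition, and $(\bC\circ\bD)^T=\bC^T\circ\bD^T$. Applying the column version to the pair $(\bC^T,\bD^T)$ gives
\[
\|\bC\circ\bD\|_2=\|\bC^T\circ\bD^T\|_2\le{\mathsf{c}}_1(\bC^T)\|\bD^T\|_2={\mathsf{r}}_1(\bC)\|\bD\|_2.
\]
Combining the two bounds gives the stated minimum.

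The only step needing care is choosing the regrouping so that the Euclidean column norms of $\bC$ appear, rather than something cruder such as the Frobenius norm. Attaching $c_{kj}$ to the right vector $\bb$ row by row achieves this, and it is exactly the special case $X=\bI$, $Y=\bC$ of the classical Schur factorization bound.
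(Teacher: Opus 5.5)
Your proof is correct. It takes a different route from the paper, which gives no argument of its own and simply quotes the lemma as the $k=1$ case of Theorem~1 in \cite{Zhan1997}. That theorem is the Ky Fan-norm inequality $\sum_{j\le k}s_j(\bC\circ\bD)\le\sum_{j\le k}\min\{{\mathsf{r}}_j(\bC),{\mathsf{c}}_j(\bC)\}\,s_j(\bD)$, where ${\mathsf{r}}_j,{\mathsf{c}}_j$ are the decreasingly ordered row and column lengths.

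Your argument is self-contained. You bound each slice $\overline{a_k}\sum_j d_{kj}c_{kj}b_j$ by $|a_k|\,\|\bD\|_2\,\|\bb^{(k)}\|_2$, apply Cauchy--Schwarz in $k$, and swap the order of summation to expose the column lengths. Transposition then gives the row version. This is the classical Schur factorization bound with $X=\bI$, $Y=\bC$.

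The citation gives far more than is needed: all Ky Fan norms and the full ordered lists of row and column lengths, none of which the paper uses. Your route proves exactly the case that is used, with no external input. It also handles rectangular $\bC,\bD$ directly, which matters because the paper applies the lemma to $s\times M$ matrices. Zhan's theorem, as usually stated, is for square matrices, so the citation implicitly needs zero-padding. Padding is harmless, since it changes neither spectral norms nor row and column lengths, but your version avoids the step.

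One small point to state explicitly: you pass from the definition \eqref{eqspectral} to the bilinear characterization $\|\bB\|_2=\sup_{\|\ba\|_2=\|\bb\|_2=1}|\ba^*\bB\bb|$. This is standard; take $\ba=\bB\bb/\|\bB\bb\|_2$ for one direction and use Cauchy--Schwarz for the other.
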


We provide estimates for the largest row Euclidean norm and largest column Euclidean norm of two classes of matrices that will be used later. Their proofs are presented in Appendix B.
\begin{lemma}\label{Cnorm} Let $M\ge2$ be an integer, $0<\epsilon\le\frac18$, $\frac14\le L<\frac12$, and $\Lambda = \{\lambda_k\in[-1/2,M-1/2]\mid  k\in\mathbb{Z}_M\}$  satisfy \eqref{AssumptionLambda}.
Let $T=\{k\in\mathbb{Z}_M\mid  \frac14-\epsilon<|\lambda_k-k|\le L\}$. 
We define the matrix $\widetilde{\bC}_n=[\frac{1}{|j-\lambda_m-nM|}:  m\in T,j\in\mathbb{Z}_M]$ for any $n\in\{0,\pm 1\}$. 
Then, we have 
\[
\min\{{\mathsf{r}}_1(\widetilde{\bC}_0),{\mathsf{c}}_1(\widetilde{\bC}_0)\}<\frac{\pi}{\sin(\pi/8)}\mbox{ and }\min\{{\mathsf{r}}_1(\widetilde{\bC}_{\pm1}),{\mathsf{c}}_1(\widetilde{\bC}_{\pm 1})\}<\frac{\pi}{\sqrt{2}}.
\]
\end{lemma}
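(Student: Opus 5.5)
The plan is to bound the largest row Euclidean norm ${\mathsf{r}}_1(\widetilde{\bC}_n)$ directly. Since $\min\{{\mathsf{r}}_1,{\mathsf{c}}_1\}\le {\mathsf{r}}_1$, this suffices. If $T=\emptyset$ the matrices are empty, the norms are $0$, and both bounds are trivial. So fix $m\in T$ and estimate $\sum_{j\in\mathbb{Z}_M}|j-\lambda_m-nM|^{-2}$ by comparing the finite sum with an infinite lattice sum that has a closed form. Strict inequality will come from dropping infinitely many positive terms.

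\emph{Case $n=0$.} Set $d:=\lambda_m-m$. Since $m\in T$ and $\epsilon\le 1/8$, we have $1/8\le \tfrac14-\epsilon<|d|\le L<\tfrac12$. The nearest integer to $\lambda_m$ is therefore $m$, and $\lambda_m\notin\mathbb{Z}$. I would use the classical partial-fraction identity
\[
\sum_{j\in\mathbb{Z}}\frac{1}{(j-\lambda_m)^2}=\frac{\pi^2}{\sin^2(\pi\lambda_m)}=\frac{\pi^2}{\sin^2(\pi |d|)} .
\]
Because $|d|\in(1/8,1/2)$ and $\sin$ is increasing on $[0,\pi/2]$, we get $\sin(\pi|d|)>\sin(\pi/8)$. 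The sum over $j\in\mathbb{Z}_M$ is at most the full series, which gives row norm $<\pi/\sin(\pi/8)$. Maximizing over $m\in T$ preserves the strict inequality because $T$ is finite.

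\emph{Case $n=\pm1$.} Here I use only $\lambda_m\in[-1/2,M-1/2]$. For $n=1$ and $j\in\mathbb{Z}_M$,
\[
j-\lambda_m-M\le (M-1)+\tfrac12-M=-\tfrac12 .
\]
So the numbers $|j-\lambda_m-M|$, $j\in\mathbb{Z}_M$, are distinct points spaced by exactly $1$, all at least $1/2$. Ordering them increasingly, the $k$-th one ($k\ge0$) is at least $k+\tfrac12$. Hence
\[
\sum_{j\in\mathbb{Z}_M}\frac{1}{|j-\lambda_m-M|^2}\le\sum_{k=0}^{M-1}\frac{1}{(k+\frac12)^2}<\sum_{k\ge0}\frac{4}{(2k+1)^2}=4\cdot\frac{\pi^2}{8}=\frac{\pi^2}{2},
\]
which gives row norm $<\pi/\sqrt2$. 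The case $n=-1$ is symmetric, since $j-\lambda_m+M\ge 0-(M-\tfrac12)+M=\tfrac12$.

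There is no real obstacle here. The only points needing care are the role of the threshold $\tfrac14-\epsilon\ge\tfrac18$ (it keeps $\lambda_m$ at distance more than $1/8$ from every integer, so that $\sin(\pi|d|)$ stays away from $0$), and the strictness of the inequalities, which follows from truncating the infinite series.
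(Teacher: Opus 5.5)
Your proof is correct and follows essentially the same route as the paper. You bound only $\mathsf{r}_1$. For $n=0$ you compare the row sum with $\sum_{j\in\mathbb{Z}}(j-\lambda_m)^{-2}=\pi^2/\sin^2(\pi\lambda_m)$, using $|\lambda_m-m|>1/8$. For $n=\pm1$ you dominate the row sum by $\sum_{k\ge 0}(k+1/2)^{-2}=\pi^2/2$.

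The differences are cosmetic. For $n=\pm1$ you use $\lambda_m\in[-1/2,M-1/2]$ directly, where the paper substitutes $k=M+m-j$ and uses $|\lambda_m-m|<1/2$. You also make the strict inequality explicit, which the paper's write-up only records as $\le$ in that case.
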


\begin{lemma}\label{ExampleCnorm} Let $M\ge2$ be an integer and $\Lambda = \{\lambda_k\in[-1/2,M-1/2]\mid  k\in\mathbb{Z}_M\}$. Let $\Lambda_1=\{v_k\in[-1/4,M-3/4]\mid  k\in\mathbb{Z}_M\}$. For any index set $T\subset \mathbb{Z}_{M}$, define the matrix $\widetilde{\widetilde{\bC}}_n=[\frac{1}{j-\lambda_m-nM}\frac{j-v_m}{j-v_m-nM}(-1)^{nM}:  m\in T, j\in\mathbb{Z}_M ]$ for any $|n|\ge 2$. 
Then, we have for any $|n|\ge2$
\[
\min\{{\mathsf{r}}_1(\widetilde{\widetilde{\bC}}_n),{\mathsf{c}}_1(\widetilde{\widetilde{\bC}}_n)\}\le\frac{1}{\sqrt{2}(|n|-1)^2}.
\]
\end{lemma}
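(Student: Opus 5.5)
The plan is to bound every entry of $\widetilde{\widetilde{\bC}}_n$ uniformly in absolute value and then bound a row Euclidean norm by $\sqrt{M}$ times that uniform bound. The minimum in the statement is at most ${\mathsf{r}}_1(\widetilde{\widetilde{\bC}}_n)$, so it suffices to control the rows. The column norms are not needed, which is convenient since $|T|$ is arbitrary. The factor $(-1)^{nM}$ has modulus one and can be ignored.

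\textbf{Step 1: range of the differences.} For $j\in\mathbb{Z}_M$ and $\lambda_m\in[-1/2,M-1/2]$ we have $j-\lambda_m\in[-(M-1/2),\,M-1/2]$. Likewise, $v_m\in[-1/4,M-3/4]$ gives $j-v_m\in[-(M-3/4),\,M-3/4]$. In particular $|j-v_m|\le M-3/4<M$.

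\textbf{Step 2: the denominators.} For $|n|\ge2$, the triangle inequality gives
\[
|j-\lambda_m-nM|\ge |n|M-(M-1/2)>(|n|-1)M,
\]
and in the same way $|j-v_m-nM|\ge |n|M-(M-3/4)>(|n|-1)M$. Both quantities are therefore nonzero, so the entries are well defined. Combining with Step 1, each entry satisfies
\[
\Big|\frac{1}{j-\lambda_m-nM}\cdot\frac{j-v_m}{j-v_m-nM}\Big|\le \frac{M}{(|n|-1)^2M^2}=\frac{1}{(|n|-1)^2M}.
\]

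\textbf{Step 3: summing along a row.} Each row, indexed by $m\in T$, has $M$ entries indexed by $j\in\mathbb{Z}_M$. Hence
\[
{\mathsf{r}}_1(\widetilde{\widetilde{\bC}}_n)\le \Big(M\cdot\frac{1}{(|n|-1)^4M^2}\Big)^{1/2}=\frac{1}{(|n|-1)^2\sqrt{M}}.
\]
Since $M\ge2$, this is at most $\frac{1}{\sqrt2(|n|-1)^2}$, which is the claimed bound.

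There is no real obstacle here. The only point needing care is Step 1: the offsets in the intervals for $\Lambda$ and $\Lambda_1$ must be tracked so that $|j-v_m|<M$ and both denominators strictly exceed $(|n|-1)M$. The asymmetric interval $[-1/4,M-3/4]$ for $v_m$ is exactly what makes $j-v_m$ symmetric about zero with modulus at most $M-3/4$.
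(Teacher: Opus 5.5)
Your proposal is correct and follows essentially the same route as the paper. Both arguments bound every entry by $\frac{1}{(|n|-1)^2M}$ via the triangle inequality on the two denominators, sum over the $M$ entries of a row, and use $M\ge2$ together with $\min\{\mathsf{r}_1,\mathsf{c}_1\}\le\mathsf{r}_1$. Your tracking of the interval offsets, which gives strict inequalities and nonzero denominators, is a slightly more careful version of the paper's cruder bounds $|j-\lambda_m|\le M$ and $|j-v_m|\le M$.
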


\subsection{Periodic nonuniform interpolation matrices}

 Given a set $\Lambda:=\{\lambda_{k}\mid  k\in\mathbb{Z}_M\}\subset\mathbb{R}$ with
\[
\min_{j,k\in\mathbb{Z}_M,j\neq k}
\operatorname{dist}_M(\lambda_j-\lambda_{k})>0,
\]
we define
\begin{equation}\label{FLambdax}
F_{\Lambda}(x)
:=
\prod_{k\in\mathbb{Z}_M}
\sin\Big(\frac{\pi(x-\lambda_{k})}{M}\Big),
\quad x\in\mathbb{R}.
\end{equation}
Then, by Definition \ref{Sinetype}, $F_{\Lambda}$ is a sine-type function
with bandwidth $\pi$. Let
\begin{equation}\label{taumn}
\tau_{mn}
:=
\lambda_m+nM,
\quad
m\in\mathbb{Z}_M,\ n\in\mathbb{Z}.
\end{equation}
One sees that
$\{\tau_{mn}\mid m\in\mathbb{Z}_M,n\in\mathbb{Z}\}$
is the zero point set of $F_{\Lambda}(x)$ defined as in
\eqref{FLambdax}. Let us define the Lagrange fundamental interpolation
function
\begin{align}\label{psimn}
\psi_{mn}(x)
:=
\frac{F_{\Lambda}(x)}
{F_{\Lambda}^{'}(\tau_{mn})(x-\tau_{mn})}
&=
\frac{
M\prod_{k\in\mathbb{Z}_M}
\sin\frac{\pi(x-\tau_{kn})}{M}
}{
\pi(x-\tau_{mn})
\prod_{k\in\mathbb{Z}_M\setminus\{m\}}
\sin\frac{\pi(\lambda_m-\lambda_k)}{M}
}
\nonumber\\
&=
\frac{
M\prod_{k\in\mathbb{Z}_M}
\sin\frac{\pi(x-\lambda_k)}{M}
}{
(-1)^{nM}\pi(x-\tau_{mn})
\prod_{k\in\mathbb{Z}_M\setminus\{m\}}
\sin\frac{\pi(\lambda_m-\lambda_k)}{M}
},
\end{align}
for any $m\in\mathbb{Z}_M$ and $n\in\mathbb{Z}$, where
$F_{\Lambda}'$ denotes the derivative of $F_{\Lambda}$.

The series
$\sum_{n\in\mathbb{Z}}\sum_{m=0}^{M-1}f(\tau_{mn})\psi_{mn}(x)$
is called the periodic nonuniform sampling series
\cite{Yen1956,ButzerHinsen1989,eldar2002filterbank,strohmer2006,annaby2016bounds,WangWuChen2019}.
The engineering background of periodic nonuniform sampling is multichannel
Analog-to-Digital Conversion \cite{PrendergastLevyHurst2004}. According to
\cite[Theorem 1 in Lecture 23]{levin1996lectures},
$\{\psi_{mn}\mid m\in\mathbb{Z}_M,n\in\mathbb{Z}\}$
forms a Riesz basis of $\mathcal{B}_{\pi}(\mathbb{R})$ and
\begin{equation}\label{Biorth}
\frac{1}{\sqrt{2\pi}}
\int_{-\pi}^{\pi}
\widehat{\psi_{mn}}(x)
\exp(i\tau_{pq}x)\,dx
=
\psi_{mn}(\tau_{pq})
=
\delta_{\tau_{mn},\tau_{pq}},
\quad
m,p\in\mathbb{Z}_M,\ n,q\in\mathbb{Z},
\end{equation}
where $\delta_{x,y}$ is equal to $1$ if $x=y$, and $0$ otherwise.
Moreover,
$\{\exp(i\tau_{mn}x)\mid m\in\mathbb{Z}_M,n\in\mathbb{Z}\}$
forms a Riesz basis of $L^2([-\pi,\pi])$
(see \cite[Theorem 2 in Lecture 23]{levin1996lectures}).

By \eqref{Biorth}, these two sets
\[
\Big\{e^{i\tau_{mn}x}\mid m\in\mathbb{Z}_M,n\in\mathbb{Z}\Big\}
\mbox{ and }
\Big\{
\frac{1}{\sqrt{2\pi}}
\overline{\widehat{\psi_{mn}}}\mid 
m\in\mathbb{Z}_M,n\in\mathbb{Z}
\Big\}
\]
form a pair of biorthogonal bases. Therefore, for any
$j\in\mathbb{Z}_M$, the complex exponential function satisfies
\begin{equation}\label{gs0pre}
\exp(ijy)
=
\sum_{n\in\mathbb{Z}}
\sum_{m\in\mathbb{Z}_M}
\Big\langle
\exp(ij\cdot),
\frac{1}{\sqrt{2\pi}}
\overline{\widehat{\psi_{mn}}}
\Big\rangle_{L^2([-\pi,\pi])}
e^{i\tau_{mn}y}
=
\sum_{n\in\mathbb{Z}}
\sum_{m\in\mathbb{Z}_M}
\psi_{mn}(j)e^{i\tau_{mn}y}.
\end{equation}
The equality \eqref{gs0pre} holds in the $L^2([-\pi,\pi])$ sense. It also holds pointwise for every $y\in(-\pi,\pi)$, as shown in Appendix A.

To proceed, we introduce the modified interpolation function
\begin{equation}\label{psimnstar}
\psi_{mn}^{*}(x)
:=
e^{-inM/10}\psi_{mn}(x),
\quad
m\in\mathbb{Z}_M,\ n\in\mathbb{Z}.
\end{equation}
By \eqref{psimnstar}, we have
$\psi_{mn}(j)=e^{inM/10}\psi_{mn}^{*}(j)$. 
Setting
$y=x-1/10$ in \eqref{gs0pre}, multiplying both sides by
$e^{ij/10}$, and invoking \eqref{taumn}, we obtain
\begin{align}\label{gs0}
\exp(ijx)
&=
\sum_{n\in\mathbb{Z}}
\sum_{m\in\mathbb{Z}_M}
e^{ij/10}e^{inM/10}
\psi_{mn}^{*}(j)
\exp\big(i\tau_{mn}(x-1/10)\big)
\nonumber\\
&=
\sum_{n\in\mathbb{Z}}
\sum_{m\in\mathbb{Z}_M}
e^{i(j-\lambda_m)/10}
\psi_{mn}^{*}(j)
\exp(i\tau_{mn}x).
\end{align}
The equality in \eqref{gs0} holds in the
$L^2([-\pi+1/10,\pi+1/10])$ sense and pointwise for every
$x\in(-\pi+1/10,\pi+1/10)$.

Our goal is to estimate the spectral norm $\|\bA_{\sigma,L}^{-1}\|_2$ of the nonuniform Fourier matrix $\bA_{\sigma,L}^{-1}$, which is defined  in \eqref{Fouriermatrix}. By the following lemma, the spectral norm estimate for $\bA_{\sigma,L}^{-1}$ is reduced to that of periodic nonuniform interpolation matrices.

\begin{lemma}\label{LemmaBsigmaL}
Let $s\le M$, and define an $M$-by-$M$ diagonal matrix
\begin{equation}\label{eqbIs}
\bI_s
=
\operatorname{diag}
(\underbrace{1,1,\dots,1}_{s},
\underbrace{0,0,\dots,0}_{M-s}).
\end{equation}
Moreover, let $\bB_{\sigma,L}$ be the matrix given by
\begin{equation}\label{DefeqBsigmaL}
\bB_{\sigma,L}
:=
\Big[
\sum_{n\in\mathbb{Z}}
\psi_{mn}^{*}(j)
:
m\in\mathbb{Z}_M,\ j\in\mathbb{Z}_M
\Big],
\end{equation}
where $\psi_{mn}^{*}$ is defined as in \eqref{psimnstar},
and let $\bA_{\sigma,L}$ be defined as in \eqref{Fouriermatrix}. Then
\begin{equation*}
\sqrt{M}\|\bI_s\bA_{\sigma,L}^{-1}\|_2
=
\|\bI_s\bB_{\sigma,L}\|_2.
\end{equation*}
\end{lemma}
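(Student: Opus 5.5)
The plan is to evaluate the expansion \eqref{gs0} on a suitable uniform grid so that it becomes a matrix factorization linking $\bB_{\sigma,L}$, $\bA_{\sigma,L}^{-1}$ and a unitary DFT matrix. The key observation is that for $x=2\pi t/M$ with $t\in\mathbb Z$ we have $\exp(i\tau_{mn}x)=\exp(2\pi i\lambda_m t/M)\,e^{2\pi i n t}=\exp(2\pi i\lambda_m t/M)$, which is independent of $n$. The sum over $n$ therefore collapses onto the entries of $\bB_{\sigma,L}$.

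\textbf{Step 1: choice of grid.} The open interval $(-\pi+1/10,\pi+1/10)$ has length $2\pi$. Neither endpoint equals $2\pi t/M$ with $t\in\mathbb Z$, since that would force $\pi\in\mathbb Q$. Hence the interval contains exactly $M$ points $2\pi t/M$, with $t$ ranging over a set $\mathcal T$ of $M$ consecutive integers, which is a complete residue system mod $M$. At each such point \eqref{gs0} holds pointwise, giving
\[
e^{2\pi i jt/M}=\lim_{N\to\infty}\sum_{m\in\mathbb Z_M}e^{i(j-\lambda_m)/10}\Big(\sum_{|n|\le N}\psi^*_{mn}(j)\Big)e^{2\pi i\lambda_m t/M},\qquad t\in\mathcal T .
\]

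\textbf{Step 2: convergence of the inner sums.} The matrix $\bV=[e^{2\pi i\lambda_m t/M}]_{m\in\mathbb Z_M,\,t\in\mathcal T}$ is, up to a unitary diagonal factor coming from the shift of $t$, a Vandermonde matrix in the nodes $e^{2\pi i\lambda_m/M}$. These nodes are distinct because the $\lambda_m$ are distinct mod $M$, so $\bV$ is invertible. Write $S_N(m,j)=\sum_{|n|\le N}\psi^*_{mn}(j)$. The vectors $\big(e^{i(j-\lambda_m)/10}S_N(m,j)\big)_m$ multiplied by $\bV$ converge, so applying $\bV^{-1}$ shows that each $S_N(m,j)$ converges. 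This justifies the definition of $\bB_{\sigma,L}$ in \eqref{DefeqBsigmaL} and yields the exact identity.

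\textbf{Step 3: matrix form.} Set $\bD_1=\operatorname{diag}(e^{ij/10}:j\in\mathbb Z_M)$, $\bD_2=\operatorname{diag}(e^{-i\lambda_m/10}:m\in\mathbb Z_M)$ and $\bE=[e^{2\pi i jt/M}]_{j\in\mathbb Z_M,t\in\mathcal T}$. Then
\[
\bE=\bD_1\bB_{\sigma,L}^{T}\bD_2\bV .
\]
Because $\mathcal T$ is a residue system mod $M$, $\bE$ equals the inverse DFT matrix up to a column permutation, so $\bE/\sqrt M$ is unitary. Likewise $\bV$ agrees, up to a column permutation and unitary diagonal factors, with $\bA_{\sigma,L}^{*}$, whose entries are $e^{2\pi i t\lambda_m/M}$.

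Solving and transposing gives
\[
\bB_{\sigma,L}=\bD_2^{-1}\,\overline{\bA_{\sigma,L}^{-1}}\,\bU\,\bD_1^{-1},
\]
where $\bU$ has $\|\bU\|_2$-preserving structure; specifically, $\bU/\sqrt M$ is unitary. The rows of $\bB_{\sigma,L}$ and of $\bA_{\sigma,L}^{-1}$ are both indexed by $m$. The diagonal matrix $\bD_2^{-1}$ commutes with $\bI_s$, so left multiplication by $\bI_s$ passes through it. Entrywise conjugation preserves the spectral norm, and so do the unitary factors. Together these give $\|\bI_s\bB_{\sigma,L}\|_2=\sqrt M\|\bI_s\bA_{\sigma,L}^{-1}\|_2$.

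\textbf{Main obstacle.} The delicate point is the bookkeeping of Steps 1 and 3. One must confirm that the grid points used lie in the open interval where \eqref{gs0} holds pointwise. One must also track the index alignment between $\bV$ and $\bA_{\sigma,L}^{*}$, including the permutations and unitary diagonal factors, so that $\bI_s$ acts on the correct rows. The rest is routine.
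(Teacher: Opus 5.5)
Your proposal is correct and is essentially the paper's proof. The paper evaluates \eqref{gs0} at the same $M$ grid points, indexed as $x_k=-2\pi(r_0+k)/M$, so that the resulting factorization $\widetilde{\bD}=\widetilde{\bA}_{\sigma,L}\bP\bB_{\sigma,L}\bQ$ involves $\bA_{\sigma,L}$ itself (up to a right diagonal unitary) rather than a transpose/conjugate of $\bA_{\sigma,L}^{*}$; it then concludes as you do, via commutation of diagonal unitaries with $\bI_s$ and unitary invariance. Your Step 2 makes explicit the convergence of $\sum_{n}\psi^*_{mn}(j)$, which the paper uses tacitly when it pulls the $n$-sum inside. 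Also, since $\boldsymbol{V}=\operatorname{diag}(e^{2\pi i\lambda_m t_0/M})\,\bA_{\sigma,L}^{*}$ with $t_0=\min\mathcal T$, your displayed formula for $\bB_{\sigma,L}$ drops a diagonal factor to the left of $\overline{\bA_{\sigma,L}^{-1}}$; this is harmless for the same commutation reason.
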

\begin{proof}
Set
$r_0
:=
\lceil
-\frac{M}{2}-\frac{M}{20\pi}
\rceil$. Then the points
\[
x_k:=-\frac{2\pi(r_0+k)}{M}\in\Big(-\pi+\frac{1}{10},
\pi+\frac{1}{10}\Big)\mbox{ for all }
k\in\mathbb{Z}_M.
\]
Define two $M\times M$ matrices
\[
\widetilde{\bD}
:=
\Big[
\exp\Big(
-\frac{2\pi i(r_0+k)j}{M}
\Big):
k,j\in\mathbb{Z}_M
\Big]
\]
and
\begin{equation}\label{Atildedef}
\widetilde{\bA}_{\sigma,L}
:=
\Big[
\exp\Big(
-\frac{2\pi i(r_0+k)\lambda_m}{M}
\Big):
k,m\in\mathbb{Z}_M
\Big].
\end{equation}
Since the row indices
$r_0,r_0+1,\ldots,r_0+M-1$ are $M$ consecutive integers,
$\widetilde{\bD}/\sqrt{M}$ is unitary.

Applying \eqref{gs0} at $x=x_k$, and noting that $e^{-2\pi in(r_0+k)}=1$ for any $n,k\in\mathbb{Z}_M$,
we have for each $j\in\mathbb{Z}_M$,
\begin{align*}
\exp\Big(
-\frac{2\pi ij(r_0+k)}{M}
\Big)
&=
\sum_{n\in\mathbb{Z}}
\sum_{m\in\mathbb{Z}_M}
e^{i(j-\lambda_m)/10}
\psi_{mn}^{*}(j)
\exp\Big(
-\frac{2\pi i(\lambda_m+nM)(r_0+k)}{M}
\Big)
\\
&=
\sum_{m\in\mathbb{Z}_M}
e^{i(j-\lambda_m)/10}
\Big(
\sum_{n\in\mathbb{Z}}
\psi_{mn}^{*}(j)
\Big)
\exp\Big(
-\frac{2\pi i(r_0+k)\lambda_m}{M}
\Big),
\end{align*}
where the equality holds pointwise for every $k\in\mathbb{Z}_M$.

Define the diagonal unitary matrices
\[
\bP
:=
\operatorname{diag}
\big(
e^{-i\lambda_m/10}:
m\in\mathbb{Z}_M
\big)
\mbox{ and }
\bQ
:=
\operatorname{diag}
\big(
e^{ij/10}:
j\in\mathbb{Z}_M
\big).
\]
Varying the index $j\in\mathbb{Z}_M$, arranging the
above column vectors into a matrix by columns, and using
\eqref{DefeqBsigmaL}, we obtain the fundamental factorization
\begin{equation}\label{shiftmatrixidentity}
\widetilde{\bD}
=
\widetilde{\bA}_{\sigma,L}
\bP
\bB_{\sigma,L}
\bQ.
\end{equation}
Recalling the definitions of $\bA_{\sigma,L}$ in \eqref{Fouriermatrix} and $\widetilde{\bA}_{\sigma,L}$ in \eqref{Atildedef},  a direct comparison of the entries gives
\begin{equation}\label{AtildeA}
\widetilde{\bA}_{\sigma,L}
=
\bA_{\sigma,L}\bE,
\end{equation}
with the unitary diagonal matrix
\[
\bE
:=
\operatorname{diag}
\Big(
\exp\Big(
-\frac{2\pi i r_0\lambda_m}{M}
\Big):
m\in\mathbb{Z}_M
\Big).
\]
Combining
\eqref{shiftmatrixidentity} and \eqref{AtildeA}, we obtain
\begin{equation}\label{Bfactorization}
\bB_{\sigma,L}
=
\bP^{-1}
\bE^{-1}
\bA_{\sigma,L}^{-1}
\widetilde{\bD}
\bQ^{-1}.
\end{equation}
Since the three matrices $\bI_s$ in \eqref{eqbIs}, $\bP$, and $\bE$ are diagonal, they commute pairwise. Hence, multiplying \eqref{Bfactorization} on the left by $\bI_s$ gives
\[
\bI_s \bB_{\sigma,L} = \bP^{-1}\bE^{-1}\bI_s \bA_{\sigma,L}^{-1}\widetilde{\bD}\bQ^{-1}.
\]
Taking the spectral norm and invoking its unitary invariance \cite[Page 320]{Horn2012}, we obtain
\[
\bigl\|\bI_s \bB_{\sigma,L}\bigr\|_2=\|\bI_s \bA_{\sigma,L}^{-1}\widetilde{\bD}\bQ^{-1}\|_2 = \bigl\|\bI_s \bA_{\sigma,L}^{-1}\widetilde{\bD}\bigr\|_2.
\]
Finally, because $\widetilde{\bD}/\sqrt{M}$ is unitary,
\[
\bigl\|\bI_s \bA_{\sigma,L}^{-1}\widetilde{\bD}\bigr\|_2 = \sqrt{M}\,\bigl\|\bI_s \bA_{\sigma,L}^{-1}\bigr\|_2.
\]
Consequently,
\(
\bigl\|\bI_s \bB_{\sigma,L}\bigr\|_2 = \sqrt{M}\,\bigl\|\bI_s \bA_{\sigma,L}^{-1}\bigr\|_2,
\)
which completes the proof. 
\end{proof}

We provide a concrete example demonstrating Lemma \ref{LemmaBsigmaL}.
\begin{example} Let $M=2$, $\lambda_0=0$, and $\lambda_1=2/3$. Then, $r_0:=\lceil -\frac{M}{2}-\frac{M}{20\pi}\rceil=\lceil -1-\frac{1}{10\pi}\rceil=-1$ and associated sampling points $x_0=\pi$ and $x_1=0$.
Both points lie in the interval $(-\pi+1/10,\pi+1/10)$. Choose $\sigma=2/3$ and $L=1/3$. We compute
\[
\bA_{\sigma,L}=
\begin{bmatrix}
1 & 1\\
1 & e^{-2\pi i/3}
\end{bmatrix}
\mbox{ and }
\bA_{\sigma,L}^{-1}=\frac{1}{e^{-2\pi i/3}-1}
\begin{bmatrix}
e^{-2\pi i/3} & -1\\
-1 & 1
\end{bmatrix}.
\]
The singular values of $\bA^{-1}_{\sigma,L}$ are $1$ and $1/\sqrt{3}$.

For $M=2$ and $\Lambda=\{0,2/3\}$, the sine-type function is $F_{\Lambda}(x)=\sin\frac{\pi x}{2}\sin\frac{\pi(x-2/3)}{2}$,
and the fundamental interpolation functions satisfy
$\psi_{0n}(x)=-\frac{4}{\pi\sqrt3}\frac{F_{\Lambda}(x)}{x-2n}$, $\psi_{1n}(x)=\frac{4}{\pi\sqrt3}\frac{F_{\Lambda}(x)}{x-2/3-2n}$.
Recalling that $\psi_{mn}^*(x)=e^{-in/5}\psi_{mn}(x)$, we evaluate the entries of $\bB_{\sigma,L}$ row by row. Let $(\bB_{\sigma,L})_{ij}$ denote the $(i,j)$-th entry of the matrix $\bB_{\sigma,L}$.
For the column $j=0$, only the term $m=n=0$ contributes, giving
$(\bB_{\sigma,L})_{00}=1$ and $(\bB_{\sigma,L})_{10}=0$.

For the remaining two entries, we invoke the Fourier series for $e^{i\beta x}$ on $[-\pi,\pi]$:
\begin{equation}\label{Fourierexpansion}
\sum_{k\in\mathbb{Z}}\frac{e^{ikx}}{k+\beta}
=
\frac{\pi e^{i\beta (\pi-x)}}{\sin(\pi \beta)}\mbox{ for any }0<x<2\pi\mbox{ and any }\beta\notin\mathbb{Z},
\end{equation}
where the series is understood in the symmetric sense (see \cite[Chapter~2, Example~3]{SteinShakarchiFourier}).

For the column \(j=1\), we evaluate at \(x=1\). Noting that \(F_{\Lambda}(1)=\sin(\pi/6)=1/2\), we obtain $\psi_{0n}(1)=\frac{2}{\pi\sqrt3}\frac1{2n-1}$ and 
\[
(\bB_{\sigma,L})_{01}
\!=\!\sum_{n\in\mathbb Z}e^{-in/5}\psi_{0n}(1)
\!=\!\frac{1}{\pi\sqrt3}\sum_{n\in\mathbb Z}\frac{e^{in(2\pi-1/5)}}{n-1/2}\!=\!\frac{1}{\pi\sqrt3}\frac{\pi e^{i(-\frac12)(\pi-2\pi+\frac15)} }{\sin(-\pi/2)}
\!=\!-\frac{i}{\sqrt3}e^{-\frac{i}{10}},
\]
where we have used  \eqref{Fourierexpansion} with \(x=2\pi-1/5\) and \(\beta=-1/2\) in the third equality.
Similarly, we obtain $\psi_{1n}(1)=\frac{2}{\pi\sqrt3}\frac1{1/3-2n}$ and 
\[
(\bB_{\sigma,L})_{11}
\!=\!
-\frac{2}{\pi\sqrt3}\sum_{n\in\mathbb Z}\frac{e^{-in/5}}{2n-1/3}
\!=\!
-\frac{1}{\pi\sqrt3}\sum_{n\in\mathbb Z}\frac{e^{in(2\pi-\frac15)}}{n-1/6}\!=\!-\frac{1}{\pi\sqrt3}\frac{\pi e^{i(-\frac16)(\pi-2\pi+\frac15)}}{\sin(-\pi/6)}\!=\!\frac{2}{\sqrt{3}}e^{i(\frac{\pi}{6}-\frac{1}{30})},
\]
where we have used \eqref{Fourierexpansion} with \(x=2\pi-1/5\) and \(\beta=-1/6\) in the third equality.

Consequently,
\[
\bB_{\sigma,L}
=
\begin{bmatrix}
1 & -\frac{i}{\sqrt3}e^{-\frac{i}{10}}\\
0 & \frac{2}{\sqrt{3}}e^{i(\frac{\pi}{6}-\frac{1}{30})}
\end{bmatrix}.
\]
A direct computation yields that the singular values of  $\bB_{\sigma,L}$ are $\sqrt{2}$ and $\sqrt{2}/\sqrt{3}$. Therefore, 
$\sqrt{2}\|\bI_s\bA_{\sigma,L}^{-1}\|_2
=\|\bI_s\bB_{\sigma,L}\|_2$ for any $s\in\{1,2\}$. Furthermore, 
\begin{align*}
&\bP^{-1}
\bE^{-1}
\bA_{\sigma,L}^{-1}
\widetilde{\bD}
\bQ^{-1}\\
&={\textrm diag}(1,\!e^{\frac{i}{15}}){\textrm diag}(1,\!e^{-\frac{2\pi i}{3}})
\frac{1}{e^{-\!\frac{2\pi i}{3}}\!-\!1}
\begin{bmatrix}
e^{-\!\frac{2\pi i}{3}} & -1\\
-1 & 1
\end{bmatrix}\begin{bmatrix}1&-1\\ 1&1\end{bmatrix}{\textrm diag}(1,e^{-\!\frac{i}{10}})=\bB_{\sigma,L}
\end{align*}
verifies \eqref{Bfactorization}.
\end{example}

\begin{remark}\label{RemarkKadec}
The discrete version of Kadec's 1/4 theorem, presented in \cite[Theorem 2.2]{asipchuk2025concerning} and \cite[Corollary 2.1]{yu2023on}, asserts that if $\{\lambda_k\mid  k\in\mathbb{Z}_M\}\subset \mathbb{R}$ satisfies $|\lambda_k-k|\le L<1/4$ for all $k\in\mathbb{Z}_M$, then for any vector $\ba\in\mathbb{C}^{M}$ with $\|\ba\|_2=1$, we have
\[
\cos(\pi L)-\sin(\pi L)\le \frac{\|\bA_{\sigma,L}\ba\|_2}{\sqrt{M}}\le 2-\cos(\pi L)+\sin(\pi L). 
\]
 In addition, the earlier work \cite{MarzoSeip2009} also establishes an upper bound for $\|\bA_{\sigma,L}^{-1}\|_2$ under the condition $L\le 1/4-\epsilon$. However, it does not provide an explicit expression for the constant. As $L\le \frac14-\epsilon$ with $0<\epsilon\le 1/8$, the inequality $\cos(\pi L)-\sin(\pi L)>(1-4L)\ge 4\epsilon$ is always satisfied.  This immediately implies the estimate
\begin{equation}\label{1/4}    
\|\bB_{\sigma,L}\|_2=\sqrt{M}\|\bA_{\sigma,L}^{-1}\|_2\le\frac{1}{\cos(\pi L)-\sin(\pi L)}\le \frac{1}{4\epsilon}
\quad \text{for any }  L\le 1/4-\epsilon,
\end{equation}
where the matrix $\bB_{\sigma,L}$ is defined as in \eqref{DefeqBsigmaL}.
\end{remark}

Let $\Lambda_1=\{v_k\mid  k\in\mathbb{Z}_M\}\subset\mathbb{R}$ be another set of points, chosen depending on $\Lambda$ so as to enable the application of Kadec's $1/4$ theorem. We introduce a suitable decomposition of $\psi_{mn}^{*}(x)$. Define 
\begin{equation}\label{DefV}
V(x):=\frac{F_{\Lambda}(x)}{F_{\Lambda_1}(x)}=\prod_{k\in\mathbb{Z}_M}\frac{\sin\frac{\pi(x-\lambda_{k})}{M}}{\sin\frac{\pi(x-v_{k})}{M}}.
\end{equation}
Whenever $F_{\Lambda}$ and  $F_{\Lambda_1}$ in \eqref{DefV} have common zeros, the quotient $V(x)=F_{\Lambda}(x)/F_{\Lambda_1}(x)$ is understood after cancellation of the common factors.
Let
\[
R_{mn}(x):=\frac{x-v_m-nM}{x-\lambda_m-nM}V(x),
\]
and
\begin{equation}\label{phimn}
\phi_{mn}(x):=\frac{F_{\Lambda_1}(x)e^{-inM/10}}{F_{\Lambda_1}'(v_m+nM)(x-v_m-nM)},
\end{equation}
where $m\in\mathbb{Z}_M$, $n\in\mathbb{Z}$, $x\in\mathbb{R}$, $F_{\Lambda}$ and $F_{\Lambda_1}$ are defined as in \eqref{FLambdax}. Notice that $\phi_{mn}$ coincides with $\psi_{mn}^{*}$ defined as in \eqref{psimn} when $\Lambda_1=\Lambda$. It follows that
\begin{equation}\label{VRQ1}
R_{mn}(x)\phi_{mn}(x)=\frac{F_{\Lambda}(x)e^{-inM/10}}{F_{\Lambda_1}'(v_m\!+\!nM)(x\!-\!\lambda_m\!-\!nM)} 
\end{equation}
and
\begin{equation}\label{VRQ2}
R_m(\lambda_m+nM)\phi_{m0}(\lambda_m+nM)=\frac{F'_{\Lambda}(\lambda_m+nM)e^{-inM/10}}{F_{\Lambda_1}'(v_m+nM)},
\end{equation}
where in the second identity we have used the fact
\[
\frac{F_{\Lambda}(x)}{x\!-\lambda_m\!-nM}\Big|_{x=\lambda_m+nM}\!\!=\!\!\lim_{x\to\lambda_m+nM}\frac{F_{\Lambda}(x)\!-\!0}{x\!-\!\lambda_m-nM}\!\!=\!\!\lim_{x\to\lambda_m+nM}\frac{F_{\Lambda}(x)\!\!-\!\!F_{\Lambda}(\lambda_m+nM)}{x\!-\!\lambda_m-nM}\!\!=\!\!F'_{\Lambda}(\lambda_m+nM).
\]
By \eqref{VRQ1} and \eqref{VRQ2}, we can express $\psi_{mn}^{*}$ in \eqref{psimn} in terms of $\phi_{mn}$ in \eqref{phimn} as follows
\[
\psi_{mn}^{*}(x)=\frac{R_{mn}(x)\phi_{mn} (x)}{R_{mn}(\lambda_m+nM)\phi_{mn}(\lambda_m+nM)}
=\frac{V(x)\frac{x-v_m-nM}{x-\lambda_m-nM}\phi_{mn}(x)}{F'_{\Lambda}(\lambda_m+nM)/F_{\Lambda_1}'(v_m+nM)},\ x\in\mathbb{R}.
\]
Recall that $F_{\Lambda}$ and $F_{\Lambda_1}$ are defined as in \eqref{FLambdax}. It follows that
\[
\frac{F'_{\Lambda_1}(x+nM)}{F'_{\Lambda}(x+nM)}=\frac{F'_{\Lambda_1}(x )}{F'_{\Lambda}(x )}\mbox{ for any }n\in\mathbb{Z}.
\]
As a result, we obtain
\begin{equation}\label{psimnrewrite}
\psi_{mn}^{*}(x)
=V(x)\frac{ F_{\Lambda_1}'(v_m)}{F'_{\Lambda}(\lambda_m)}\frac{x-v_m-nM}{x-\lambda_m-nM}\phi_{mn}(x),\ x\in\mathbb{R},\ m\in\mathbb{Z}_M, n\in\mathbb{Z}.
\end{equation}

Using  Kadec's 1/4 theorem, we also have the following lemma.
\begin{lemma}\label{27} Let \(0<\epsilon<1/4\). Assume that $\Lambda_1=\{v_{k}\mid  k\in\mathbb{Z}_M\}$ satisfies $|v_{k}-k|\le 1/4-\epsilon$ for all $k\in\mathbb{Z}_M$. Then, we have for any $n\in\mathbb{Z}$ 
\[  
\| [\phi_{mn}(j): m,j\in\mathbb{Z}_M]\|_2\le \frac{1}{4\epsilon}, 
\]
where the entry $\phi_{mn}(j)$ is defined as in \eqref{phimn}.
\end{lemma}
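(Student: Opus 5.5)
The plan is to read the entries $\phi_{mn}(j)$ as inner products against a biorthogonal dual system, and to use Kadec's theorem (Lemma \ref{Kadec}) to bound the Riesz constants of that system. The unimodular factor $e^{-inM/10}$ in \eqref{phimn} is the same for every entry once $n$ is fixed, so it can be dropped. It then suffices to bound the matrix $[\psi^{(1)}_{mn}(j): m,j\in\mathbb{Z}_M]$, where
\[
\psi^{(1)}_{mn}(x):=\frac{F_{\Lambda_1}(x)}{F_{\Lambda_1}'(v_m+nM)(x-v_m-nM)}
\]
is the Lagrange fundamental function \eqref{psimn} built from $\Lambda_1$ instead of $\Lambda$.

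First, apply the framework of \eqref{taumn}--\eqref{Biorth} to $\Lambda_1$. Set $\tau^{(1)}_{mn}:=v_m+nM$. Then $\{e^{i\tau^{(1)}_{mn}x}\}$ is a Riesz basis of $L^2([-\pi,\pi])$, and its biorthogonal dual is $\{g_{mn}:=\frac{1}{\sqrt{2\pi}}\overline{\widehat{\psi^{(1)}_{mn}}}\}$. As in the derivation of \eqref{gs0pre},
\[
\psi^{(1)}_{mn}(j)=\big\langle e^{ij\cdot},g_{mn}\big\rangle_{L^2([-\pi,\pi])}.
\]
Now fix $n$ and take $\ba=(a_j)$, $\bb=(b_m)$ with $\|\ba\|_2=\|\bb\|_2=1$. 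Then
\[
\bb^{*}\,[\psi^{(1)}_{mn}(j)]\,\ba=\Big\langle \sum_j a_j e^{ij\cdot},\ \sum_m b_m g_{mn}\Big\rangle .
\]
By Cauchy--Schwarz this is at most $\|\sum_j a_je^{ij\cdot}\|\cdot\|\sum_m b_m g_{mn}\|$. The first factor equals $\sqrt{2\pi}$ by orthogonality of the integer exponentials.

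For the second factor we need the dual Riesz bound. Every point $\tau^{(1)}_{mn}$ lies within $1/4-\epsilon$ of the integer $m+nM$, and $(m,n)\mapsto m+nM$ is a bijection onto $\mathbb{Z}$. Hence Lemma \ref{Kadec} applies with $L=1/4-\epsilon$. Normalizing against the orthonormal basis $e^{ikx}/\sqrt{2\pi}$, it gives the lower Riesz bound
\[
\Big\|\sum c_{mn}e^{i\tau^{(1)}_{mn}\cdot}\Big\|^2\ge 2\pi(\cos\pi L-\sin\pi L)^2\sum|c_{mn}|^2 .
\]
A standard fact is that the dual of a Riesz basis with lower bound $A$ has upper bound $1/A$. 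So $\|\sum_m b_m g_{mn}\|^2\le \frac{1}{2\pi(\cos\pi L-\sin\pi L)^2}$. To keep the argument self-contained I would verify this directly: write $h=\sum b_m g_{mn}$, expand $h$ in the basis, and test $h$ against $\sum \bar c\, e^{i\tau}$.

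Multiplying the two factors, the $2\pi$ cancels, and the matrix norm is at most $1/(\cos\pi L-\sin\pi L)$. By the inequality in Remark \ref{RemarkKadec}, $\cos(\pi L)-\sin(\pi L)>1-4L=4\epsilon$, which gives the bound $1/(4\epsilon)$.

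The main obstacle is bookkeeping the normalization constants. As stated, Lemma \ref{Kadec} omits the $2\pi$ factor relative to the orthonormal basis, so one must check that the scalings of the primal and dual systems cancel exactly. One must also justify that the dual functions in \eqref{Biorth} are the genuine biorthogonal (hence Riesz-dual) system. This follows from \cite{levin1996lectures}, since $F_{\Lambda_1}$ is sine-type.
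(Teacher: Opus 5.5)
Your proposal is correct and is essentially the paper's argument written in adjoint form. The paper expands $f=\sum_j b_j e^{ij\cdot}$ in the Riesz basis $\{e^{inM/10}e^{i(v_m+nM)x}\}$ and uses biorthogonality with $\frac{1}{\sqrt{2\pi}}\overline{\widehat{\phi_{mn}}}$ to identify the coefficients as $c_{m,n}=\sum_j b_j\phi_{mn}(j)$. It then bounds $\sum_m|c_{m,n}|^2$ directly by the Kadec lower Riesz bound, which spares you the separate step about the dual basis having upper bound $1/A$. Your tracking of the $2\pi$ factor is sound; the paper instead tacitly uses the normalized $L^2$ measure when it writes $\|f\|_{L^2}=\|\bb\|_2$, and both routes give $1/(\cos\pi L-\sin\pi L)<1/(4\epsilon)$.
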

\begin{proof}
For any vector \(\bb=(b_j: j\in\mathbb{Z}_M)\) with $\|\bb\|_2=1$, we let 
\[
f(x)=\sum_{j\in\mathbb{Z}_M}b_{j}\exp(ijx),\ x\in[-\pi,\pi].
\]
Then \(\|f\|_{L^{2}([-\pi,\pi])}=\|\bb\|_2=1\).  
By \eqref{Biorth} and \eqref{phimn}, the two systems 
\[
\big\{e^{inM/10}e^{i(v_m+nM)x}\mid  m\in\mathbb{Z}_M, n\in\mathbb{Z}\big\} 
\mbox{ and }
\Big\{\frac{1}{\sqrt{2\pi}}\overline{\widehat{\phi_{mn}}}\mid m\in\mathbb{Z}_M, n\in\mathbb{Z}\Big\}
\]
also form a pair of biorthogonal bases for $L^2([-\pi,\pi])$ as
\begin{align*}
&\frac{1}{\sqrt{2\pi}}
\int_{-\pi}^{\pi}
\widehat{\phi_{mn}}(x)
e^{iqM/10}e^{i(v_p+qM)x}\,dx\\
&=\frac{1}{\sqrt{2\pi}}
\int_{-\pi}^{\pi}\Big(\frac{F_{\Lambda_1}(\cdot)}{F_{\Lambda_1}'(v_m+nM)(\cdot-v_m-nM)}\Big)^{\hat{\,}}(x) e^{-inM/10}\cdot 
e^{iqM/10}e^{i(v_p+qM)x}\,dx\\
&=e^{i(q-n)M/10}\delta_{mp}\delta_{nq}=\delta_{mp}\delta_{nq}\, 
\mbox{ for any }m,p\in\mathbb{Z}_M,\ n,q\in\mathbb{Z}.
\end{align*}

Combining this with \eqref{gs0}, we have
\[
f(x)=\sum_{n\in\mathbb{Z}}\sum_{m\in \mathbb{Z}_{M}}  (c_{m,n}e^{inM/10})\exp(i (v_{m}+nM)x),
\]
with
\[
c_{m,n}=\frac{1}{\sqrt{2\pi}}\int_{-\pi}^{\pi} f(x)\widehat{\phi_{mn}}(x)dx
=\sum_{j\in\mathbb{Z}_M}b_j\frac{1}{\sqrt{2\pi}}\int_{-\pi}^{\pi} e^{ijx}\widehat{\phi_{mn}}(x)dx
=\sum_{j\in\mathbb{Z}_M}b_{j}\phi_{mn}(j).
\]
Using Lemma \ref{Kadec} with $L:=1/4-\epsilon$ and $\tilde{v}_{m+nM}=v_m+nM$, and invoking the inequality $\cos(\pi L)-\sin(\pi L)>(1-4L)=4\epsilon$, we have 
\[
\sum_{n\in \mathbb{Z}}\sum_{m\in \mathbb{Z}_{M}}\big|c_{m,n}e^{
\frac{inM}{10}}\big|^2\!=\!\sum_{n\in \mathbb{Z}}\sum_{m\in \mathbb{Z}_{M}}|c_{m,n}|^2
\!\le\! \frac{\|f\|_{L^{2}([-\pi,\pi])}^{2}}{(\cos(\pi L)-\sin(\pi L))^2}
\!=\! \frac{1}{(\cos(\pi L)-\sin(\pi L))^2}
\le \frac{1}{(4\epsilon)^2}.
\]
Then, 
\[
\big\|[\phi_{mn}(j): m,j\in\mathbb{Z}_M]\bb\big\|^2_2=\sum_{m\in \mathbb{Z}_{M}}\Big|\sum_{j\in\mathbb{Z}_M}b_{j}\phi_{mn}(j)\Big|^2 =\sum_{m\in \mathbb{Z}_{M} }|c_{m,n}|^2\le \frac{1}{(4\epsilon)^2},
\]
which completes the proof.
\end{proof}

The following lemma reduces the estimation of  \(\|\bB_{\sigma,L}\|_2\) to estimating \(V(x)\), \(V_{m}(x)\) and \( F_{\Lambda_1}'(v_m)/F'_{\Lambda}(\lambda_m).\)

\begin{lemma}\label{LLLL27}  Let
\(\Lambda=\{\lambda_k\mid k\in\mathbb Z_M\}\subset[-1/2,M-1/2)\),
\(\Lambda_1=\{v_k\mid k\in\mathbb Z_M\}\subset[-1/2,M-1/2)
\)
be two sets of points that are pairwise distinct modulo $M$,  Let $1\le s\le M$, assume that $ \max_{m\in \mathbb{Z}_{s}}|\lambda_m-v_m|\le M/2$. For any $m\in\mathbb Z_s$, set
\[
V_m(x):=
\prod_{k\in\mathbb Z_M\setminus\{m\}}
\frac{\sin\frac{\pi(x-\lambda_k)}{M}}
     {\sin\frac{\pi(x-v_k)}{M}},
\] 
where common factors are cancelled. Let $T:=\{k\in\mathbb{Z}_{M}\mid  v_k\neq \lambda_k\}$. Assume that 
\(\
\min_{ m\in T\cap\mathbb{Z}_{s}}\operatorname{dist}(v_m,\mathbb Z)\gtrsim 1\) for all \( m\in\mathbb Z_s\).
For any $n\in\mathbb{Z}$, define 
\[
\bC_n\!:=\!\Big[\frac{\lambda_m-v_m}{j-\lambda_m-nM}\frac{j-v_m}{j-v_m-nM}(-1)^{nM}e^{-inM/10}:   m\in \mathbb{Z}_s,j\in\mathbb{Z}_M\Big],
\] 
\[
\bD_n\!:=\![\phi_{mn}(j):  m\in \mathbb{Z}_s,j\in\mathbb{Z}_M], \quad \bD^{*}:=\Big[\sum_{n\in\mathbb{Z}}\phi_{mn}(j):  m\in \mathbb{Z}_s,j\in\mathbb{Z}_M\Big].
\]
For $|n|<2$, define
\[
\bH_n:=
[V_m(j)q_{mj}^{(n)}: {m\in\mathbb Z_s,\,j\in\mathbb Z_M}],
\] 
with
\[
q_{mj}^{(n)}
:=\left\{\begin{array}{ll}
\frac{\lambda_m-v_m}{j-\lambda_m-nM}
\frac{\sin\frac{\pi(j-\lambda_m)}{M}}
     {\sin\frac{\pi(j-v_m)}{M}}, & \text{ if }m\in T,\\
0, &\mbox{ if } m\notin T.
\end{array}\right.
\] 
Then 
\begin{align*}
\| \bI_s\bB_{\sigma,L}\|_2&\lesssim \! \sup_{m\in \mathbb{Z}_s}\Big| 
 \frac{F'_{\Lambda_1}(v_m)}{F'_{\Lambda}(\lambda_m)}\Big|\Big(\sup_{j\in \mathbb{Z}_{M}}|V(j)|\Big(\|\bD^{*}\|_2\! +\Big\|\sum_{|n|\ge 2}\bC_n\circ \bD_0\Big\|_2\Big)\\&+\sup_{m\in \mathbb{Z}_{s}}\operatorname{dist}_M(\lambda_m,v_m)\sup_{j,m\in \mathbb{Z}_{M}}|V_{m}(j)|\sup_{|n|<2}\|\bD_n\|_2\Big).\end{align*}
\end{lemma}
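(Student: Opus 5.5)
We start from the representation \eqref{psimnrewrite}, which for $m\in\mathbb{Z}_s$, $j\in\mathbb{Z}_M$ gives
\[
(\bI_s\bB_{\sigma,L})_{mj}=\frac{F_{\Lambda_1}'(v_m)}{F'_{\Lambda}(\lambda_m)}\,V(j)\sum_{n\in\mathbb{Z}}\frac{j-v_m-nM}{j-\lambda_m-nM}\,\phi_{mn}(j).
\]
The prefactor $F_{\Lambda_1}'(v_m)/F'_{\Lambda}(\lambda_m)$ is a left diagonal scaling, so it costs at most its supremum in spectral norm. We then split
\[
\frac{j-v_m-nM}{j-\lambda_m-nM}=1+\frac{\lambda_m-v_m}{j-\lambda_m-nM}.
\]
The ``$1$'' part yields $\operatorname{diag}(V(j))$ applied on the right of $\bD^*$. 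This is bounded by $\sup_j|V(j)|\,\|\bD^*\|_2$.

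The remainder $\sum_n\frac{\lambda_m-v_m}{j-\lambda_m-nM}\phi_{mn}(j)$ vanishes when $m\notin T$. We treat the ranges $|n|\ge2$ and $|n|<2$ separately.

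\textbf{Case $|n|\ge 2$.} We express $\phi_{mn}$ through $\phi_{m0}$. From \eqref{phimn}, the factor $F_{\Lambda_1}$ is $M$-periodic up to the sign $(-1)^{nM}$, since $\sin(\pi(x-v_k-nM)/M)=(-1)^n\sin(\pi(x-v_k)/M)$ and there are $M$ factors. Likewise $F'_{\Lambda_1}(v_m+nM)=(-1)^{nM}F'_{\Lambda_1}(v_m)$. Hence
\[
\phi_{mn}(j)=\frac{j-v_m}{j-v_m-nM}(-1)^{nM}e^{-inM/10}\phi_{m0}(j).
\]
So the $n$-th remainder term is exactly $(\bC_n\circ\bD_0)_{mj}$. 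Summing over $|n|\ge2$ and applying $\operatorname{diag}(V(j))$ on the right gives the contribution $\sup_j|V(j)|\,\|\sum_{|n|\ge2}\bC_n\circ\bD_0\|_2$.

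\textbf{Case $|n|<2$.} Here we do not pass through $\bD_0$, because the factor $\frac{j-v_m}{j-v_m-nM}$ may be unbounded. Instead we absorb $V$ into the interpolation kernel. Since
\[
V(j)=V_m(j)\,\frac{\sin\frac{\pi(j-\lambda_m)}{M}}{\sin\frac{\pi(j-v_m)}{M}},
\]
the entry becomes
\[
V(j)\frac{\lambda_m-v_m}{j-\lambda_m-nM}\phi_{mn}(j)=V_m(j)\,q^{(n)}_{mj}\,\phi_{mn}(j).
\]
Thus this contribution is $(\bH_n\circ\bD_n)$. We bound it by Lemma~\ref{Lemmarcsingularvalue}:
\[
\|\bH_n\circ\bD_n\|_2\le\min\{\mathsf r_1(\bH_n),\mathsf c_1(\bH_n)\}\,\|\bD_n\|_2 .
\]
It remains to show that $\min\{\mathsf r_1,\mathsf c_1\}(\bH_n)\lesssim \sup_m\operatorname{dist}_M(\lambda_m,v_m)\,\sup_{j,m}|V_m(j)|$.

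For fixed $m\in T$, we pull out $\sup|V_m|$ and $|\lambda_m-v_m|$. The hypothesis $|\lambda_m-v_m|\le M/2$ makes $|\lambda_m-v_m|$ equal to $\operatorname{dist}_M(\lambda_m,v_m)$ after the representative choice. What remains is to bound
\[
\sum_j\left|\frac{\sin\frac{\pi(j-\lambda_m)}{M}}{(j-\lambda_m-nM)\sin\frac{\pi(j-v_m)}{M}}\right|^2 .
\]
The numerator cancels the possible small denominator $j-\lambda_m-nM$: it is at most $\pi|j-\lambda_m-nM|/M$ up to a bounded periodicity constant. The denominator satisfies $|\sin(\pi(j-v_m)/M)|\gtrsim \operatorname{dist}_M(j,v_m)/M\gtrsim \max\{1,|j-v_m|_M\}/M$, using $\operatorname{dist}(v_m,\mathbb Z)\gtrsim1$. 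When $j$ is far from $\lambda_m+nM$, we instead bound the sine ratio by $O(1)$ plus $1/|j-\lambda_m-nM|$ decay. This last step uses the ratio of sines being controlled by $(|j-\lambda_m|_M+1)/(|j-v_m|_M+1)$, and $|j-\lambda_m|_M\le|j-v_m|_M+|\lambda_m-v_m|$.

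Making this row sum $O(1)$, uniformly, is the main obstacle. The factor $|\lambda_m-v_m|$ can be as large as $M/2$, so the naive bound produces an extra factor of size $\operatorname{dist}_M(\lambda_m,v_m)$ in the wrong place. I would split the $j$-sum into $j$ near $v_m$ and $j$ near $\lambda_m$. Near $v_m$, the terms are at most $1/\operatorname{dist}(j,v_m)$, which is square-summable. Near $\lambda_m$, the terms are at most $1/\operatorname{dist}(j,\lambda_m)$, again square-summable. Elsewhere, they are at most $1/|j-\lambda_m|$, also square-summable, so $\mathsf r_1(\bH_n)\lesssim\sup|\lambda_m-v_m|\sup|V_m|$.

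Adding the three contributions via the triangle inequality yields the claimed bound.
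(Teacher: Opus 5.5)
Your reduction is the same as the paper's. You use the same splitting $\frac{j-v_m-nM}{j-\lambda_m-nM}=1+\frac{\lambda_m-v_m}{j-\lambda_m-nM}$ and the same identity $\phi_{mn}(j)=\frac{j-v_m}{j-v_m-nM}(-1)^{nM}e^{-inM/10}\phi_{m0}(j)$, which produces $\bC_n\circ\bD_0$ for $|n|\ge2$. You absorb the $m$-th sine factor of $V$ into $q^{(n)}_{mj}$ in the same way, producing $\bH_n\circ\bD_n$ for $|n|<2$, and you apply Lemma~\ref{Lemmarcsingularvalue} as the paper does.

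The problem is the last step, the row-norm bound for $\bH_n$. The ``main obstacle'' you describe comes only from switching to the cruder ratio bound $(|j-\lambda_m|_M+1)/(|j-v_m|_M+1)$. The splitting you then propose does not give a uniform $O(1)$ bound as written:
\begin{itemize}
\item Near $\lambda_m$ you bound the terms by $1/\operatorname{dist}(j,\lambda_m)$. Nothing in the lemma keeps $\lambda_m$ away from $\mathbb Z$; in the application to Theorem~\ref{T001} the $\lambda_m$ are arbitrary cluster nodes and may even be integers. So $\sum_j\operatorname{dist}(j,\lambda_m)^{-2}$ is not uniformly bounded.
\item ``Elsewhere'' the terms are not $O(1/|j-\lambda_m|)$. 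For $1\ll\operatorname{dist}_M(j,v_m)\ll|\lambda_m-v_m|$, the quantity $|q^{(n)}_{mj}|/|\lambda_m-v_m|$ can be of size $\asymp 1/\operatorname{dist}_M(j,v_m)$, which is much larger. The claim only survives if ``near $v_m$'' means every $j$ closer to $v_m$ than to $\lambda_m$.
\end{itemize}

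No splitting is needed, because the two bounds you wrote first already hold for every $j$ at once. The identity $|\sin\frac{\pi(j-\lambda_m)}{M}|=|\sin\frac{\pi(j-\lambda_m-nM)}{M}|\le\frac{\pi}{M}|j-\lambda_m-nM|$ cancels $1/(j-\lambda_m-nM)$ exactly. Together with $|\sin\frac{\pi(j-v_m)}{M}|\ge\frac{2}{M}\operatorname{dist}_M(j,v_m)$, this gives
\[
|q^{(n)}_{mj}|\le\frac{\pi}{2}\,\frac{|\lambda_m-v_m|}{\operatorname{dist}_M(j,v_m)},\qquad j\in\mathbb Z_M,\ |n|<2,\ m\in T.
\]
Moreover, $\sum_{j\in\mathbb Z_M}\operatorname{dist}_M(j,v_m)^{-2}\le\sum_{\ell\in\mathbb Z}|v_m-\ell|^{-2}=\pi^2/\sin^2(\pi v_m)\lesssim1$, using $\operatorname{dist}(v_m,\mathbb Z)\gtrsim1$. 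The hypothesis $|\lambda_m-v_m|\le M/2$ gives $|\lambda_m-v_m|=\operatorname{dist}_M(\lambda_m,v_m)$. Hence $\mathsf r_1(\bH_n)\lesssim\sup_m\operatorname{dist}_M(\lambda_m,v_m)\sup_{j,m}|V_m(j)|$, with no dependence on where $\lambda_m$ lies. This is exactly how the paper closes the argument, and with this replacement your proof is complete.
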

\begin{proof} Let
\(
\bP_s:=
\begin{bmatrix}\bE_s,{\textbf 0}\end{bmatrix}\in\mathbb C^{s\times M},
\) where $\bE_s$ is an $s$-by‑$s$ idenity matrix.
Then
\(
\|\bP_s\bB_{\sigma,L}\|_2
=
\|\bI_s\bB_{\sigma,L}\|_2,
\)
where the $M$-by-$M$ diagonal matrix $\bI_s$ is defined as in \eqref{eqbIs}.
For any vector $\ba=(a_j: j\in\mathbb{Z}_M)$ with $\|\ba\|_2=1$.  By Lemma \ref{LemmaBsigmaL} and \eqref{psimnrewrite}, we have 
\begin{align*}
\| \bP_s\bB_{\sigma,L}\ba\|_2
&=\Big(\sum_{m\in \mathbb{Z}_{s}}\Big|\sum_{j\in \mathbb{Z}_{M}}a_{j}\sum_{n\in\mathbb{Z}}\psi_{mn}^{*}(j)\Big|^{2}\Big)^{1/2} \\
&\le  \Big( \sup_{m\in \mathbb{Z}_{s}}\Big| \frac{F'_{\Lambda_1}(v_m)}{F'_{\Lambda}(\lambda_m)}\Big|\Big) \Big(\sum_{m\in \mathbb{Z}_{s}}\Big|\sum_{j\in \mathbb{Z}_{M}}a_{j}V(j)\sum_{n\in\mathbb{Z}}\frac{j-v_m-nM}{j-\lambda_m-nM}\phi_{mn}(j)\Big|^{2} \Big)^{1/2} \\
&\le \Big( \sup_{m\in \mathbb{Z}_{s}}\Big| 
 \frac{F'_{\Lambda_1}(v_m)}{F'_{\Lambda}(\lambda_m)}\Big|\Big)  \Big\|\Big[\sum_{n\in\mathbb{Z}}V(j)\frac{j-v_m-nM}{j-\lambda_m-nM}\phi_{mn}(j): m\in \mathbb{Z}_s,j\in\mathbb{Z}_M\Big]\Big\|_2,
\end{align*}
where in the second inequality we have used the fact that $\|\bA \ba\|_2\le \|\ba\|_2\|\bA\|_2=\|\bA\|_2$ for any $\bA\in\mathbb{R}^{s\times M}$ \cite[Theorem 5.6.2 (b)]{Horn2012}. 
Observe that
\[
\frac{j-v_m-nM}{j-\lambda_m-nM}=1+\frac{\lambda_m-v_m}{j-\lambda_m-nM}\mbox{ for all } n\in\mathbb{Z},\ m\in \mathbb{Z}_s,j\in\mathbb{Z}_M.
\]
By \eqref{eqspectral} and Lemma \ref{Lemmarcsingularvalue},  
\begin{align*}
&\|\bP_s\bB_{\sigma,L}\|_2\\
&\le \sup_{m\in \mathbb{Z}_s}\Big| 
 \frac{F'_{\Lambda_1}(v_m)}{F'_{\Lambda}(\lambda_m)}\Big|\Big(\Big\|\Big[V(j)\sum_{n\in\mathbb{Z}}\phi_{mn}(j): m\in \mathbb{Z}_s,j\in\mathbb{Z}_M\Big]\Big\|_2\\
 &\quad+\Big\|\Big[\sum_{n\in\mathbb{Z}}V(j)\frac{\lambda_m-v_m}{j\!-\!\lambda_m\!-\!nM}\phi_{mn}(j): m\in \mathbb{Z}_s,j\in\mathbb{Z}_M\Big]\Big\|_2\Big)\\
&\le \sup_{m\in \mathbb{Z}_s}\Big| 
 \frac{F'_{\Lambda_1}(v_m)}{F'_{\Lambda}(\lambda_m)}\Big|\Big(\sup_{j\in \mathbb{Z}_{M}}|V(j)|\|\bD^{*}\|_2\!+\!\Big\|\Big[\sum_{n\in\mathbb{Z}}V(j)\frac{\lambda_m-v_m}{j\!-\!\lambda_m\!-\!nM}\phi_{mn}(j): m\in \mathbb{Z}_s,j\in\mathbb{Z}_M\Big]\Big\|_2\Big). 
\end{align*}
By the definition of $\phi_{mn}$ in \eqref{phimn},  we have for any $n\in\mathbb{Z}$
\[
\phi_{mn}(j)
=\frac{(-1)^{nM}e^{-\frac{inM}{10}}M\prod_{k\in\mathbb{Z}_M}\sin\frac{\pi(j-v_k)}{M}}{\pi(j-v_m-nM)\prod_{k\in\mathbb{Z}_M\setminus\{m\}}\sin\frac{\pi(v_m-v_k)}{M}}
=\frac{j-v_m}{j-v_m-nM}(-1)^{nM}e^{-\frac{inM}{10}}\phi_{m0}(j).
\]
By the definitions of the set $T$ and the entry $q^{(n)}_{mj}$, we compute for all $|n|<2$
\[
\sqrt{\sum_{j\in \mathbb{Z}_{M}} \Big|V_{m}(j)q_{mj}^{(n)}\Big|^2}=0 \mbox{ for any } m\notin T
\]
and
\begin{align}\label{qmT}
\sqrt{\sum_{j\in \mathbb{Z}_{M}} \Big|V_{m}(j)q_{mj}^{(n)}\Big|^2}
&\le \sup_{j\in \mathbb{Z}_{M}}|V_{m}(j)| \sqrt{\sum_{j\in \mathbb{Z}_{M}} \Big|q_{mj}^{(n)}\Big|^2}\nonumber \\
&\le \sup_{j\in \mathbb{Z}_{M}}|V_{m}(j)| \lambda_m\!-\!v_m|\sqrt{\sum_{j\in \mathbb{Z}_{M}} \Big|\frac{1}{j-\lambda_m-nM}
\frac{\sin\frac{\pi(j-\lambda_m)}{M}}
     {\sin\frac{\pi(j-v_m)}{M}}\Big|^2}\nonumber \\
&\le \sup_{j\in \mathbb{Z}_{M}}|V_{m}(j)| \lambda_m\!-\!v_m|\sqrt{\sum_{j\in \mathbb{Z}_{M}} \Big|\frac{\pi}{2}\frac{1}{\dist(j-v_m,M\mathbb{Z})}\Big|^2}\mbox{ for any }m\in T,
\end{align}
where in the third inequality, we have used $|\sin \pi(j-\lambda_m)/M |=|\sin \pi(j-\lambda_m-nM)/M |\le |j-\lambda_m-nM|\pi/M$ and 
\[
\Big|\sin\frac{\pi(j-v_m)}{M}\Big|\ge 2 \dist\Big(\frac{j-v_m}{M},\mathbb{Z}\Big)=\frac{2}{M}\dist(j-v_m,M\mathbb{Z})
\]
as  \(\
\min_{ m\in T\cap\mathbb{Z}_{s}}\operatorname{dist}(v_m,\mathbb Z)\gtrsim 1\) for all \( m\in\mathbb Z_s\).
Note that the map \( j \mapsto j - v_{\lambda} \pmod{M} \) runs over all residue classes modulo \( M \) and $\mathbb{Z}=\cup_{j\in\mathbb{Z}_m}\{l\in\mathbb{Z}\mid  l=j \pmod{M}\}$. Hence
\begin{equation}\label{qmTbound}
\sum_{j\in\mathbb{Z}_M} \frac{1}{\operatorname{dist}(j - v_{m}, M\mathbb{Z})^2}
\le \sum_{\ell \in \mathbb{Z}} \frac{1}{\operatorname{dist}(v_m, \ell)^2}
=
\sum_{\ell \in \mathbb{Z}} \frac{1}{|v_m - \ell|^2}\lesssim 1,
\end{equation}
where we have used the assumption \(\
\min_{ m\in T\cap\mathbb{Z}_{s}}\operatorname{dist}(v_m,\mathbb Z)\gtrsim 1\) for all \( m\in\mathbb Z_s\). Combining \eqref{qmT} and \eqref{qmTbound}, we have for all $|n|<2$ 
\[
\sqrt{\sum_{j\in \mathbb{Z}_{M}} \Big|V_{m}(j)q_{mj}^{(n)}\Big|^2}\lesssim \sup_{j\in \mathbb{Z}_{M}}|V_{m}(j)|\operatorname{dist}_M(\lambda_m,v_m)\mbox{ for any }m\in T.
\]
As a result, we have for all $|n|<2$ 
\begin{equation}\label{qbound}
\max_{m\in\mathbb{Z}_s}\sqrt{\sum_{j\in \mathbb{Z}_{M}} \Big|V_{m}(j)q_{mj}^{(n)}\Big|^2}\lesssim \sup_{j\in \mathbb{Z}_{M},m\in \mathbb{Z}_s}|V_{m}(j)| \sup_{m\in \mathbb{Z}_{s}}\operatorname{dist}_M(\lambda_m,v_m).
\end{equation}

According to the definitions $\bC_n$, $\bD_n$, and $\bH_n$, it follows that
\begin{align*}
\|\bP_s\bB_{\sigma,L}\|_2
&\le \sup_{m\in \mathbb{Z}_s}\Big| 
 \frac{F'_{\Lambda_1}(v_m)}{F'_{\Lambda}(\lambda_m)}\Big|\Big(\sup_{j\in \mathbb{Z}_{M}}|V(j)|\Big(\|\bD^{*}\|_2\! +\Big\|\sum_{|n|\ge 2}\bC_n\circ \bD_0\Big\|_2\Big)+\sum_{|n|<2}\Big\|\bH_n\circ\bD_n\Big\|_2\Big)\\
 &\lesssim \sup_{m\in \mathbb{Z}_s}\Big| 
 \frac{F'_{\Lambda_1}(v_m)}{F'_{\Lambda}(\lambda_m)}\Big|\Big(\sup_{j\in \mathbb{Z}_{M}}|V(j)|\Big(\|\bD^{*}\|_2\! +\Big\|\sum_{|n|\ge 2}\bC_n\circ \bD_0\Big\|_2\Big)\\
 &\quad+\sup_{m\in \mathbb{Z}_{s}}\operatorname{dist}_M(\lambda_m,v_m)\sup_{j,m\in \mathbb{Z}_{M}}|V_{m}(j)|\sup_{|n|<2}\|\bD_n\|_2\Big),
\end{align*}
where the last inequality follows from Lemma \ref{Lemmarcsingularvalue} and \eqref{qbound}.
The proof is therefore complete.
\end{proof}

The following lemma provides, for any given frequency set $\Lambda$, a careful selection of the corresponding set $\Lambda_1$ in Lemma \ref{LLLL27}.
  
\begin{lemma}\label{lem11}  Let $\Lambda = \{\lambda_k\subset[-1/2,M-1/2]\mid  k\in\mathbb{Z}_M\}$  satisfy the assumption \eqref{AssumptionLambda} and  $0<\epsilon\le \frac{1}{8}$ and $0<  L<1/2$, then there exists $\Lambda_1=\{v_k\subset[-1/4,M-3/4]\mid  k\in\mathbb{Z}_M\}$  such that 
\begin{equation}\label{tiaojian}
\max_{k\in \mathbb{Z}_{M}}\operatorname{dist}_M(v_k,k)\le\frac14\!- \epsilon , \max_{k\in \mathbb{Z}_{M}}\operatorname{dist}_M(\lambda_k,v_k)\le L+\epsilon\!-\!\frac14, \min_{k\in \mathbb{Z}_{M},\lambda_{k}\neq v_{k}}\operatorname{dist}_M(v_k,k)\ge \frac18.
\end{equation}
\end{lemma}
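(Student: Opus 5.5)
The set $\Lambda_1$ can be built entry by entry. Each $\lambda_k$ is either kept, or moved radially toward the integer $k$ until it sits at distance exactly $\tfrac14-\epsilon$ from $k$. This is clipping onto the Kadec-admissible window around $k$.

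\textbf{Reduction to a signed displacement.} For each $k\in\mathbb Z_M$ choose $n_k\in\mathbb Z$ attaining $\operatorname{dist}_M(\lambda_k,k)$, and set $d_k:=\lambda_k-n_kM-k$. Then $|d_k|=\operatorname{dist}_M(\lambda_k,k)<L<\tfrac12$ by \eqref{AssumptionLambda}. Everything below is done with the lifted point $k+d_k$ and reduced modulo $M$ at the end. Every point produced will lie within $\tfrac14-\epsilon<\tfrac12$ of $k$, so $\operatorname{dist}_M$ to $k$ equals ordinary distance.

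\textbf{Definition of $v_k$.} Case (a): if $|d_k|\le\tfrac14-\epsilon$, put $v_k:=\lambda_k$. Case (b): if $|d_k|>\tfrac14-\epsilon$, put $v_k:=k+\operatorname{sign}(d_k)\bigl(\tfrac14-\epsilon\bigr)$. This already lies in $[-\tfrac14+\epsilon,\,M-\tfrac34-\epsilon]\subset[-\tfrac14,M-\tfrac34]$.

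In case (a) we instead take the representative of $\lambda_k$ modulo $M$ lying in $[k-\tfrac14+\epsilon,\,k+\tfrac14-\epsilon]\subset[-\tfrac14,M-\tfrac34]$. This is the same point of $\mathbb R/M\mathbb Z$, so $F_\Lambda$ and the relation $v_k=\lambda_k$ are unaffected.

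\textbf{Checking \eqref{tiaojian}.}
\begin{itemize}
\item First condition. In both cases $\operatorname{dist}_M(v_k,k)\le\tfrac14-\epsilon$.
\item Third condition. If $v_k\ne\lambda_k$ we are in case (b), so $\operatorname{dist}_M(v_k,k)=\tfrac14-\epsilon\ge\tfrac18$, using $\epsilon\le\tfrac18$.
\item Second condition, case (b). Here $\lambda_k$ (lifted) and $v_k$ lie on the same side of $k$. Hence $\operatorname{dist}_M(\lambda_k,v_k)=|d_k|-(\tfrac14-\epsilon)<L+\epsilon-\tfrac14$.
\item Second condition, case (a). The distance is $0$.
\end{itemize}

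\textbf{Side properties.} The $v_k$ are pairwise distinct modulo $M$, since they lie in the disjoint windows $(k-\tfrac14,k+\tfrac14)$. This is what Lemma \ref{LLLL27} and Lemma \ref{27} will need.

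\textbf{Main obstacle.} I expect the only delicate points to be the wrap-around bookkeeping at $k=0$ and $k=M-1$, and the degenerate regime. The wrap-around is handled by working with the lifted displacement $d_k$.

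The degenerate regime is $L<\tfrac14-\epsilon$. There case (b) never occurs and the second condition reads $0\le L+\epsilon-\tfrac14$, which is negative. So the second condition can only be asserted when $L+\epsilon\ge\tfrac14$, which is the regime actually used, $\tfrac14\le L<\tfrac12$. In the proof I would state this explicitly: either restrict to $L+\epsilon\ge\tfrac14$, or interpret the bound as $\max\{0,\,L+\epsilon-\tfrac14\}$.
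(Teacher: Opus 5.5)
Your proposal is correct and uses the same construction as the paper: keep $v_k=\lambda_k$ when $|\lambda_k-k|\le\frac14-\epsilon$, and otherwise clip $\lambda_k$ to $k\pm(\frac14-\epsilon)$ on the same side of $k$, after which all three conditions in \eqref{tiaojian} are immediate. Your remark on the degenerate regime is also right: when $L<\frac14-\epsilon$ the right-hand side of the second condition in \eqref{tiaojian} is negative, so that condition cannot hold. The lemma therefore needs $L+\epsilon\ge\frac14$; this is automatic in the regime $\frac14\le L<\frac12$ where the lemma is applied, and the paper's proof passes over the point without comment.
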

\begin{proof} For each $k\in\mathbb{Z}_M$, we consider two separate cases.

Case 1: If  $|\lambda_k - k| \le \frac{1}{4} - \epsilon$, then we assign $v_k := \lambda_k$. Let $\Lambda_0$ denote the set consisting of all such points $v_k$.
If $\Lambda_0$ is nonempty, then by \eqref{AssumptionLambda},  we have 
$\min_{\lambda_k \notin\Lambda_0} \operatorname{dist}_M(\lambda_k,  \Lambda_{0})\ge \sigma\ge \frac{\sigma}{6}$, $|\lambda_k-v_k|=0$, and 
\[
|v_k-k|=|\lambda_k-k|\le \frac14-\epsilon .
\]

Case 2: If $|\lambda_k-k|> \frac{1}{4}-\epsilon$ and then choose 
\[
v_k:=\left\{\begin{array}{ll}
k + \frac{1}{4} -  \epsilon , &\mbox{ if } \lambda_k>k+\frac{1}{4}-\epsilon,\\
k - \frac{1}{4} +  \epsilon , &\mbox{ if }\lambda_k<k-\frac{1}{4}+\epsilon.
\end{array}
\right.
\]
It follows that $|v_k-k|=\frac{1}{4}-  \epsilon \ge \frac{1}{8}$.
By \eqref{AssumptionLambda}, the definition of $v_k$ and $0\le L<1/2$,  we have
\[
|\lambda_k-v_k| \le  L-\frac{1}{4} +  \epsilon.
\]
The set consisting of all such points $v_k$ is denoted by $\Lambda_{00}$.
Setting $\Lambda_1:=\Lambda_{0}\cup\Lambda_{00}$ completes the proof.
\end{proof}

\begin{corollary}
Let $0\le L<1/2$ and \(0 <\epsilon\le 1/8\).
If $\Lambda$ and $\Lambda_1$ satisfy the conditions of Lemma \ref{lem11}, then 
\begin{equation}\label{eqBsigmaLFinal}
\|\bB_{\sigma,L}\|_2\lesssim  \frac{1}{\epsilon}\Bigg(\sup_{m,j\in \mathbb{Z}_M}\Big(| V(j)|+|V_{m}(j)|\Big)
 \Bigg) \sup_{m\in \mathbb{Z}_M}\Bigg|\frac{F'_{\Lambda_1}(v_m)}{F'_{\Lambda}(\lambda_m)}\Bigg|.
\end{equation}
\end{corollary}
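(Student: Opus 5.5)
We apply Lemma \ref{LLLL27} with $s=M$, so that $\bI_M$ is the identity and the left-hand side is $\|\bB_{\sigma,L}\|_2$. The hypotheses are supplied by Lemma \ref{lem11}. By \eqref{tiaojian}, every $m\in T$ satisfies $\operatorname{dist}(v_m,\mathbb Z)\ge 1/8$. Also $\operatorname{dist}_M(\lambda_m,v_m)\le L+\epsilon-1/4<1/2+1/8\le M/2$ once $M\ge2$. With these in place, the corollary follows once each matrix factor on the right-hand side of Lemma \ref{LLLL27} is bounded by $\lesssim 1/\epsilon$. The remaining factors $\sup|V(j)|$, $\sup|V_m(j)|$ and $\sup|F'_{\Lambda_1}(v_m)/F'_\Lambda(\lambda_m)|$ are kept as they are.

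\emph{The $\bD_n$ terms.} Since $|v_k-k|\le 1/4-\epsilon$, Lemma \ref{27} gives $\|\bD_n\|_2\le 1/(4\epsilon)$ for every $n$, in particular for $|n|<2$. The prefactor $\sup_m\operatorname{dist}_M(\lambda_m,v_m)$ is at most $L+\epsilon-1/4<1$. Hence the third summand is $\lesssim \epsilon^{-1}\sup_{m,j}|V_m(j)|$.

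\emph{The $\bC_n\circ\bD_0$ tail.} Write $\sum_{|n|\ge2}\bC_n\circ\bD_0$ and estimate each term with Lemma \ref{Lemmarcsingularvalue}:
\[
\|\bC_n\circ\bD_0\|_2\le \min\{\mathsf r_1(\bC_n),\mathsf c_1(\bC_n)\}\,\|\bD_0\|_2 .
\]
The entries of $\bC_n$ equal $(\lambda_m-v_m)e^{-inM/10}$ times those of $\widetilde{\widetilde{\bC}}_n$ from Lemma \ref{ExampleCnorm}. Here $|\lambda_m-v_m|<1$, and the unimodular factor $e^{-inM/10}$ does not depend on $m$ or $j$, so neither affects the row or column norms beyond the factor $|\lambda_m-v_m|$. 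Lemma \ref{ExampleCnorm} applies because $\Lambda_1\subset[-1/4,M-3/4]$. It gives $\min\{\mathsf r_1,\mathsf c_1\}\le 1/(\sqrt2(|n|-1)^2)$. The series $\sum_{|n|\ge2}(|n|-1)^{-2}$ converges, so by the triangle inequality and Lemma \ref{27} this term is $\lesssim 1/\epsilon$.

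\emph{The periodized matrix $\bD^*$.} This is the one genuinely new step, and I expect it to be the main obstacle. I would argue exactly as in the proof of Lemma \ref{LemmaBsigmaL}, with $\Lambda$ replaced by $\Lambda_1$. The functions $\phi_{mn}$ are precisely the $\psi^*_{mn}$ built from $\Lambda_1$, so $\bD^*$ is the interpolation matrix $\bB$ associated with $\Lambda_1$. The identity of Lemma \ref{LemmaBsigmaL} then gives $\|\bD^*\|_2=\sqrt M\,\|\bA_{\Lambda_1}^{-1}\|_2$, where $\bA_{\Lambda_1}$ is the Fourier matrix with nodes $\Lambda_1$. That lemma needs the $v_k$ to be pairwise distinct modulo $M$, which holds since $|v_k-k|<1/4$. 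Remark \ref{RemarkKadec} (discrete Kadec, applicable because $|v_k-k|\le 1/4-\epsilon$) then yields $\sqrt M\|\bA_{\Lambda_1}^{-1}\|_2\le 1/(4\epsilon)$. The delicate point is that the pointwise validity of \eqref{gs0} and the convergence of $\sum_n\phi_{mn}(j)$ must hold for $\Lambda_1$ as well. I would justify this by noting that the Appendix A argument uses only the sine-type structure of $F_{\Lambda_1}$, which holds for any set that is separated modulo $M$.

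Collecting the three bounds, the first summand is $\lesssim \epsilon^{-1}\sup_j|V(j)|$ and the second is $\lesssim\epsilon^{-1}\sup_{m,j}|V_m(j)|$. Summing gives \eqref{eqBsigmaLFinal}.
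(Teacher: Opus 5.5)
Your proposal is correct and follows the paper's proof essentially step for step. You apply Lemma \ref{LLLL27} with $s=M$, bound every $\bD_n$ by Lemma \ref{27}, control the tail $\sum_{|n|\ge2}\bC_n\circ\bD_0$ via Lemmas \ref{Lemmarcsingularvalue} and \ref{ExampleCnorm} using $|\lambda_m-v_m|\le 3/8$, and obtain $\|\bD^*\|_2\le 1/(4\epsilon)$ by recognizing $\bD^*$ as the interpolation matrix of $\Lambda_1$ and invoking \eqref{1/4} from Remark \ref{RemarkKadec}; along the way you correctly keep the unimodular factor $e^{-inM/10}$ in $\bC_n$, which the paper's write-up silently drops.
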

\begin{proof} Using Lemma \ref{27}, $\|\bD_n\|_2\le1/(4\epsilon)$ for all $n\in\mathbb{Z}$.
By the definition of $\bC_n$, we have
\[
(\bC_n)_{mj}=(\lambda_m-v_m)(\widetilde{\widetilde{\bC}}_n)_{mj}, |n|\ge2\mbox{ and }(\bE_n)_{mj}=(\lambda_m-v_m)(\widetilde{\bC}_n)_{mj},n=0,-1,1,
\]
for any $m\in T=\{k\in\mathbb{Z}_M\mid \lambda_k\neq v_k\}$ and any $j\in\mathbb{Z}_M$, where the matrices $\widetilde{\widetilde{\bC}}_n$ and $\widetilde{\bC}_n$ are defined as in  Lemmas \ref{ExampleCnorm} and \ref{Cnorm}, respectively.
By Lemma \ref{Lemmarcsingularvalue},  we obtain
\begin{align}\label{sumeq2}
&\sum_{|n|\ge 2}\Big\|\bC_n\circ \bD_0\Big\|_2
\le \sum_{|n|\ge 2}\min\{{\mathsf{r}}_1(\bC_n),{\mathsf{c}}_1(\bC_n)\}\| \bD_0\|_2\\
&\le\frac{1}{4\epsilon} \Big(\sum_{|n|\ge 2}\min\{{\mathsf{r}}_1(\bC_n),{\mathsf{c}}_1(\bC_n)\}\Big)\nonumber\\
&\le\frac{1}{4\epsilon}\frac{3}{8} \Big(\sum_{|n|\ge 2}\min\{{\mathsf{r}}_1(\widetilde{\widetilde{\bC}}_n),{\mathsf{c}}_1(\widetilde{\widetilde{\bC}}_n)\}\Big)\nonumber\lesssim \frac{1}{\epsilon},
\end{align}
where the third inequality follows from the bound $|\lambda_m-v_m|\le L+\epsilon-\frac14\le \frac38$ for any $m\in \mathbb{Z}_{M}$ and the fourth inequality is a direct consequence of Lemmas \ref{Cnorm} and \ref{ExampleCnorm}.
‌Similarly, by Lemma \ref{27}, we have
\[
   \sup_{j,m\in \mathbb{Z}_{M}}|V_{m}(j)|\sup_{|n|<2}\|\bD_n\|_2\lesssim \frac{1}{\epsilon}\sup_{m,j\in \mathbb{Z}_M}\big|V_{m}(j)\big|.
\]
Combining Lemma \ref{LLLL27} and \eqref{sumeq2}, we have
\[
\|\bB_{\sigma,L}\|_2\lesssim  \frac{1}{\epsilon}\Big(\sup_{m,j\in \mathbb{Z}_M}\Big(| V(j)|+|V_{m}(j)|\Big)
 \Big) \sup_{m\in \mathbb{Z}_M}\Bigg|\frac{F'_{\Lambda_1}(v_m)}{F'_{\Lambda}(\lambda_m)}\Bigg|.
\]
It follows from \eqref{1/4} in Remark \ref{RemarkKadec} and \eqref{phimn} that $\|\bD^*\|_2\le1/(4\epsilon)$. 
which yields \eqref{eqBsigmaLFinal}.
\end{proof}

\section{Proofs of Theorem \ref{Theorem} and Corollary \ref{coro}}\label{Section3}

By \eqref{eqBsigmaLFinal}, it remains to estimate $V(j)$, $V_m(j)$, and $F'_{\Lambda_1}(v_m)/F'_{\Lambda}(\lambda_m)$ in \eqref{eqBsigmaLFinal}. To do this, we need the following two simple lemmas.

\begin{lemma}\label{L23} Let $M\ge2$ be an integer and $\mathcal{T}\subset \mathbb{Z}_{M}$. If there exist $0<\epsilon \le\frac18$, $\frac14\le p\le 1/2$, $\sigma'>0$, and $q>0$ such that $\Gamma_1=\{\gamma_k\mid  k\in \mathcal{T}\}\subset\mathbb{R}$ and $\Gamma_2=\{u_k\mid  k\in \mathcal{T}\}\subset\mathbb{R}$ satisfy $\min_{k\in \mathcal{T}}\operatorname{dist}_M(y,\gamma_k)\ge \sigma'$, $\max_{k\in \mathcal{T}}\operatorname{dist}_M(\gamma_k,k)\le q<1/2$, $\max_{k\in \mathcal{T}}\operatorname{dist}_M(u_k,k)\le p<1/2$, and $\max_{k\in \mathcal{T}}\operatorname{dist}_M(u_{k},\gamma_k)\le p-\frac14+\epsilon$. For $y\in \mathbb{R}$, let $\mathcal{T}_1=\Big\{k\in\mathcal{T} \mid \operatorname{dist}_M(y,k)\le q+\frac12\Big\}$. Then
\[ 
\Bigg|\prod_{k\in \mathcal{T}}\frac{\sin\frac{\pi(y-u_k)}{M}}{\sin\frac{\pi(y-\gamma_k)}{M}}\Bigg|
\lesssim \Big(1+\frac{\pi}{2}\Big)^{|\mathcal{T}_1|}\, \Bigl(1+\frac{p+\epsilon-1/4}{\sigma'}\Bigr)^{|\mathcal{T}_1|} e^{ (p+\epsilon-\frac14) (4+\pi)} M^{2p+2\epsilon-1/2}. \]
\end{lemma}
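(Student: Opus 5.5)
We split the product over $\mathcal{T}$ into the near part $\mathcal{T}_1$ and the far part $\mathcal{T}\setminus\mathcal{T}_1$, and bound each factor by elementary sine estimates. Put $\eta:=p+\epsilon-\frac14$, so $\operatorname{dist}_M(u_k,\gamma_k)\le\eta$. Replacing $u_k,\gamma_k$ by representatives modulo $M$ changes each factor only by a sign, so we may assume $y$, $u_k$, $\gamma_k$ are chosen so that all relevant differences are reduced modulo $M$ into $(-M/2,M/2]$, with $|u_k-\gamma_k|\le\eta$.

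\textbf{Near factors ($k\in\mathcal{T}_1$).} Write
\[
\frac{\sin\frac{\pi(y-u_k)}{M}}{\sin\frac{\pi(y-\gamma_k)}{M}}
=\cos\frac{\pi(\gamma_k-u_k)}{M}+\cot\frac{\pi(y-\gamma_k)}{M}\,\sin\frac{\pi(\gamma_k-u_k)}{M}.
\]
Using $|\sin t|\le|t|$, together with $|\cot t|\le 1/|t|\cdot$(const) for $|t|\le\pi/2$ applied with $\operatorname{dist}_M(y,\gamma_k)\ge\sigma'$, we obtain
\[
\Bigl|\frac{\sin\frac{\pi(y-u_k)}{M}}{\sin\frac{\pi(y-\gamma_k)}{M}}\Bigr|\le 1+\frac{\pi}{2}\cdot\frac{\eta}{\sigma'}\le\Bigl(1+\frac{\pi}{2}\Bigr)\Bigl(1+\frac{\eta}{\sigma'}\Bigr).
\]
Taking the product over $\mathcal{T}_1$ gives the first two factors of the bound.

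\textbf{Far factors ($k\notin\mathcal{T}_1$).} Set $d_k:=y-\gamma_k$, reduced modulo $M$. Then $|d_k|\ge \operatorname{dist}_M(y,k)-q>\frac12$, and $|d_k|$ grows roughly like $|y-k|$. The same identity gives
\[
\log|\text{factor}|\le \Bigl|\cot\frac{\pi d_k}{M}\Bigr|\,\frac{\pi\eta}{M}+O\Bigl(\frac{\eta}{|d_k|}\Bigr)^2 .
\]
Since $\cot\frac{\pi d}{M}\approx \frac{M}{\pi d}$, the main term is about $\eta/|d_k|$.

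This is the delicate point. Summing $\eta/|d_k|$ naively yields $\eta\cdot 2\log M$, i.e.\ $M^{2\eta}$, because there are two sides, each contributing a harmonic sum. That is exactly the target exponent $2p+2\epsilon-\frac12=2\eta$, so no cancellation is needed. The plan is therefore:
\begin{enumerate}
\item Order the far indices $k$ by $\operatorname{dist}_M(y,k)$. On each side of $y$, the $\ell$-th index satisfies $|d_k|\ge \ell-\frac12$ roughly (using $q<\frac12$, $\operatorname{dist}_M(y,k)>q+\frac12$, and distinct integers $k$). The comparison must be made carefully via $\operatorname{dist}_M(y,k)-q$.
\item Bound $\sum_\ell \frac{1}{\ell-1/2}$ over $\ell\le M/2$ by $\log M+O(1)$.
\item Handle the first terms, where $|d_k|$ is close to $\frac12$: use the exact bound $|\sin(x+h)/\sin x|\le 1+|h|\,|\cot x|+\dots$ and $\log(1+t)\le t$. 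These contribute a bounded multiple of $\eta$, which is where the factor $e^{\eta(4+\pi)}$ comes from.
\item Control the quadratic error terms $\sum \eta^2/d_k^2=O(\eta^2)$, which are absorbed into the same factor.
\end{enumerate}

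\textbf{Main obstacle.} The key difficulty is keeping the coefficient of $\log M$ exactly $2\eta$ rather than $2\eta\cdot C$. This requires the sharp bound $|\cot(\pi d/M)|\,\pi/M\le 1/|d|$ for $|d|\le M/2$, which follows from $t\cot t\le 1$. It also requires the precise counting $|d_k|\ge \ell-\frac12$ on each side. Any slack in the constant multiplying $\eta/|d_k|$ would ruin the exponent, whereas slack elsewhere costs only absolute constants.

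Multiplying the near and far estimates yields
\[
\Bigl(1+\frac{\pi}{2}\Bigr)^{|\mathcal{T}_1|}\Bigl(1+\frac{\eta}{\sigma'}\Bigr)^{|\mathcal{T}_1|}e^{\eta(4+\pi)}M^{2\eta},
\]
which is the claimed bound.
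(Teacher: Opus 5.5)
Your proposal is correct and follows the paper's proof essentially step for step. It uses the same split of $\mathcal{T}$ at $\operatorname{dist}_M(y,k)\le q+\frac12$, the per-factor bound $1+\eta/\operatorname{dist}_M(y,\gamma_k)$ with $\eta=p+\epsilon-\frac14$, the inequality $\log(1+u)\le u$ on the far indices, and the two-sided harmonic estimate $\sum_{k\in\mathcal{T}_2}1/(t_k-q)\le 4+2\log M$ that produces $M^{2\eta}$.

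The only difference is a minor sharpening. Your identity $\sin(A+B)/\sin A=\cos B+\cot A\,\sin B$, combined with $t\cot t\le 1$, is slightly stronger than the paper's $1/\sin t\le 1+1/t$. It removes the extra $\pi\eta/M$ per factor, which is the source of the paper's $e^{\pi\eta}$. For the same reason, the quadratic error terms you mention are not needed at all.
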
 
\begin{proof}
For each $k\in\mathcal{T}$, we define
$P_k(y):=\frac{\sin\frac{\pi(y-u_k)}{M}}{\sin\frac{\pi(y-\gamma_k)}{M}}$, $y\in\mathbb{R}$. 
Note that $|\sin t|\le|t|$ for all $t\in\mathbb{R}$, $\frac{1}{\sin t}\le 1+\frac{1}{t}$ for any $0<t\le \frac{\pi}{2}$ (as the function $(1+t)\sin(t) -t$ is strictly increasing on $(0,\frac{\pi}{2}]$ with  a function value $0$ at $t=0$) and
\[ 
\sin\frac{\pi(y-u_k)}{M}-\sin\frac{\pi(y-\gamma_k)}{M} =-2\cos\frac{\pi(2y-\gamma_k-u_k)}{2M}\sin\frac{\pi(\gamma_k-u_k)}{2M}. 
\]
Based on the above facts and assumptions, it follows that
\begin{align}\label{eqPky}
|P_k(y)|
=\Bigg|1+\frac{\sin\frac{\pi(y-u_k)}{M}- \sin\frac{\pi(y-\gamma_k)}{M}}{\sin\frac{\pi(y-\gamma_k)}{M}}\Bigg| \nonumber
&\le 1+\frac{\pi \operatorname{dist}_M(\gamma_k,u_k)}{M}\frac{1}{|\sin\frac{\pi(y-\gamma_k)}{M}|} \nonumber\\
&\le 1+\frac{\pi (p+\epsilon-\frac{1}{4})}{M}\Big(1+\frac{M}{\pi\operatorname{dist}_M(y,\gamma_k)}\Big) \nonumber \\
&= 1+\frac{p+\epsilon-\frac14}{\operatorname{dist}_M(y,\gamma_k)}+\frac{\pi (p+\epsilon-\frac14)}{M}.
\end{align} 
Define $t_k:=\operatorname{dist}_M(y,k)$ for each $k\in\mathcal{T}$.
From $\operatorname{dist}_M(\gamma_k,k)\le q$ and the triangle inequality, we get $\operatorname{dist}_M(y,\gamma_k)\ge\operatorname{dist}_M(y,k)-\operatorname{dist}_M(k,\gamma_k)\ge t_k-q$. Partition the index set $\mathcal{T}$ into two subsets as follows:
\[ 
\mathcal{T}_1=\Big\{k\in\mathcal{T} \mid t_k\le q+\frac12\Big\}\mbox{ and } \mathcal{T}_2=\Big\{k\in\mathcal{T}\mid t_k>q+\frac12\Big\}. 
\] 

\textbf{Case 1:} For $k\in \mathcal{T}_1$, by $|y-\gamma_k|>\sigma'$, $p+\epsilon-\frac14<\frac{M}{2}$ and \eqref{eqPky}, we have 
\[
|P_k(y)|\le 1+\frac{p+\epsilon-\frac14}{\sigma'}+\frac{\pi}{2} \le\Bigl(1+\frac{p+\epsilon-\frac14}{\sigma'}\Bigr)\Big(1+\frac{\pi}{2}\Big). 
\]
Then
\begin{equation}\label{283} 
\prod_{k\in\mathcal{T}_1}|P_k(y)| \le \Big(1+\frac{\pi}{2}\Big)^{|\mathcal{T}_1|}\Bigl(1+\frac{p+\epsilon-1/4}{\sigma'}\Bigr)^{|\mathcal{T}_1|} . \end{equation} 

\textbf{Case 2:} For $k\in\mathcal{T}_2$, we have $\operatorname{dist}_M(y,\gamma_k)\ge t_k-q>\frac12$ and
$\frac{p+\epsilon-\frac14}{\operatorname{dist}_M(y,\gamma_k)}\le\frac{p+\epsilon-\frac14}{t_k-q}$. 
Using $\log(1+u)\le u$ for any $u\ge0$ and \eqref{eqPky}, we arrive at 
\[ 
\log|P_k(y)|\le\frac{p+\epsilon-\frac14}{t_k-q}+\frac{\pi (p+\epsilon-\frac14)}{M}. 
\] 
Summing over $k\in\mathcal{T}_2$  yields 
\begin{equation}\label{eqsumL}
\sum_{k\in\mathcal{T}_2}\log|P_k(y)| 
\le \pi( p+\epsilon-\frac{1}{4})+(p+\epsilon-\frac{1}{4})\sum_{k\in\mathcal{T}_2}\frac{1}{t_k-q}.
\end{equation}
Note that 
\begin{equation}\label{eqsumL1}
\sum_{k\in\mathcal{T}_2}\frac{1}{t_k-q}\le2\sum_{m=0}^{\lfloor \frac{M}{2}\rfloor}\frac{1}{m+1/2}\le 4+2\log M. 
\end{equation}
Combining \eqref{eqsumL} and \eqref{eqsumL1}, we have
\begin{equation}\label{284} 
\prod_{k\in\mathcal{T}_2}|P_k(y)| \lesssim e^{(p+\epsilon-\frac14)(4+\pi)}\,M^{2p+2\epsilon-\frac12}. 
\end{equation} 
Multiplying \eqref{283} and \eqref{284} gives the desired inequality. 
\end{proof}

\begin{lemma}\label{L25}
Let \(M\geq 2\) be an integer and $0<\eta<1/2$. Assume that $|v_k-k|<\frac14$ holds for all $k\in \mathbb{Z}_{M}$. If there exist $0<d\le\frac{M}{2}$, $\eta>0$, and an index $j\in\mathbb{Z}_{M}$ such that $x\in\mathbb{R}$ satisfies $\operatorname{dist}_M(x,v_{j})\le d$ and $\min_{k\in\mathbb{Z}_M,k\neq j}\operatorname{dist}_M(x,v_k)>\eta$, then
\begin{equation}\label{Prodeq}
\Big|\prod_{k\in\mathbb{Z}_M\setminus\{j\}}
\frac{
\sin\frac{\pi(v_{j}-v_k)}{M}}{\sin\frac{\pi(x-v_k)}{M}}
\Big|
\lesssim \frac{ (1+d)^3}{\eta}.
\end{equation}
\end{lemma}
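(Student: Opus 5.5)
The strategy is to compare each factor with its unperturbed counterpart, in which $v_k$ is replaced by the integer $k$. For the unperturbed product there is the exact identity $\prod_{k\neq 0}\sin(\pi k/M)=M/2^{M-1}$, and more generally $\prod_{k\in\mathbb Z_M}\sin(\pi(x-k)/M)=\pm 2^{1-M}\sin(\pi x)$. So, up to perturbations, the left-hand side of \eqref{Prodeq} behaves like
\[
\frac{M\,\dist(x-v_j, M\mathbb Z)}{M\,|\sin(\pi(x-j))|}\cdot\frac{\pi}{M}\cdot\frac{M}{\pi}\approx\frac{\dist_M(x,j)}{\dist(x,\mathbb Z)}.
\]
The numerator of this ratio is at most about $1+d$, and the denominator should be comparable to $\min_{k\ne j}\dist_M(x,v_k)\gtrsim\eta$ once the perturbations are controlled. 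The point of the proof is therefore to show that replacing $k$ by $v_k$, with $|v_k-k|<1/4$, costs at most polynomial factors in $1+d$.

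First I write the quantity as the ratio of two products,
\[
\prod_{k\ne j}\frac{\sin\frac{\pi(v_j-v_k)}{M}}{\sin\frac{\pi(j-k)}{M}}\cdot\prod_{k\ne j}\frac{\sin\frac{\pi(j-k)}{M}}{\sin\frac{\pi(x-k')}{M}}\cdot\prod_{k\neq j}\frac{\sin\frac{\pi(x-k')}{M}}{\sin\frac{\pi(x-v_k)}{M}},
\]
where $k'$ is the integer nearest $x$ in the appropriate shifted copy of the grid. Concretely, I set $x=j+t$ with $|t|\lesssim d+1$ and compare $x-k$ with $j-k$. The middle product is explicit through the $\sin(\pi x)$ identity:
\[
\Bigl|\prod_{k\ne j}\sin\tfrac{\pi(j-k)}{M}\Big/\prod_{k\ne j}\sin\tfrac{\pi(x-k)}{M}\Bigr| = \Bigl|\frac{M\sin(\pi(x-j)/M)}{\sin(\pi x)}\Bigr|\lesssim \frac{1+d}{|\sin\pi x|}.
\]

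The two perturbation products are handled factorwise, as in the proof of Lemma \ref{L23}. I use $\sin(a+h)/\sin a=\cos h+\cot a\,\sin h$, and for far indices I bound $\log|P_k|$ by $|h|/\dist$ plus a second-order term. The obstacle is that the first-order terms $\sum_k (v_k-k)/(j-k)$ do not sum absolutely to $O(1)$; they give $\log M$ growth, which would produce a power $M^{c}$ as in Lemma \ref{L23}. I would resolve this by pairing. The first and third products combine into
\[
\prod_{k\ne j}\frac{\sin\frac{\pi(v_j-v_k)}{M}\,\sin\frac{\pi(x-k)}{M}}{\sin\frac{\pi(j-k)}{M}\,\sin\frac{\pi(x-v_k)}{M}}.
\]
In this form the first-order terms in $\delta_k:=v_k-k$ become $\delta_k\bigl[\frac{1}{j-k}-\frac{1}{x-k}\bigr]$ plus a term from $v_j-j$. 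The bracket is $O\bigl((1+d)/(j-k)^2\bigr)$ for $|k-j|\gg 1+d$, so the tail is summable. The remaining term $(v_j-j)\sum_k 1/(j-k)$ is a symmetric harmonic sum and cancels to $O(1)$ mod-$M$-symmetrically. Second-order terms are $O(1/(j-k)^2)$ and are also summable.

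For the near indices $|k-j|\lesssim 1+d$ there are $O(1+d)$ factors. Here I bound each factor crudely but carefully. A naive bound such as $(1+\pi/2)^{1+d}$ would be exponential in $d$, so I instead use telescoping along consecutive $k$, or equivalently compare with the product over $k'$ with $v_k$ replaced by $k$. Each near factor is within a constant of $|x-k|/|x-v_k|$, and the product of these ratios is $\lesssim (1+d)^2/\eta$. The single smallest factor $|x-v_{k_0}|\ge\eta$ absorbs $1/|\sin\pi x|$, since $\dist(x,\mathbb Z)\ge \dist(x,v_{k_0})-1/4$ may be small, which is why the comparison is done with $x-v_{k_0}$ and not $x-k_0$. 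Making this near-field estimate polynomial in $d$ rather than exponential is the step I expect to be hardest. Collecting the pieces gives $(1+d)\cdot(1+d)^2/\eta\cdot O(1)=(1+d)^3/\eta$.
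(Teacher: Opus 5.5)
Your strategy is viable and genuinely different from the paper's, but the near-field step does not work as you phrased it. Below is a comparison of the two routes, then the concrete repair.

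\textbf{Comparison with the paper.} The paper never compares with the integer grid. It recenters at $v_j$: with $S(t)=|\sin(\pi t/M)|$, $y=\dist_M(x,v_j)$, and $w_k\equiv v_{j+k}-v_j \pmod M$ normalized so that $|w_k-k|<\frac12$, the product becomes $\prod_{k=1}^{M-1}S(w_k)/S(w_k-y)$. The same perturbed point thus sits in numerator and denominator. Because $t\mapsto\sin\frac{\pi t}{M}/\sin\frac{\pi(y-t)}{M}$ increases on $(0,y)$ and $t\mapsto\sin\frac{\pi t}{M}/\sin\frac{\pi(t-y)}{M}$ decreases on $(y,M)$, every $w_k$ except the two or three nearest $y$ can be replaced by the worst half-integer $k\pm\frac12$. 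The resulting product is evaluated exactly via $\prod_{k=0}^{M-1}|\sin(\pi(k+a)/M)|=2^{1-M}|\sin\pi a|$ and is $O(1)$; the local factors give $(1+y)^3/\eta$. So the first-order cancellation you engineer by pairing is built in, with no log-expansion or remainder control.

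Your version isolates the exact grid quantity $M|\sin(\pi(x-j)/M)|/|\sin\pi x|$ and treats $\delta_k=v_k-k$ perturbatively. The far-field part is correct:
\begin{itemize}
\item the $\delta_j$-term $\sum\cot(\pi(j-k)/M)$ vanishes over any index set symmetric about $j$;
\item the paired terms satisfy $\frac{\pi\delta_k}{M}\bigl[\cot\frac{\pi(x-k)}{M}-\cot\frac{\pi(j-k)}{M}\bigr]=O\bigl((1+d)/\dist_M(j,k)^2\bigr)$.
\end{itemize}
This makes transparent why no power of $M$ appears, at the price of more bookkeeping.

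\textbf{The near-field step.} A factor ``within a constant'' for each of the $\asymp 1+d$ near indices multiplies to $C^{1+d}$, which is exactly the exponential loss you wanted to avoid. Moreover, the near factors $\sin\frac{\pi(v_j-v_k)}{M}/\sin\frac{\pi(j-k)}{M}$ of your first product are not of the form $|x-k|/|x-v_k|$ at all.

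No telescoping is needed. The identity you already wrote, $\sin(a+h)/\sin a=\cos h+\cot a\sin h$, together with $|\cot a|\le 1/\dist(a,\pi\mathbb Z)$, gives $|\sin(a+h)/\sin a|\le 1+|h|/\dist(a,\pi\mathbb Z)$ with constant exactly $1$. Hence:
\begin{itemize}
\item each near factor of the first product is at most $1+\frac{1}{2\dist_M(j,k)}$;
\item each near factor of the third product with $k\neq k_0$, where $k_0$ is the integer nearest $x$ (so $\dist_M(x,v_k)\ge\dist_M(k,k_0)-\frac34$), is at most $1+\frac{1}{4\dist_M(k,k_0)-3}$.
\end{itemize}
Over distances $1,\dots,D\asymp 1+d$ these products are $\lesssim D$ and $\lesssim D^{1/2}$ respectively.

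The $k_0$ factor must be kept together with $1/|\sin\pi x|$; this also removes the $0\cdot\infty$ in your splitting when $x\in\mathbb Z$. For $k_0\neq j$ one gets $|\sin\frac{\pi(x-k_0)}{M}|/(|\sin\pi x|\,|\sin\frac{\pi(x-v_{k_0})}{M}|)\le\frac{\pi}{4\eta}$. For $k_0=j$ the grid ratio is at most $\pi/2$. Altogether you obtain $(1+d)^{5/2}/\eta$.

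The sharp constant matters here. With a lossy bound of the form $1+\frac{\pi}{2}\frac{|h|}{\dist}$, as used elsewhere in the paper, the exponent would become $1+\frac{3\pi}{4}>3$.
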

\begin{proof} Its proof is given in Appendix C.
\end{proof}

\bigskip

{\textbf{Proof of Theorem \ref{Theorem}}}.

Recall that $F_{\Lambda}$ and $F_{\Lambda_1}$ are defined as in \eqref{FLambdax}.
Then, 
\[
|F_{\Lambda}'(\lambda_m)| = \frac{\pi}{M}\Bigg| \prod_{k\in\mathbb{Z}_M\setminus\{m\}} \sin \frac{\pi(\lambda_m \!-\! \lambda_k)}{M} \Bigg|
\text{ and }
|F_{\Lambda_1}'(v_m)| = \frac{\pi}{M}\Bigg| \prod_{k\in\mathbb{Z}_M\setminus\{m\}} \sin \frac{\pi(v_m \!-\! v_k)}{M} \Bigg|.
\]
Notice that 
\[
\Bigg|\frac{F_{\Lambda_1}'(v_m)}{F_{\Lambda}'(\lambda_m)}\Bigg|
\!=\!\Bigg|\prod_{k\in\mathbb{Z}_M\setminus\{m\}}
\frac{\sin\frac{\pi(v_m-v_k)}{M}}{
\sin\frac{\pi(\lambda_m-\lambda_k)}{M}}
\Bigg|\!=\!\Bigg|\prod_{k\in\mathbb{Z}_M\setminus\{m\}}
\frac{\sin\frac{\pi(\lambda_m-v_k)}{M}}{
\sin\frac{\pi(\lambda_m-\lambda_k)}{M}}
\Bigg|\Bigg|\prod_{k\in\mathbb{Z}_M\setminus\{m\}}
\frac{\sin\frac{\pi(v_m-v_k)}{M}}{
\sin\frac{\pi(\lambda_m-v_k)}{M}}
\Bigg|.
\]
Recalling \eqref{tiaojian} in Lemma \ref{lem11} and  choosing $\mathcal{T}=\mathbb{Z}_M\setminus\{m\}$, $\Gamma_1=\Lambda$, $\Gamma_2=\Lambda_1$, $y=\lambda_m$, $\sigma'=1-2L$, and $q=L$ in Lemma \ref{L23}, we have 
\begin{align*}
\Bigg|\prod_{k\in\mathbb{Z}_M\setminus\{m\}}
\frac{\sin\frac{\pi(\lambda_m-v_k)}{M}}{
\sin\frac{\pi(\lambda_m-\lambda_k)}{M}}
\Bigg| 
 \lesssim\Bigl(1+\frac{L+\epsilon-1/4}{1-2L}\Bigr) 
    e^{(L+\epsilon-\frac14)(4+\pi)}\,M^{2L+2\epsilon-\frac12}. 
\end{align*}
By Lemma \ref{L25} with $\eta=1/4$ (since \(0<L<1/2\), $\min_{j,j\neq m }\operatorname{dist}_{M}(\lambda_m,v_j)\ge 1/4$) and $d=L$, 
\[
\Bigg|\prod_{k\in\mathbb{Z}_M\setminus\{m\}}
\frac{\sin\frac{\pi(v_m-v_k)}{M}}{
\sin\frac{\pi(\lambda_m-v_k)}{M}}
\Bigg|
\lesssim 1.
\]
Therefore,
\begin{align}\label{gs2222222}
\Bigg|\frac{F_{\Lambda_1}'(v_m)}{F_{\Lambda}'(\lambda_m)}\Bigg|
&\lesssim \Big(1+\frac{\pi}{2}\Big)^{2\lfloor L \rfloor+1}\Bigl(1+\frac{L+\epsilon-1/4}{1-2L}\Bigr)^{2\lfloor L \rfloor+1}
 e^{(L+\epsilon-\frac14)(4+\pi)}M^{2L+2\epsilon-\frac12}\nonumber\\
&\lesssim\Bigl(1+\frac{L+\epsilon-1/4}{1-2L}\Bigr)M^{2L+2\epsilon-\frac12},
 \end{align}
where we have used $\lfloor L \rfloor=0$ and $L+\epsilon-\frac14\le \frac12+\frac18-\frac14=\frac38$.
Recall \eqref{tiaojian} in Lemma \ref{lem11}. By choosing $\mathcal{T}=\{k\in\mathbb{Z}_M\mid  \lambda_k\ne v_k\}$, $\Gamma_1=\Lambda_1$, $\Gamma_2=\Lambda$, $y=j$, $q=1/4$, $p=L$, $\sigma'=1/8$ in Lemma \ref{L23}, we have 

\begin{equation}
|V(j)|=\Bigg|\frac{F_{\Lambda}(j)}{F_{\Lambda_1}(j)}\Bigg|
 =\Bigg|\prod_{k\in \mathcal{T}}
\frac{\sin\frac{\pi(j-\lambda_k)}{M}}{
\sin\frac{\pi(j-v_k)}{M}}
\Bigg|\lesssim \Bigl(1+\frac{L+\epsilon-1/4}{1/8}\Bigr)
 e^{(L+\epsilon-\frac14)(4+\pi)}\,M^{2L+2\epsilon-\frac12}\nonumber
\end{equation}
where we have used $\lfloor L \rfloor=0$ and $L+\epsilon-\frac14\le\frac38$.
Similarly,   
\[
|V_m(j)|\lesssim (L+1)M^{2L+2\epsilon-1/2}.
\]
Then 
\begin{equation}\label{VsV}
|V_m(j)|+|V(j)|\lesssim (L+1)M^{2L+2\epsilon-1/2}.
\end{equation}
Combining \eqref{VsV} and \eqref{gs2222222}, we have 
\begin{equation}\label{zuizhongsss} 
 \sup_{j,m\in \mathbb{Z}_M}\Big( |V(j)|+|V_{m}(j)|\Big)\Bigg|
 \frac{F'_{\Lambda_1}(v_m)}{F'_{\Lambda}(\lambda_m)}\Bigg|\lesssim \Bigl(1+\frac{L+\epsilon-1/4}{1-2L}\Bigr)
 M^{4L+4\epsilon-1}.
 \end{equation} 
 By Lemma \ref{LemmaBsigmaL}, \eqref{eqBsigmaLFinal}, and \eqref{zuizhongsss}, 
 \begin{align*}
\|\bA_{\sigma,L}^{-1}\|_2=M^{-\frac12}\|\bB_{\sigma,L}\|_2
&\lesssim   \frac{1}{\epsilon} \Bigl(1+\frac{L+\epsilon-1/4}{1-2L}\Bigr)
 M^{4L+4\epsilon-3/2},
 \end{align*}
Choosing $\epsilon=\frac{\log 2}{8}\frac{1}{\log M}\le \frac18$, the above inequality yields the desired conclusion.

\bigskip

{\textbf{Proof of Corollary \ref{coro}}}.
The proof of the corollary is inspired by the proof of Theorem 3.1 in \cite{yu2023on}.
Since $f$ has $a$ derivatives,  by the Jackson theorem (see, for instance \cite[Theorem 41]{Meinardus1967}), there exists an $N$-order trigonometric polynomial \( p\)  such that $\|f-p\|_{L^{\infty}([-\pi,\pi])}\lesssim N^{-a}$. Let 
\[
q(x)=p(x)-\tilde{t}_N(x):=\sum_{j=-N}^{N} a_j \exp(- ijx).
\]
Let $\bA_{L}$ be defined as in \eqref{DefAL}. Note that the vector \(\bq:=(q(\tilde{x}_k): |k|\le N)^{\top}=\bA_{L}(a_k: |k|\le N)^{\top}\)
and $(\sum_{|k|\le N} |a_k|^2)^{1/2}=\|q\|_{L^{2}([-\pi,\pi])}$.

By \eqref{AL},  we have 
$\|\bA_{L}^{-1}\|_{2}\lesssim \frac{N^{4L-\frac{3}{2}}\log M}{1-2L}$ and the $L^2$-based Marcinkiewicz–Zygmund-type inequality 
\[
   \|q\|_{L^{2}([-\pi,\pi])}=\|\bA_{L}^{-1}\bq\|_2\le \|\bA_{L}^{-1}\|_2\|\bq\|_2 \lesssim \frac{N^{4L-3/2}\log M}{1-2L}  \|\bq\|_2\lesssim \frac{N^{4L-a-1}\log M}{1-2L},
\]
where we have used the fact that 
\begin{align*}
\|\bq\|_2\le \sqrt{2N+1}\max_{|k|\le N}|p(\tilde{x}_k)-\tilde{t}_N(\tilde{x}_k)|
&=\sqrt{2N+1}\max_{|k|\le N}|p(\tilde{x}_k)-f(\tilde{x}_k)|\\
&\le \sqrt{2N+1}\|p-f\|_{L^{\infty}([-\pi,\pi])}\lesssim \sqrt{2N+1} N^{-a}.
\end{align*}
Consequently, we have
\begin{align*}
  \|f  -  \tilde{t}_N \|_{L^{2}([-\pi,\pi])}
&\le  \|f  -  p \|_{L^{2}([-\pi,\pi])}+ \| q \|_{L^{2}([-\pi,\pi])}\\
&\lesssim N^{-a}+ \frac{N^{4L-a-1}\log M}{1-2L} \\
&\lesssim  \frac{N^{4L- a-1}\log M}{1-2L},
\end{align*}
where we have used the inequality $\|f  -  p \|_{L^{2}([-\pi,\pi])}\lesssim \|f  -  p \|_{L^{\infty}([-\pi,\pi])}$.
Therefore, 
\[
 |I - \tilde{I}_N|\lesssim  \|f  -  \tilde{t}_N \|_{L^{2}([-\pi,\pi])}\lesssim \frac{N^{4L- a-1}\log M}{1-2L}.
\]

\section{Proof of Theorem \ref{T001}}\label{Section4}

To prove Theorem \ref{T001}, we first establish two auxiliary lemmas. The first lemma asserts that for any separated point set, one can construct two families of near-integer auxiliary points and one family of compensation points in the same interval so that \eqref{ddengshi} holds and a certain rational product of sine functions at each original node is bounded only in terms of the separation distance and the number of given points.

\begin{lemma}\label{L31}
Let \(k\ge1\), \(0<\delta\le1\), and let
$A=b_1<b_2<\cdots<b_k=B$ be a sequence of points in $\bR$ satisfying the minimal separation condition
\[
    \min_{i\ne j}\operatorname{dist}_{M}(b_i,b_j)>\delta
\]
where $M>0$ is an integer. Define the intervals
\[
 K:=[A-k,B+k],\quad
 J:=\Big[A-k-\frac34,B+k+\frac34\Big],
\]
and assume that
\begin{equation}\label{eq:local-length}
B-A<M/2\quad \text{and} \quad  |J|=B-A+2k+\frac32\le\frac{9M}{10}.
\end{equation}
Then there exist pairwise distinct integers
\[
    n_1,n_2,\dots,n_k,\quad m_1,m_2,\dots,m_k
    \in K\cap\mathbb{Z},
\]
and three families of real numbers
\[
    u_1,u_2,\dots,u_k,\quad \gamma_1,\gamma_2,\dots,\gamma_k,
    \quad c_1,c_2,\dots,c_k
    \in J,
\]
satisfying the following properties.
\begin{itemize}
\item[(i)] For each $i=1,2,\dots,k$
\[
    u_i\in\Big(n_i-\frac18,n_i+\frac18\Big),
    \quad
    \gamma_i\in\Big(m_i-\frac18,m_i+\frac18\Big).
\]
Moreover, there exists an absolute constant \(\rho_0>0\) such that
\[
    \rho_0<\operatorname{dist}(u_i,\mathbb Z)<\frac18,
    \quad
    \rho_0<\operatorname{dist}(\gamma_i,\mathbb Z)<\frac18.
\]
All points $u_i$ and $\gamma_i$ are mutually distinct, and their reference integers $n_i$ and $m_i$ are likewise mutually distinct.

\item[(ii)] There exists a number $\theta$ with $|\theta|<5/8$ such that
\begin{equation}\label{eq:common-shift}
 c_i=\gamma_i+\theta,\quad i=1,2,\dots,k.
\end{equation}
The points $c_i$ are non-integers, pairwise separated modulo $M$ by an
absolute positive constant, and disjoint from
$\{b_1,b_2,\dots,b_k\}$. Moreover,
\[
 |u_i-b_i|\le k,
 \quad |c_i-\gamma_i|<\frac58,
 \quad i=1,2,\dots,k.
\]

\item[(iii)] There exists an absolute constant \(c_0>0\) such that
\[
    \operatorname{dist}_M(b_t,u_i)>\frac{c_0}{k},
    \quad
    \operatorname{dist}_M(b_t,\gamma_i)>\frac{c_0}{k}
    ,\quad
    \operatorname{dist}_M(b_t,c_i)>\frac{c_0}{k}
\]
for all $t,i=1,2,\dots,k$, and
\begin{equation}\label{ddengshi}
    \sum_{i=1}^k(b_i-u_i)
    =
    \sum_{i=1}^k(\gamma_i-c_i).
\end{equation}

\item[(iv)] For every $t=1,2,\dots,k$, the following product estimate holds:
\begin{equation}\label{Leq0}
\Bigg|
\prod_{i=1}^{k}
\frac{\sin\frac{\pi(b_t-\gamma_i)}{M}}
     {\sin\frac{\pi(b_t-c_i)}{M}}
\prod_{\substack{1\le i\le k\\ i\neq t}}
\frac{\sin\frac{\pi(b_t-u_i)}{M}}
     {\sin\frac{\pi(b_t-b_i)}{M}}
\Bigg|
\le
C\Big(\frac{C}{\delta}\Big)^{k-1},
\end{equation}
where \(C>0\) is an absolute constant.
\end{itemize}
\end{lemma}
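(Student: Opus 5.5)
The plan is an explicit construction followed by a product estimate. Since \(B-A<M/2\) and \(|J|\le 9M/10\), everything happens inside one window of length less than \(M\). On such a window \(\dist_M\) agrees with the ordinary distance up to a harmless wrap, and \(|\sin(\pi t/M)|\asymp |t|/M\) for \(|t|\le 9M/10\) in the sense needed. So I will work on the real line and treat each sine quotient as a ratio of linear factors times a factor \(1+O(\text{distance}/M)\). The many such factors will be controlled by the same telescoping and \(\log\) bookkeeping as in Lemma \ref{L23}.

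\textbf{Construction.}
\begin{itemize}
\item \emph{Integers \(n_i\).} Assign to each \(b_i\) a distinct integer \(n_i\) with \(|n_i-b_i|\le k-1\) and \(n_1<\dots<n_k\). A greedy monotone assignment works, taking \(n_i\) to be the nearest still-free integer to the right.
\item \emph{Integers \(m_i\).} Take distinct integers \(m_i\in K\) disjoint from the \(n_i\). For example, pick them from \([A-k,A-1]\cup[B+1,B+k]\) and the unused integers near the cluster. \(K\) has at least \(2k+1\) integers, so there is room.
\item \emph{Points \(u_i,\gamma_i\).} Set \(u_i=n_i+\epsilon_i\) and \(\gamma_i=m_i+\epsilon_i'\) with \(|\epsilon_i|,|\epsilon_i'|\in(\rho_0,1/8)\). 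Choose the signs and sizes so that every \(u_i,\gamma_i\) stays at distance \(\ge c_0/k\) from all \(b_t\). This is a pigeonhole argument: each unit interval contains at most \(O(1/\delta)\) points \(b_t\), but the forbidden set has total measure at most \(k\cdot 2c_0/k=2c_0\). So with \(c_0\) small, a good offset exists in the annulus \(\rho_0<|\epsilon|<1/8\).
\item \emph{Shift \(\theta\).} Define \(\theta\) by \eqref{ddengshi}, that is \(k\theta=\sum(u_i-b_i)\). This forces \(c_i=\gamma_i+\theta\). For \(|\theta|<5/8\) we need \(|\sum(u_i-b_i)|<5k/8\). This follows by choosing the \(n_i\) balanced around \(b_i\), rounding alternately left and right so that the partial sums of \(n_i-b_i\) stay bounded. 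The \(\epsilon_i\) then contribute at most \(k/8\).
\item \emph{Genericity of \(c_i\).} Finally, perturb \(\theta\), or equivalently the \(\epsilon\)'s, slightly so that the \(c_i\) avoid the integers and the \(b_t\) by \(c_0/k\). The \(c_i\) inherit mutual separation from the integer spacing of the \(\gamma_i\) provided the \(\epsilon_i'\) are equal or nearly equal.
\end{itemize}

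\textbf{Estimate (iv).} Pair the factors and write each ratio as \(1+(\text{shift})/(b_t-\cdot)\).
\begin{itemize}
\item The product \(\prod_i (b_t-\gamma_i)/(b_t-c_i)\) equals \(\prod_i\big(1-\theta/(b_t-c_i)\big)^{-1}\). The factors with \(|b_t-c_i|\gtrsim 1\) contribute \(\exp(\theta\sum_i 1/(b_t-c_i)+O(1))\). The \(O(1)\) nearby factors are bounded by \(k/c_0\) each, which is at most \(C/\delta\) because \(k\delta\lesssim B-A+1\).
\item In \(\prod_{i\ne t}(b_t-u_i)/(b_t-b_i)\), the numerator factors are \(\lesssim |b_t-b_i|+k\). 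The denominators are bounded below via the separation, giving \(|b_t-b_i|\ge |t-i|\delta\). Naively this gives \(\prod_{i\ne t}(k+|t-i|\delta)/(|t-i|\delta)\), which is far too big.
\end{itemize}

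The key identity is \eqref{ddengshi}. Taking logs, the first-order terms \(\sum_i (u_i-b_i)/(b_t-b_i)\) and \(\theta\sum_i 1/(b_t-c_i)\) must cancel up to \(O(1)\) for far indices. This holds because all these points lie in the same interval of length \(O(k)\), while the sum of shifts is exactly matched.

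\textbf{Main obstacle.} The hard step is achieving \(C(C/\delta)^{k-1}\) rather than \(k^{O(k)}/\delta^{k-1}\). My approach is to bound the product \(\prod_{i\ne t}|b_t-u_i|\) directly. The \(u_i\) sit near distinct integers, so this product is \(\lesssim \prod_{i\ne t}(|n_t-n_i|+1)\), roughly \(\lesssim C^k\,(t-1)!\,(k-t)!\) in the worst case. The denominator \(\prod_{i\ne t}|b_t-b_i|\ge\delta^{k-1}(t-1)!\,(k-t)!\) then cancels the factorials. The remaining \(\gamma/c\) factor is \(\le C^k\) by the \(\theta\)-cancellation above. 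Making the numerator estimate rigorous requires the monotone, distinct-integer choice of the \(n_i\), which ensures \(|n_t-n_i|\lesssim |t-i|+|b_t-b_i|\). The \(|b_t-b_i|\) part is harmless because \((|b_t-b_i|+|t-i|)/|b_t-b_i|\le 1+1/\delta\). This is where I expect to spend the most care.
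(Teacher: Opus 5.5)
Your construction is close to the paper's, but the argument you give for (iv) rests on a false claim, and the balancing needed for $|\theta|<\frac58$ is never actually constructed. In (iv), the cancellation you assert between $\sum_i(u_i-b_i)/(b_t-b_i)$ and $\theta\sum_i 1/(b_t-c_i)$ does not hold, for three reasons. First, the quotients $(u_i-b_i)/(b_t-b_i)$ can be of size $k/\delta$, so there is no first-order expansion. Second, the points need not lie in an interval of length $O(k)$, since only $B-A<M/2$ is assumed. Third, already for $b_i=i$, $u_i=b_i+\epsilon$, $t=1$ and all $m_i$ to the left of $A$, the two logarithmic sums are each about $+\epsilon\log k$, so they add rather than cancel.

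No cancellation is needed: \eqref{ddengshi} plays no role in (iv) and is used only for the far blocks in Lemma~\ref{L32}. Since the $c_i$ are $\frac34$-separated and $|\theta|<\frac58$, at most one quotient $|\sin\frac{\pi(b_t-\gamma_i)}{M}/\sin\frac{\pi(b_t-c_i)}{M}|$ is large. That one is $O(k)$ because $\operatorname{dist}_M(b_t,c_i)>c_0/k$, and the others multiply to at most $\prod_{l\le k}(1+C/l)\le Ck^C$. Your step ``$k/c_0\le C/\delta$ because $k\delta\lesssim B-A+1$'' is wrong when $B-A$ is large, but a single factor $k$ is absorbed into $C^k$ anyway.

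For the second product in \eqref{Leq0}, the bound you dismiss as far too big is exactly the paper's proof, and it already suffices. Using $|u_i-b_i|\le k$, $|b_t-b_i|>\delta|t-i|$ and $\delta\le 1$,
\[
\prod_{i\ne t}\Big(1+\frac{k}{\delta|t-i|}\Big)\le\delta^{-(k-1)}\Big[\prod_{j=1}^{k-1}\Big(1+\frac kj\Big)\Big]^2=\delta^{-(k-1)}\binom{2k-1}{k-1}^2\le 16^k\delta^{-(k-1)},
\]
so there is no $k^{O(k)}$ loss. Your replacement route does not avoid this estimate. The bound $|b_t-u_i|\lesssim|n_t-n_i|+1$ fails factor by factor, because $|b_t-n_t|$ can be of order $k$. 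Recovering it as a product bound with a $C^k$ loss requires precisely the binomial estimate above.

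For $|\theta|<\frac58$ you need distinct increasing $n_i$ with $|\sum_i(n_i-b_i)|$ small. ``Rounding alternately left and right'' cannot deliver this when several $b_i$ share a unit interval. If $b_1<\dots<b_k$ all lie in $(0,1)$, strict monotonicity already forces $n_3\ge n_1+2$. Keeping $|\sum_i(n_i-b_i)|<k/2$ then forces the whole block to be centred, roughly $n_i=i-\lceil k/2\rceil$. Your two descriptions are also incompatible. The greedy ``nearest free integer to the right'', on which your bound $|n_t-n_i|\lesssim|t-i|+|b_t-b_i|$ relies, has drift $\sum_i(n_i-b_i)\approx k^2/2$ for such a tight cluster.

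The paper sets $\ell_i=\lfloor b_i\rfloor-(k-i)$ and $r_i=\lceil b_i\rceil+i-1$ and observes $\sum_i\ell_i\le\sum_i b_i\le\sum_i r_i$. It then raises $(\ell_i)$ one unit at a time, preserving strict monotonicity, until $\sum_i n_i$ equals the integer nearest $\sum_i b_i$. This gives $|\sum_i(n_i-b_i)|\le\frac12$ and $|n_i-b_i|<k$, hence $|\theta|\le\frac1{2k}+\frac18$. Translating your greedy sequence by the integer nearest its mean drift would also work, and would keep your difference bound. With either choice and the direct estimate above, the rest of your plan coincides with the paper's proof: offsets of $u_i,\gamma_i$ from the integers in $(\rho_0,\frac18)$, pigeonhole for the $c_0/k$ avoidance, and $c_i=\gamma_i+\theta$.
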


\begin{proof} Throughout the proof, $C$ denotes an absolute constant whose value may change from line to line.
Fix \(0<\rho_0<10^{-2}\). All perturbations are chosen from intervals
$[n\pm10\rho_0,n\pm11\rho_0]$, so each point is non-integral, within
$1/8$ of an integer, and at distance at least $\rho_0$ from all integers.
There exists an absolute constant $c_0>0$ such that every interval
$I\subset\mathbb R$ of length $\rho_0$ contains a point $x$ satisfying
\begin{equation}\label{Leq1}
 \operatorname{dist}_M(x,b_t)>\frac{c_0}{k}
 \quad\text{for all }1\le t\le k.
\end{equation}
Indeed, the $c_0/k$-neighbourhoods of the $k$ points $b_t$ cover total
length at most $2c_0$, so taking $c_0<\rho_0/4$ leaves a point outside
their union.

Since $\operatorname{dist}_M(a,b)\le|a-b|$, the modular separation
assumption implies $|b_{j+1}-b_j|>\delta$ for every $j$. Consequently,
\begin{equation}\label{Leq2}
 |b_t-b_i|
 =\sum_{j=\min(t,i)}^{\max(t,i)-1}(b_{j+1}-b_j)
 >\delta|t-i|.
\end{equation}

We now construct the reference integers in a balanced way. For each $i=1,2,\dots,k$, define
\[
 \ell_i:=\lfloor b_i\rfloor-(k-i)
 \mbox{  and  }
 r_i:=\lceil b_i\rceil+i-1.
\]
Both sequences are strictly increasing, $\ell_i,r_i\in K\cap\mathbb Z$,
and
\[
 \sum_{i=1}^k\ell_i\le\sum_{i=1}^k b_i
 \le\sum_{i=1}^k r_i.
\]
Let $T$ be an integer nearest to $\sum_{i=1}^k b_i$. There exist strictly
increasing integers $n_i$ satisfying
\[
 \ell_i\le n_i\le r_i,
 \quad
 \sum_{i=1}^k n_i=T.
\]
Indeed, starting from $\ell_i$ and increasing by one the largest
coordinate that can still be increased preserves strict ordering and
produces every integer sum between $\sum_{i=1}^k \ell_i$ and $\sum_{i=1}^k r_i$.
Hence
\begin{equation}\label{eq:small-total-drift}
 \Big|\sum_{i=1}^k(n_i-b_i)\Big|\le\frac12
 \mbox{ and } |n_i-b_i|<k.
\end{equation}

Choose $u_i$ in one of the intervals
$(n_i-11\rho_0,n_i-10\rho_0)$ or
$(n_i+10\rho_0,n_i+11\rho_0)$, taking the side pointing toward $b_i$
when $n_i\ne b_i$. Using \eqref{Leq1}, we can impose
$\operatorname{dist}_M(u_i,b_t)>c_0/k$ for every $t$, while retaining
$|u_i-b_i|\le k$. Set
\[
 \theta:=\frac1k\sum_{i=1}^k(u_i-b_i).
\]
By \eqref{eq:small-total-drift} and $|u_i-n_i|<1/8$,
\[
 |\theta|\le\frac{1}{2k}+\frac18\le\frac58.
\]
A harmless change of one $u_i$ makes the last inequality strict.

The interval $K$ has length at least $2k$ and therefore contains at
least $2k$ integers. After removing $n_1,\dots,n_k$, choose distinct
integers $m_1,\dots,m_k\in K\cap\mathbb Z$. For each $i$, choose
$\gamma_i$ from
\[
 (m_i-11\rho_0,m_i-10\rho_0)
 \cup(m_i+10\rho_0,m_i+11\rho_0)
\]
so that both $\gamma_i$ and $\gamma_i+\theta$ have distance larger than
$c_0/k$ from every $b_t$, and $\gamma_i+\theta\notin\mathbb Z$. This is
possible after decreasing $c_0$, since the forbidden subsets have total
length at most $4c_0$. Define $c_i:=\gamma_i+\theta$. Then
\eqref{eq:common-shift}, \eqref{ddengshi}, and the distance estimates in
(iii) follow immediately. Moreover,
\[
 |c_i-c_j|=|\gamma_i-\gamma_j|\ge1-22\rho_0>1-\frac{22}{100}\ge\frac34
\]
for distinct reference integers in the chosen lift. Together with
\eqref{eq:local-length}, this also gives an absolute lower bound for the modular separation of the $c_i$. This proves (i)--(iii).

To prove conclusion (iv), fix $t\in\{1,2,\dots,k\}$.
Note that
\begin{equation}\label{sincbound}
\frac{2}{\pi}|\theta|
\le \sin|\theta|
\le |\theta|\;
\mbox{ for any }
0\le|\theta|\le\frac{\pi}{2}.
\end{equation}
Using the separation condition $\min_{i\neq j}|c_i-c_{j}|\gtrsim 1$, \eqref{sincbound}, the 1‑Lipschitz continuity of the sine function, 
and the distance estimate in (iii), we have
\begin{align}\label{eq:gamma-c-product}
 \prod_{i=1}^k
 \Bigg|\frac{\sin\frac{\pi(b_t-\gamma_i)}M}
              {\sin\frac{\pi(b_t-c_i)}M}\Bigg|
&= \prod_{i=1}^k\Bigg|\frac{\sin\frac{\pi\operatorname{dist}_M(b_t,\gamma_i)}{M} }
              {\sin\frac{\pi\operatorname{dist}_M(b_t,c_i)}{M}}\Bigg|
\le \prod_{i=1}^k\Bigg|1+\frac{ \frac{\pi\operatorname{dist}_M(\gamma_i,c_i)}{M} }
              {\sin\frac{\pi\operatorname{dist}_M(b_t,c_i)}{M}}\Bigg|  \nonumber\\           
&\le\prod_{i=1}^k
\Bigg|1+ \frac{\pi}{2} \frac{ |\theta|}
          {\operatorname{dist}_M(b_t,c_i)}\Bigg|\nonumber\\
&\lesssim k\prod_{i\in \mathbb{Z}\cap[1,k]\backslash \{i\mid |b_t-c_i|<1/8\} }
 \Big|1+\frac{1}
              {\operatorname{dist}_M(b_t,c_i)}\Big|
 \le C^k,
\end{align}
where we have used the fact $\frac{\pi }{2}|\theta|\le\frac{\pi }{2}\frac{5}{8}<1$.

Using $|u_i-b_i|\le k$ and \eqref{Leq2}, \eqref{sincbound} gives for $i\ne t$,
\[
\Bigg|
 \frac{\sin\frac{\pi(b_t-u_i)}M}
      {\sin\frac{\pi(b_t-b_i)}M}
 \Bigg|
 \le \frac{\frac{\pi|b_t-u_i|}M}{\frac{2}{\pi}\frac{\pi|b_t-b_i|}M}
=\frac{\pi}{2}\frac{|b_t-b_i+b_i-u_i|}{|b_t-b_i|}
 \le  \frac{\pi}{2}\Big(1+\frac{k}{\delta|t-i|}\Big).
\]

Since the same distance $|t-i|$ can occur on both sides of $t$,
\[
 \prod_{i\ne t}\Big(1+\frac{k}{\delta|t-i|}\Big)
 \le \delta^{-(k-1)}
 \Bigg[\prod_{j=1}^{k-1}\Big(1+\frac{k}{j}\Big)\Bigg]^2=\delta^{-(k-1)}\binom{2k-1}{k-1}^{\!2}
 \le \delta^{-(k-1)}16^k.
\]
Combining this estimate with \eqref{eq:gamma-c-product}, and enlarging
$C$, proves \eqref{Leq0}.
\end{proof}

\begin{lemma}\label{L32} Let  $0<\delta\le1$.
Let \(M\) be a positive integer, and let
\[
 [A_j,B_j]\subset[0,M),\quad 1\le j\le r,
\]
be closed intervals ordered cyclically from left to right. Write
\[
 A_j=b_{j,1}<b_{j,2}<\cdots<b_{j,k_j}=B_j,
\]
and set
\[
 s:=\sum_{j=1}^r k_j,
 \quad
 \kappa:=\max_{1\le j\le r}k_j.
\]
Assume that $s<M/2$ and 
\[
 \operatorname{dist}_M(b_{j,i},b_{q,p})>\delta
 \quad
 \text{for} \quad  (j,i)\ne(q,p).
\]
Using the cyclic convention \(A_{r+1}:=A_1+M\), suppose that,
for every \(j\),
\begin{equation}\label{eq:new-local-size}
B_j-A_j<M/2 ,\quad B_j-A_j+2k_j+\frac32\le\frac{9M}{10},
\end{equation}
and
\begin{equation}\label{eq:new-gap}
 A_{j+1}-B_j\ge k_j+k_{j+1}+2.
\end{equation}

Let
\[
 \mathcal B
 :=
 \{b_{j,i}:1\le j\le r,\ 1\le i\le k_j\}.
\]
Then there exist two sets of \(M\) pairwise distinct points modulo \(M\),
\[
 \Lambda=\{\lambda_m\mid m\in\mathbb Z_M\},
 \quad
 \Lambda_1=\{v_m\mid m\in\mathbb Z_M\},
\]
which may be indexed so that
\[
 \mathcal B=\{\lambda_m\mid m\in\mathbb Z_s\},
\]
and the following properties hold.

\begin{itemize}
\item[(i)]
For every \(\nu\in\mathbb Z_M\), the set \(\Lambda_1\) contains
exactly one point in
\[
 \Big(\nu-\frac18,\nu+\frac18\Big)\pmod M.
\]
Moreover, for some absolute constant \(c>0\),
\begin{equation}\label{eq:L32-near-grid}
 c<\operatorname{dist}(v,\mathbb Z)<\frac18,
 \quad v\in\Lambda_1,
\end{equation}
and
\begin{equation}\label{eq:L32-separation}
 \operatorname{dist}_M(\mathcal B,\Lambda_1)
 \ge\frac{c}{\kappa},
 \quad
 \operatorname{dist}_M(\lambda_m,v_m)\le \kappa,
 \quad m\in\mathbb Z_s.
\end{equation}

\item[(ii)]
Define
\[
 V(x)
 :=
 \prod_{k\in\mathbb Z_M}
 \frac{\sin\frac{\pi(x-\lambda_k)}{M}}
      {\sin\frac{\pi(x-v_k)}{M}},
 \quad
 V_m(x)
 :=
 \prod_{k\in\mathbb Z_M\setminus\{m\}}
 \frac{\sin\frac{\pi(x-\lambda_k)}{M}}
      {\sin\frac{\pi(x-v_k)}{M}},
\]
where common factors are canceled. Then
\begin{equation}\label{eq:L32-V-bound}
 \max_{x\in\mathbb Z_M}|V(x)|
 +
 \max_{\substack{m\in\mathbb Z_s\\x\in\mathbb Z_M}}
 |V_m(x)|
 \le C^\kappa,
\end{equation}
where \(C>0\) is an absolute constant.

\item[(iii)]
For every \(m\in\mathbb Z_s\),
\begin{equation}\label{eq:L32-derivative-bound}
 \Big|
 \frac{F_{\Lambda_1}'(v_m)}
      {F_\Lambda'(\lambda_m)}
 \Big|
 \le
 C\Big(\frac{C}{\delta}\Big)^{\kappa-1},
\end{equation}
where
\[
 F_\Gamma(x)
 :=
 \prod_{\gamma\in\Gamma}
 \sin\frac{\pi(x-\gamma)}{M}.
\]
\end{itemize}
\end{lemma}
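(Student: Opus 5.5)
The plan is to build $\Lambda$ and $\Lambda_1$ cluster by cluster from Lemma \ref{L31}, and then to fill the rest of the circle with grid-aligned points that cancel in $V$.

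\textbf{Construction.} For each cluster $j$, apply Lemma \ref{L31} to $b_{j,1}<\dots<b_{j,k_j}$. The hypotheses \eqref{eq:local-length} are exactly \eqref{eq:new-local-size}. This produces:
\begin{itemize}
\item distinct integers $n_{j,i},m_{j,i}\in K_j=[A_j-k_j,B_j+k_j]$;
\item near-integer points $u_{j,i}$ and $\gamma_{j,i}$;
\item compensation points $c_{j,i}=\gamma_{j,i}+\theta_j$, all lying in $J_j$.
\end{itemize}
The gap condition \eqref{eq:new-gap} gives $A_{j+1}-k_{j+1}-(B_j+k_j)\ge 2$. Hence the intervals $K_j$ are pairwise disjoint with at least one free integer between them, and the enlarged intervals $J_j$ (which add only $3/4$ on each side) remain disjoint modulo $M$.

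Now define the two sets as follows.
\begin{itemize}
\item $\Lambda$ consists of the $b_{j,i}$ (indexed first, $m\in\mathbb Z_s$), the $c_{j,i}$, and, for every integer $\nu$ not of the form $n_{j,i}$ or $m_{j,i}$, a common point $w_\nu\in(\nu-\frac18,\nu+\frac18)$ chosen at distance $\ge c$ from $\mathbb Z$.
\item $\Lambda_1$ consists of $v=u_{j,i}$ paired with $\lambda=b_{j,i}$, of $\gamma_{j,i}$ paired with $c_{j,i}$, and of $w_\nu$ paired with itself.
\end{itemize}
The count works out to $M$ points in each set, because exactly $2s$ integers are used by the clusters (and $s<M/2$).

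\textbf{Part (i).} Each integer carries exactly one point of $\Lambda_1$, so the first claim holds. The bound \eqref{eq:L32-near-grid} follows from Lemma \ref{L31}(i) together with the choice of $w_\nu$. For \eqref{eq:L32-separation}:
\begin{itemize}
\item Points of $\Lambda_1$ coming from the same cluster are at distance $\ge c_0/k_j$ from the $b$'s, by Lemma \ref{L31}(iii).
\item Points coming from other clusters, and the fillers $w_\nu$, lie outside $K_j$ or at distance $\ge$ an absolute constant. The $w_\nu$ can in addition be chosen away from $\mathcal B$ by the same measure argument as \eqref{Leq1}.
\item Finally, $|u_{j,i}-b_{j,i}|\le k_j\le\kappa$.
\end{itemize}

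\textbf{Part (ii): the bound on $V$.} After cancelling the common $w_\nu$ factors,
\[
V(x)=\prod_j \prod_i \frac{\sin\frac{\pi(x-b_{j,i})}{M}\sin\frac{\pi(x-c_{j,i})}{M}}{\sin\frac{\pi(x-u_{j,i})}{M}\sin\frac{\pi(x-\gamma_{j,i})}{M}} .
\]
For $x$ far from $J_j$, expand the logarithm of each cluster's factor. The first-order terms are proportional to
\[
\sum_i(u_{j,i}+\gamma_{j,i}-b_{j,i}-c_{j,i})\cot\frac{\pi(x-\cdot)}{M},
\]
and their leading part vanishes by \eqref{ddengshi}. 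What remains is a second-order error of size $\lesssim k_j^2\cdot k_j/\operatorname{dist}(x,J_j)^2$, which is summable over clusters because the clusters are spaced at least $k_j+k_{j+1}$ apart.

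For $x$ near the $j$-th cluster and its two neighbours, bound each factor directly. The integer $x$ is at distance $\gtrsim1$ from every point of $\Lambda_1$, since those points sit at distance $\ge c$ from $\mathbb Z$. The numerators are at most $\lesssim$ (distance $+\kappa$), and a crude product estimate in the style of Lemma \ref{L23} gives $C^{k_j}$. $V_m$ is handled identically after removing one factor.

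\emph{This is the main obstacle:} making the cancellation quantitative and uniform in $r$. In particular, one must show that $\sum_j k_j^3/\operatorname{dist}(x,J_j)^2$ is bounded using only the local gap condition \eqref{eq:new-gap}. My first attempt would be to group clusters dyadically by distance from $x$ and use that the gaps grow with $k_j$.

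\textbf{Part (iii).} Write
\[
\frac{F'_{\Lambda_1}(v_m)}{F'_\Lambda(\lambda_m)}=\prod_{k\ne m}\frac{\sin\frac{\pi(\lambda_m-v_k)}{M}}{\sin\frac{\pi(\lambda_m-\lambda_k)}{M}}\cdot\prod_{k\ne m}\frac{\sin\frac{\pi(v_m-v_k)}{M}}{\sin\frac{\pi(\lambda_m-v_k)}{M}} .
\]
The second product is $\lesssim C^\kappa$ by Lemma \ref{L25}, applied with $d=\kappa$ and $\eta\gtrsim1/\kappa$. Strictly, this needs the version of the lemma for points within $\frac18$ of integers; the constant $(1+d)^3/\eta$ is only polynomial in $\kappa$ and is absorbed into $C^\kappa$.

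In the first product:
\begin{itemize}
\item The factors from the own cluster are exactly \eqref{Leq0}, giving $C(C/\delta)^{\kappa-1}$.
\item The factors from the other clusters form the product $V$-type expression evaluated at $x=b_{j,t}$, which is controlled by the same cancellation argument as in (ii), now with $b_{j,t}$ at distance $\ge k_j+k_{j+1}+2$ from the other clusters.
\end{itemize}
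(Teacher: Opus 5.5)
Your construction (Lemma \ref{L31} on each block, self-paired fillers at the $M-2s$ unused integers), part (i), and part (iii) via \eqref{Leq0} for the own block plus Lemma \ref{L25} for the second product all match the paper's argument. The gap is the one you flagged, and it is more than an unfinished summation. With your split (own cluster and its two neighbours treated factor by factor, all other clusters by first-order cancellation), the quantity you want to control is genuinely not $O(\kappa)$.

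For a concrete failure, take $x=b_{j,1}$ with $k_j=k_{j+1}=1$, $k_{j+2}=\kappa$, and equality in \eqref{eq:new-gap}. Then $A_{j+2}=x+\kappa+7$, so $J_{j+2}$ begins at distance $25/4$ from $x$. Your error term $k_{j+2}^3/\operatorname{dist}(x,J_{j+2})^2$ is then of order $\kappa^3$, which exponentiates to $e^{c\kappa^3}$ rather than $C^\kappa$. Put $\Phi_x(y)=\log|\sin\frac{\pi(x-y)}{M}|$ and let $\mathcal D_q(\Phi_x)$ be the sum over $p$ of $\Phi_x(u_{q,p})-\Phi_x(b_{q,p})+\Phi_x(\gamma_{q,p})-\Phi_x(c_{q,p})$. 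Even the sharp bound $|\mathcal D_q(\Phi_x)|\le 2k_q^2\operatorname{osc}_{J_q}(\Phi_x')$ only gives something of order $\kappa^2$ for this block. Dyadic grouping cannot repair this unless the directly treated region has radius $\gtrsim\kappa$. Separately, your per-block bound $k_j^3/\operatorname{dist}^2$ tacitly assumes $|J_j|\lesssim k_j$, but nothing in the statement bounds the diameter $B_j-A_j$ by $k_j$.

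The missing idea is to put the near/far threshold at scale $\kappa$, not at adjacent clusters. The paper calls a block near $a$ if it lies within $\operatorname{dist}_M$-distance $10\kappa$ of $a$.

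\emph{Near blocks.} By \eqref{eq:new-gap}, near blocks carry total mass $\sum_{q\ \mathrm{near}}k_q\le C\kappa$. Each of their factors is bounded by an absolute constant. For $x=b_{j,i}$ and $q\ne j$, one has $\operatorname{dist}_M(x,b_{q,p})\ge k_q+2$ while $|u_{q,p}-b_{q,p}|\le k_q$. Also $\operatorname{dist}_M(x,c_{q,p})\ge\frac12$ while $|\gamma_{q,p}-c_{q,p}|<\frac58$. At integers one uses the ordering bound \eqref{eq:L32-ordering-product} instead. So near blocks cost only $C^\kappa$.

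\emph{Far blocks.} Write $\mathcal D_q(\Phi_a)$ as integrals of $\Phi_a'-\mathrm{const}$ over the segments $[b_{q,p},u_{q,p}]$ and $[c_{q,p},\gamma_{q,p}]$; the balance \eqref{ddengshi} is what lets you subtract the constant. This gives $|\mathcal D_q(\Phi_a)|\le 2\kappa^2\int_{J_q}|\Phi_a''|$. Now sum using only the disjointness of the $J_q$, with no bound on their lengths: $\sum_{q\ \mathrm{far}}|\mathcal D_q(\Phi_a)|\lesssim\kappa^2\int_{8\kappa}^{M-8\kappa}\big(t^{-2}+(M-t)^{-2}\big)\,dt\lesssim\kappa$.

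The same repair is needed for the other-cluster factors at $x=b_{j,t}$ in your part (iii), and for $V$ and $V_m$ at integers in part (ii). A minor point: the fillers $w_\nu$ must also avoid the $c_{j,i}$, so that $\Lambda$ really consists of $M$ distinct points.
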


\begin{proof}
Throughout the proof, \(C,c>0\) are absolute constants  whose value may change from line to line. We divide the argument into three steps.

\medskip
\noindent
\textbf{Step 1. Construction.}
For each \(1\le j\le r\), let
\[
 K_j=[A_j-k_j,B_j+k_j],
 \quad
 J_j=\Big[A_j-k_j-\frac34,B_j+k_j+\frac34\Big].
\]
For \(1\le j\le r\), define the \(j\)-th block by
\[
\mathcal B_j
=
\{b_{j,i}:1\le i\le k_j\}
\subset [A_j,B_j].
\]
By \eqref{eq:new-gap}, we have
\[
 \inf J_{j+1}-\sup J_j\ge\frac12,
 \quad
 \inf K_{j+1}-\sup K_j\ge2.
\]
 Hence the intervals \(J_j\) are cyclically
disjoint, and the integer classes selected by Lemma~\ref{L31} in
different blocks are distinct modulo \(M\).

Applying Lemma~\ref{L31} to every block, we obtain points
\(u_{j,i},w_{j,i},c_{j,i}, 1\le j\le r, 1\le i\le k_j\) and distinct integer classes
\(n_{j,i},m_{j,i}, 1\le j\le r, 1\le i\le k_j\) such that
\[
 u_{j,i}\in\Big(n_{j,i}-\frac18,n_{j,i}+\frac18\Big),
 \quad
 w_{j,i}\in\Big(m_{j,i}-\frac18,m_{j,i}+\frac18\Big),
\]
\begin{equation}\label{eq:L32-local-data}
 \rho_0<\operatorname{dist}(u_{j,i},\mathbb Z)<\frac18,
 \rho_0<\operatorname{dist}(w_{j,i},\mathbb Z)<\frac18,
 |u_{j,i}-b_{j,i}|\le k_j,
 |w_{j,i}-c_{j,i}|<\frac58.
\end{equation}
Moreover,
\begin{equation}\label{eq:L32-balance}
 \sum_{i=1}^{k_j}(u_{j,i}-b_{j,i})
 +\sum_{i=1}^{k_j}(w_{j,i}-c_{j,i})=0,
\end{equation}
and
\begin{equation}\label{eq:L32-local-separation}
 \operatorname{dist}_M
 \bigl(\mathcal B,\{u_{j,i},w_{j,i},c_{j,i}\mid  1\le j\le r, 1\le i\le k_j\}\bigr)
 \gtrsim\frac{1}{\kappa}.
\end{equation}
The points \(c_{j,i}\) are pairwise separated modulo \(M\) by an
absolute constant and are disjoint from \(\mathcal B\).

Let \(\mathcal N:=\{n_{j,i}\mid 1\le j\le r, 1\le i\le k_j\}\cup\{m_{j,i}\mid  1\le j\le r, 1\le i\le k_j\} \), then $|\mathcal N|=2s$. Let 
\(\mathcal R=\mathbb Z_M\setminus\mathcal N\).  For each
\(l\in\mathcal R\), choose
\[
 \mu_l\in\Big(l+\frac1{16},l+\frac18\Big)\pmod M
\]
so that
\begin{equation}\label{eq:L32-filling}
 \operatorname{dist}_M(\mu_l,\mathcal B)\ge\frac{c}{\kappa},
 \quad
 \mu_l\notin\{c_{j,i}\mid  1\le j\le r, 1\le i\le k_j\}.
\end{equation}
Indeed, this interval meets at most one block and hence at most
\(\kappa\) forbidden \(c/\kappa\)-neighbourhoods, whose total length
is at most \(2c<1/16\).

Now define
\[
 \Lambda
 =\mathcal B\cup\{c_{j,i}\mid  1\le j\le r, 1\le i\le k_j\}\cup\{\mu_l\mid l\in\mathcal R\},
\]
\[
 \Lambda_1
 =\{u_{j,i}\mid  1\le j\le r, 1\le i\le k_j\}\cup\{w_{j,i}\mid  1\le j\le r, 1\le i\le k_j\}
  \cup\{\mu_l\mid l\in\mathcal R\}.
\]
Both sets contain \(M\) distinct points modulo \(M\).  Pair
\[
 b_{j,i}\leftrightarrow u_{j,i},
 \quad
 c_{j,i}\leftrightarrow w_{j,i},
 \quad
 \mu_l\leftrightarrow\mu_l,
\]
and index the pairs so that
\(\mathcal B=\{\lambda_m\mid m\in\mathbb Z_s\}\).  Thus every integer
class contains exactly one point of \(\Lambda_1\) within \(1/8\), and
\eqref{eq:L32-local-data}, \eqref{eq:L32-local-separation}, and
\eqref{eq:L32-filling} prove~\emph{(i)}.

\medskip
\noindent
\textbf{Step 2. A common block estimate.}
For \(\phi\in C^1(I)\), where \(I\) contains the \(q\)-th block and
its auxiliary points, set
\[
 \mathcal D_q(\phi)
 =\sum_{p=1}^{k_q}\bigl[\phi(u_{q,p})-\phi(b_{q,p})\bigr]
 +\sum_{p=1}^{k_q}\bigl[\phi(w_{q,p})-\phi(c_{q,p})\bigr].
\]
By \eqref{eq:L32-balance}, subtracting any constant from \(\phi'(I)\)
and integrating along the above segments gives
\begin{equation}\label{eq:L32-balanced-estimate}
 |\mathcal D_q(\phi)|=\bigg|\sum_{p=1}^{k_q} \int_{b_{q,p}}^{u_{q,p}}(\phi'(t)-C)dt +\sum_{p=1}^{k_q} \int_{c_{q,p}}^{w_{q,p}}(\phi'(t)-C)dt\bigg|
 \le2k_q^2\operatorname{osc}_I(\phi'),
\end{equation}
where 
\[
\operatorname{osc}_{I}(f)
:=
\sup_{x,y\in I}|f(x)-f(y)|.
\]
We also use
\begin{equation}\label{eq:L32-sine-ratio}
 \Big|
 \frac{\sin\frac{\pi(x-z)}M}{\sin\frac{\pi(x-y)}M}
 \Big|
= \Big|
 \frac{\sin\frac{\pi\operatorname{dist}_M(x,z)}M}{\sin\frac{\pi\operatorname{dist}_M(x,y)}M}
 \Big|
 \le 1+\frac{\pi}{2}\frac{\operatorname{dist}_M(y,z)}{\operatorname{dist}_M(x,y)}
 \le1+\frac{C|z-y|}{\operatorname{dist}_M(x,y)},
\end{equation}
whenever the denominator is nonzero, where we have used \eqref{sincbound} and the 1‑Lipschitz continuity of the sine function in the first inequality.

For \(a\in\mathbb R\), put
\[
 \Phi_a(y)=\log\Big|\sin\frac{\pi(a-y)}M\Big|.
\]
Fix $a$, declare a block {\textit near} if it contains \(a\pmod M\), or
if its   ${\textrm dist}_M$-distance at most \(10\kappa\) from
\(a\), otherwise call the block {\textit far}.  By \eqref{eq:new-gap},
\begin{equation}\label{eq:L32-near-mass}
 \sum_{q\ \mathrm{near}}k_q\le C\kappa.
\end{equation}
For the far blocks,
\[
 |\Phi_a''(a+t)|
 \le C\Big(\frac1{t^2}+\frac1{(M-t)^2}\Big),
 \quad 0<t<M.
\]
Hence \eqref{eq:L32-balanced-estimate} and disjointness yield
\begin{equation}\label{eq:L32-far-sum}
 \sum_{q\ \mathrm{far}}|\mathcal D_q(\Phi_a)|\lesssim \kappa^2 \int_{8 \kappa}^{M-8\kappa}\Big(\frac{1}{t^2}+\frac{1}{(M-t)^2}\Big)dt\lesssim \kappa.
\end{equation}
When \(M\le20\kappa\), there are no far blocks; otherwise we just need to integrate $|\Phi_a''(a+t)|$ over \([8\kappa,M-8\kappa]\).

We shall also use the following elementary consequence of ordering
points by their distance from a fixed point.  If \(N\le C\kappa\)
points $\{y_l\}_{l=1}^{N}$ lie within \(1/8\) of distinct integer classes and stay an
absolute distance from \(\mathbb Z\), then, for every integer \(n\),
\begin{equation}\label{eq:L32-ordering-product}
 \prod_{l=1}^N
 \Big(1+\frac{C\kappa}{\operatorname{dist}_M(n,y_l)}\Big)
 \le C^\kappa.
\end{equation}
Indeed, after ordering, \(\operatorname{dist}_M(n,y_l)\ge c l\), and
an integral comparison completes the proof.

\medskip
\noindent
\textbf{Step 3. Product bounds.}
Fix \(x=b_{j,i}\).  Lemma~\ref{L31} bounds the contribution of the
\(j\)-th block by
\(
 C\Big(\frac{C}{\delta}\Big)^{k_j-1}.
\)
For a near block \(q\ne j\), the adjacent gap condition implies
\(\operatorname{dist}_M(x,b_{q,p})\ge k_q+2\).   By
\eqref{eq:L32-sine-ratio}, 
\(\big|\sin\frac{\pi(x-u_{q,p})}{M}/
      \sin\frac{\pi(x-b_{q,p})}{M}\big|\lesssim 1\) for \(q\neq j\).
It follows that 
\[ \Bigg|\prod_{q\neq j,q~{\textrm{near}}}\prod_{p=1}^{k_q}
 \frac{\sin\frac{\pi(x-u_{q,p})}{M}}
      {\sin\frac{\pi(x-b_{q,p})}{M}}\Bigg|\le C^{\kappa}.\]
For \(q\neq j\), since \(x=b_{j,i}\in J_j\), \(c_{q,p}\in J_q\), and the intervals \(J_q\) are cyclically separated by at least \(1/2\), we have
\(
\operatorname{dist}_M(x,c_{q,p})\ge \frac12.
\)
Hence, by \eqref{eq:L32-sine-ratio},   $\prod_{p=1}^{k_q}
 \frac{\sin\frac{\pi(x-w_{q,p})}{M}}
      {\sin\frac{\pi(x-c_{q,p})}{M}} $ in the near block  contribute at most \(C^{k_q}\) when $q\neq j$. By \eqref{eq:L32-near-mass},
\[  \bigg|\prod_{q\ \textrm{near},~q\neq j}\prod_{p=1}^{k_q}
 \frac{\sin\frac{\pi(x-w_{q,p})}{M}}
      {\sin\frac{\pi(x-c_{q,p})}{M}}\bigg|\le \prod _{q\ \mathrm{near},~q\neq j} C^{k_{q}}\le C^{\kappa}. \]
The far blocks
contribute at most \(C^\kappa\) by \eqref{eq:L32-far-sum}.   Therefore
\begin{equation}\label{eq:L32-product-at-B}
\begin{aligned}
 \Bigg|
 &\prod_{q=1}^r\prod_{p=1}^{k_q}
 \frac{\sin\frac{\pi(x-w_{q,p})}{M}}
      {\sin\frac{\pi(x-c_{q,p})}{M}} 
 \prod_{\substack{1\le q\le r,\ 1\le p\le k_q\\(q,p)\ne(j,i)}}
 \frac{\sin\frac{\pi(x-u_{q,p})}{M}}
      {\sin\frac{\pi(x-b_{q,p})}{M}}
 \Bigg|
 \le C\Big(\frac{C}{\delta}\Big)^{\kappa-1}.
\end{aligned}
\end{equation}

Now fix \(n\in\mathbb Z\). Similarly to the proof of \eqref{eq:L32-product-at-B}, by \eqref{eq:L32-near-grid}, we have
\begin{equation}\label{eq:L32-product-at-integers}
\begin{aligned}
 \Bigg|
 &\prod_{q=1}^r\prod_{p=1}^{k_q}
 \frac{\sin\frac{\pi(n-c_{q,p})}{M}}
      {\sin\frac{\pi(n-w_{q,p})}{M}} 
 \prod_{q=1}^r\prod_{p=1}^{k_q}
 \frac{\sin\frac{\pi(n-b_{q,p})}{M}}
      {\sin\frac{\pi(n-u_{q,p})}{M}}
 \Bigg|
 \le C^\kappa,
\end{aligned}
\end{equation}
and 
\begin{equation}\label{eq:L32-deleted-product}
\begin{aligned}
 \Bigg|
 &\prod_{q=1}^r\prod_{p=1}^{k_q}
 \frac{\sin\frac{\pi(n-c_{q,p})}{M}}
      {\sin\frac{\pi(n-w_{q,p})}{M}} 
 \prod_{\substack{1\le q\le r,\ 1\le p\le k_q\\(q,p)\ne(j,i)}}
 \frac{\sin\frac{\pi(n-b_{q,p})}{M}}
      {\sin\frac{\pi(n-u_{q,p})}{M}}
 \Bigg|
 \le C^\kappa.
\end{aligned}
\end{equation}

The factors corresponding to \(\mu_l\) cancel from \(V\) and \(V_m\).
Under the above pairing, \eqref{eq:L32-product-at-integers} and
\eqref{eq:L32-deleted-product} are exactly the bounds for \(V(n)\)
and \(V_m(n)\), respectively.  This proves \eqref{eq:L32-V-bound}.

Let \(m\in\mathbb Z_s\), say
\(\lambda_m=b_{j,i}\) and \(v_m=u_{j,i}\).  Then
\begin{align*}
 \Bigg|\frac{F_{\Lambda_1}'(v_m)}{F_\Lambda'(\lambda_m)}\Bigg|
 &=
 \Bigg|
 \prod_{k\ne m}
 \frac{\sin\frac{\pi(\lambda_m-v_k)}M}
      {\sin\frac{\pi(\lambda_m-\lambda_k)}M}
 \Bigg| \nonumber
 \Bigg|
 \prod_{k\ne m}
 \frac{\sin\frac{\pi(v_m-v_k)}M}
      {\sin\frac{\pi(\lambda_m-v_k)}M}
 \Bigg|.
\end{align*}
The first product is bounded by \eqref{eq:L32-product-at-B}.  For the
second, temporarily label by \(\widetilde v_\nu\) the unique point of
\(\Lambda_1\) lying within \(1/8\) of \(\nu\in\mathbb Z_M\), and let
\(\widetilde v_{\nu_0}=v_m\).  By the fact $s<M/2$ and
\eqref{eq:L32-separation},
\[
 \operatorname{dist}_M(\lambda_m,\widetilde v_{\nu_0})
 \le\kappa<\frac M2,
 \quad
 \min_{\nu\ne\nu_0}
 \operatorname{dist}_M(\lambda_m,\widetilde v_\nu)
 \ge\frac{c}{\kappa}.
\]
Lemma~3.2, with \(d=\kappa\) and \(\eta=c/(2\kappa)\), gives
\[
 \Bigg|
 \prod_{k\ne m}
 \frac{\sin\frac{\pi(v_m-v_k)}M}
      {\sin\frac{\pi(\lambda_m-v_k)}M}
 \Bigg|
 \le C\kappa^4.
\]
Combining this with \eqref{eq:L32-product-at-B} and absorbing
\(\kappa^4\le C^\kappa\) into the absolute constant proves
\eqref{eq:L32-derivative-bound}.
\end{proof}

 \bigskip

\textbf{Proof of Theorem \ref{T001}.}

We first verify that the assumptions of Lemma \ref{L32} are satisfied.
Recall  Assumption \ref{Assumption} and $r\ge2$, set   $\mathcal{C}_j =\{b_{j,i}\}_{i=1}^{k_j}\subset [A_j,B_j]$, where 
\(
B_j-A_j=\alpha_j<\frac{M}{3}.
\)
Since $M\ge4s$ and $k_j\le s-1$,
\[
B_j-A_j+2k_j+\frac32<\frac{9M}{10}.
\]
Moreover,
\[
A_{j+1}-B_j
\ge \beta_{j,j+1}
\ge k_j+k_{j+1}+2.
\]
Therefore, Lemma \ref{L32} is applicable, and $\mathcal B=\bigcup_{j=1}^{r} \mathcal{C}_j$.

By Lemma~\ref{L32}, there exist 
\(\Lambda=\{\lambda_m\mid m\in\mathbb Z_M\),
\(\Lambda_1=\{v_m\mid m\in\mathbb Z_M\),
\(
 \mathcal{B}=\{\lambda_m\mid m\in\mathbb Z_s\}
\), and if $\lambda_{m_{i,j}}=b_{i,j}$, then $v_{m,{i,j}}=u_{i,j}$.
By \eqref{eq:new-gap}, after translating all points by the same integer, we may assume that
all points lie in $[-1/2,M-1/2)$ (we still denote them by $\{\lambda_m\mid m\in\mathbb Z_M\},\{v_m\mid m\in\mathbb Z_M\}$) and $|\lambda_m-v_m|\le \kappa<M/2$. Thus the hypotheses of Lemma~\ref{LLLL27} are satisfied.
Moreover,
\begin{equation}\label{vvmvv}
\max_{x\in\mathbb Z_M}|V(x)|
 +
 \max_{\substack{x\in\mathbb Z_M\\m\in\mathbb Z_s}}
 |V_m(x)|
 \le C^\kappa,
\end{equation}
and
\begin{equation}\label{qqmvv}
 \max_{m\in\mathbb Z_s}
 \Bigg|
 \frac{F_{\Lambda_1}'(v_m)}
      {F_\Lambda'(\lambda_m)}
 \Bigg|
 \le
 C\Big(\frac C\delta\Big)^{\kappa-1}.
\end{equation}

Recall the $m\times s$ matrix $\bA_{\X}$ defined in Theorem
\ref{T001}, where $\X=\mathcal{B}$. Let $\bA_{\sigma,L}$ denote the
matrix associated with the frequency set $\Lambda$. This matrix $\bA_{\sigma,L}$ is formed by
augmenting $\bA_{\X}$ with $\bA_2$, that is,
\[
\bA_{\sigma,L}=[\bA_{\X},\bA_2],
\]
where
\[
\bA_2:=
[\exp(-2\pi i j\lambda_k/M):
j\in\mathbb{Z}_M,\ s\le k\le M-1].
\]
For any vector
\(
\ba=(\ba_1^{\top},{\textbf 0}^{\top})^{\top}
\)
with $\|\ba\|_2=1$, where $\ba_1\in\mathbb{C}^s$ and
${\textbf 0}\in\mathbb{C}^{M-s}$, we have
\[
\begin{aligned}
1
&=\|\ba_1\|_2
 =\|\ba\|_2=\|\bI_s\bA_{\sigma,L}^{-1}
      \bA_{\sigma,L}\ba\|_2\le
\|\bI_s\bA_{\sigma,L}^{-1}\|_2
\|\bA_{\sigma,L}\ba\|_2=
\|\bI_s\bA_{\sigma,L}^{-1}\|_2
\|\bA_{\X}\ba_1\|_2.
\end{aligned}
\]
Choosing the vector $\ba_1$ with $\|\ba_1\|_2=1$ such that
\[
\|\bA_{\X}\ba_1\|_2
=
\inf_{\|\bv\|_2=1}\|\bA_{\X}\bv\|_2,
\]
we conclude that
\[
\sigma_{\min}(\bA_{\X})
\ge
\frac{1}{\|\bI_s\bA_{\sigma,L}^{-1}\|_2}.
\]
Thus, it remains to estimate
$\|\bI_s\bA_{\sigma,L}^{-1}\|_2$.
By Remark \ref{RemarkKadec} and Lemma \ref{27}, we have 
\[ 
\|\bD^*\|_2\le2
\mbox{ and } \sup_{n\in\mathbb Z}\|\bD_n\|_2\le2.
\]
By Lemmas \ref{Lemmarcsingularvalue} and \ref{ExampleCnorm},
\[
\|\bC_n\circ \bD_0\|_2
\lesssim \frac{\kappa}{(|n|-1)^2},
\quad |n|\ge2.
\]
Therefore,
\[
\sum_{|n|\ge2}\|\bC_n\circ \bD_0\|_2\lesssim \kappa.\]
Combining Lemma \ref{LemmaBsigmaL}, Lemma \ref{LLLL27},
\eqref{vvmvv}, and \eqref{qqmvv}, we obtain
\[
\begin{aligned}
\|\bI_s\bA_{\sigma,L}^{-1}\|_2
&=
\frac{\|\bI_s\bB_{\sigma,L}\|_2}{\sqrt{M}}
\lesssim
\frac{C^{\kappa}}{\sqrt{M}}
\sup_{m\in\mathbb{Z}_s}
\Bigg|
\frac{F_{\Lambda_1}'(v_m)}
     {F_{\Lambda}'(\lambda_m)}
\Bigg|
\lesssim
\frac{C^{\kappa}}{\sqrt{M}}
\Big(\frac{C_3}{\delta}\Big)^{\kappa-1}
\lesssim
\frac{1}{\sqrt{M}}
\Big(\frac{C_5}{\delta}\Big)^{\kappa-1},
\end{aligned}
\]
where the last inequality follows from $\kappa\ge2$, after enlarging
the absolute constant $C_5$.

Recalling Assumption \ref{Assumption} and replacing $\delta$ by
$M\delta$, we obtain
\[
\sigma_{\min}(\bA_{\X})
\ge
\frac{1}{\|\bI_s\bA_{\sigma,L}^{-1}\|_2}
\gtrsim
\sqrt{M}
\Big(\frac{M\delta}{C_5}\Big)^{\kappa-1},
\]
which yields the desired conclusion.

\section{Concluding Remarks}\label{Section5}

The methodological contribution of this paper is a reduction of inverse nonuniform Fourier matrix estimates to spectral norm estimates for periodic nonuniform interpolation matrices.
The reduction applies to both clustered configurations and perturbations of an equispaced grid, although
the geometric constructions required in the two settings are different. For clustered nodes, a local family of
auxiliary nodes and compensation points compares a highly nonuniform configuration with a near-integer
interpolation system and yields a separation condition determined by adjacent cluster sizes. 
For perturbed grids, a comparison with a Kadec-stable configuration leads to consequences for $2$-norm Lebesgue constants and trigonometric quadrature. We believe that our framework also applies to singular value estimates under the classical minimum-separation condition. In that setting, however, the interpolation scheme used in Lemma \ref{L31} is not applicable. Since the lower bound established in \cite{aubel2019vandermonde} is already nearly optimal, we do not pursue this case in the present paper.

\appendix
\setcounter{section}{1}
\section*{Appendix A: The Pointwise Convergence of \eqref{gs0pre} on \texorpdfstring{\((-\pi,\pi)\)}{}}\label{AppendixA}

We shall prove that \eqref{gs0pre} holds pointwise for every
\(y\in(-\pi,\pi)\). To begin with, assume 
\[
j\notin\{\tau_{mn}\mid m\in\mathbb Z_M,\ n\in\mathbb Z\}.
\]
For each \(m\in\mathbb Z_M\), set
\(
a_m:=\frac{j-\lambda_m}{M}.
\)
Clearly, \(a_m\notin\mathbb Z\).
Then, by \eqref{psimn}, we have 
\begin{equation}\label{pointwise_series}
\sum_{n\in\mathbb Z}\psi_{mn}(j)e^{i\tau_{mn}y}
\!=\!\sum_{n\in\mathbb Z}\frac{F_{\Lambda}(j) e^{i(\lambda_m+nM)y}}{F'_{\Lambda}(\lambda_m+nM)(j-\lambda_m-nM)}
\!=\!
\frac{F_\Lambda(j)e^{i\lambda_m y}}
     {M F_\Lambda'(\lambda_m)}
\sum_{n\in\mathbb Z}
\frac{e^{inM(y+\pi)}}{a_m-n}.
\end{equation}
By \eqref{Fourierexpansion}, 
\[
 \frac{\pi e^{ia(t-\pi)}}{\sin(\pi a)}
\sim
\sum_{n\in\mathbb Z}\frac{e^{int}}{a-n},
\quad 0\le t\le2\pi,\ a\notin\mathbb{Z}.
\]
Let \(g_a\) denote the periodic extension of \(\frac{\pi e^{ia(t-\pi)}}{\sin(\pi a)}\).
The periodic function \(g_a\) is continuously differentiable away from the
points \(2\pi\mathbb Z\), and has finite one-sided limits at those points.
Hence, the Dirichlet--Jordan theorem \cite[Page 128]{SteinShakarchiFourier} implies that its symmetric Fourier
partial sums converge at every \(t\in\mathbb R\): they converge to \(g_a(t)\)
when \(t\notin2\pi\mathbb Z\), and to
\(
\frac{g_a(t-)+g_a(t+)}{2},
\)
when \(t\in2\pi\mathbb Z\).
Taking
\(
t=M(y+\pi),
\)
we conclude that for each fixed \(m\), the series on the right-hand side of
\eqref{pointwise_series} converges for every \(t\in(2l\pi,2(l+1)\pi)\), $l=0,1,\dots,M-1$. Then \eqref{gs0pre} holds for ever $y\in(y_{l},y_{l+1})$, where $y_l=-\pi+\frac{2l\pi}{M}$.

It remains to consider  points \(\{y_l\mid  l=1,2,\dots,M-1\}\). For every fixed \(m\),
the Dirichlet--Jordan theorem gives the midpoint value at \(y_\ell\).
Since the sum over \(m\) is finite and the factors
\(
\frac{F_\Lambda(j)e^{i\lambda_m y}}
     {M F_\Lambda'(\lambda_m)}
\)
are continuous in \(y\), it follows that
\[
\sum_{n\in\mathbb{Z}}
\sum_{m\in\mathbb{Z}_M}
\psi_{mn}(j)e^{i\tau_{mn}y_l}=\frac{\lim_{y\rightarrow{y_{l}^{+}}}e^{ijy}+\lim_{y\rightarrow{y_{l}^{-}}}e^{ijy}}{2}=e^{ijy_l}.
\]
Thus,
\[
e^{ijy}
=
\sum_{n\in\mathbb Z}\sum_{m\in\mathbb Z_M}
\psi_{mn}(j)e^{i\tau_{mn}y},
\quad y\in(-\pi,\pi).
\]
Finally, if
\(
j=\tau_{m_0n_0}
\)
for some \(m_0\in\mathbb Z_M\) and \(n_0\in\mathbb Z\), then the conclusion
follows directly from the cardinal interpolation property
\(
\psi_{mn}(\tau_{m_0n_0})=\delta_{m,m_0}\delta_{n,n_0}
\).
This completes the proof.

\subsection*{Appendix B. Proofs of Lemmas \ref{Cnorm} and \ref{ExampleCnorm}}\label{AppendixB}

\textbf{Proof of Lemma \ref{Cnorm}}. We use the standard identity
\[
\sum_{k\in\mathbb Z}\frac{1}{|k-x|^2}=\frac{\pi^2}{\sin^2(\pi x)}\mbox{ for any } x\notin\mathbb{Z}.
\]
Since $\min\{\mathsf{r}_1(\cdot),\mathsf{c}_1(\cdot)\}\le \mathsf{r}_1(\cdot)$, it is enough to bound $\mathsf{r}_1(\widetilde{\bC}_n)$.

\textbf{Case $n=0$.} For $m\in T$, we have $\delta_m:=|\lambda_m-m|\in(\frac14-\epsilon,\frac12]$. As $\epsilon\le\frac18$, we get $\delta_m>\frac18$. Hence,
\[
\sum_{j=0}^{M-1}\frac{1}{|j-\lambda_m|^2}
\le \sum_{k\in\mathbb Z}\frac{1}{|k-\lambda_m|^2}
= \frac{\pi^2}{\sin^2(\pi\delta_m)}.
\]
Since $\sin^2(\pi x)$ is increasing on $(0,\frac12]$, we have
\[
\mathsf{r}_1(\widetilde{\bC}_0)\le \sqrt{\frac{\pi^2}{\sin^2(\pi\delta_m)}}
<\frac{\pi}{\sin(\pi/8)}.
\]

\textbf{Case $n=\pm1$.} We begin with the case $n=1$. Fix $m\in T$ and set $\delta=\lambda_m-m$, with $|\delta|<1/2$. For $j\in\mathbb Z_M$,
\[
j-\lambda_m-M = j-m-M-\delta.
\]
Put $k=M+m-j$. As $j$ ranges over $0,1,\dots,M-1$, $k$ ranges over $m+1,m+2,\dots,M+m$, so $k\ge1$. Then
\[
|j-\lambda_m-M| = |k+\delta| \ge k-|\delta| > k-\frac12.
\]
Therefore,
\[
\sum_{j=0}^{M-1}\frac{1}{|j-\lambda_m-M|^2}
\le \sum_{k=1}^{\infty}\frac{1}{(k-\frac12)^2}
= \frac{\pi^2}{2}.
\]
The case $n=-1$ follows by a similar substitution (e.g., $k=j-m+M\ge1$). Hence,
\[
\mathsf{r}_1(\widetilde{\bC}_{\pm1}) \le \frac{\pi}{\sqrt2},
\]
and consequently $\min\{\mathsf{r}_1(\widetilde{\bC}_{\pm1}),\mathsf{c}_1(\widetilde{\bC}_{\pm1})\}\le \mathsf{r}_1(\widetilde{\bC}_{\pm1}) \le \pi/\sqrt2$. This proves the lemma.

\bigskip 

\textbf{Proof of Lemma \ref{ExampleCnorm}.}
For any $m\in T$ and $j\in\mathbb Z_M$, we have $|j-\lambda_m|\le M$ and $|j-v_m|\le M$. For $|n|\ge2$,
\[
|j-\lambda_m-nM| \ge |n|M - |j-\lambda_m| \ge (|n|-1)M,
\]
and similarly $|j-v_m-nM| \ge (|n|-1)M$. Thus,
\[
\Big|\frac{1}{j-\lambda_m-nM}\frac{j-v_m}{j-v_m-nM}(-1)^{nM}\Big|
\le \frac{1}{(|n|-1)M}\cdot \frac{M}{(|n|-1)M}
= \frac{1}{(|n|-1)^2M}.
\]
Summing over $j\in\mathbb Z_M$ yields the upper bound
\[
 M\cdot \frac{1}{(|n|-1)^4M^2}
= \frac{1}{(|n|-1)^4M}
\le \frac{1}{2(|n|-1)^4},
\]
as $M\ge2$. Taking the square root yields
\[
\mathsf{r}_1(\widetilde{\widetilde{\bC}}_n)\le \frac{1}{\sqrt{2}(|n|-1)^2}.
\]
Because $\min\{\mathsf{r}_1(\widetilde{\widetilde{\bC}}_{n}),\mathsf{c}_1(\widetilde{\widetilde{\bC}}_{n})\}\le \mathsf{r}_1(\widetilde{\widetilde{\bC}}_{n})$, the claimed bound follows.

\setcounter{section}{3}
\subsection*{Appendix C. Proof of Lemma \ref{L25}}\label{AppendixC}

Throughout the proof, we frequently use the elementary estimates \eqref{sincbound} for the sine function.

If $M=2$, then there is only one factor in \eqref{Prodeq}.
By the assumption
\[
\min_{k\in\mathbb Z_M,k\neq j}\operatorname{dist}_M(x,v_k)>\eta
\]
and \eqref{sincbound}, the left-hand side of \eqref{Prodeq} is bounded
by $1/\eta$.

If $M=3$, then the two points $v_k$, $k\ne j$, are separated modulo
$M$ by more than $1/2$. Hence, at least one of them has modular
distance at least $1/4$ from $x$. Since the other one has distance
larger than $\eta$, \eqref{sincbound} gives
\[
\Bigg|
\prod_{k\in\mathbb Z_M\setminus\{j\}}
\frac{\sin\frac{\pi(v_j-v_k)}{M}}
     {\sin\frac{\pi(x-v_k)}{M}}
\Bigg|
\lesssim \frac1\eta.
\]

We assume $M\ge4$ below. Set $y:=x-v_j$. By periodicity and
symmetry, we may choose the representative of $y$ such that
\[
0\le y=\operatorname{dist}_M(x,v_j)\le d\le\frac M2.
\]

For $k\in\mathbb Z_M$, let
\(p_k:=(j+k)\ {\textrm mod}\ M
\)
and choose $q_k\in\{0,1\}$ such that
$j+k=p_k+q_kM$. Define
\(
w_k:=q_kM+v_{p_k}-v_j.
\)
Then
$
w_k=k+(v_{p_k}-p_k)-(v_j-j).
$
In particular, $w_0=0$, and
\begin{equation}\label{wkeq1}
|w_k-k|<\frac12,
\quad
0<w_k<M,
\quad
k=1,2,\ldots,M-1.
\end{equation}
Moreover,
\[
\operatorname{dist}_M(w_k,w_\ell)
=
\operatorname{dist}_M(v_{p_k},v_{p_\ell})
>\frac12,
\quad k\ne\ell,
\]
and
\begin{equation}\label{wkyeta}
\operatorname{dist}_M(w_k,y)
=
\operatorname{dist}_M(v_{p_k},x)
>\eta.
\end{equation}

Using the parity and periodicity of the sine function, we obtain
\begin{equation}\label{twoparts}
\Bigg|
\prod_{k\in\mathbb Z_M\setminus\{j\}}
\frac{\sin\frac{\pi(v_j-v_k)}{M}}
     {\sin\frac{\pi(x-v_k)}{M}}
\Bigg|
=
\prod_{k=1}^{M-1}
\Bigg|
\frac{\sin\frac{\pi w_k}{M}}
     {\sin\frac{\pi(w_k-y)}{M}}
\Bigg|.
\end{equation}

For brevity, write
\[
S(t):=\Big|\sin\frac{\pi t}{M}\Big|.
\]
By \eqref{sincbound},
\begin{equation}\label{Sbound}
\frac{2}{M}\operatorname{dist}_M(t,0)
\le S(t)
\le
\frac{\pi}{M}\operatorname{dist}_M(t,0).
\end{equation}

We first consider $0\le y<3/2$. For $k=1,2$, we have
$S(w_k)\lesssim M^{-1}$. Since $w_1,w_2$ are separated modulo
$M$ by more than $1/2$, at most one of them can have distance less
than $1/4$ from $y$. Using \eqref{wkyeta} for this possible point
and \eqref{Sbound} for the other one, we obtain
\begin{equation}\label{small-local}
\prod_{k=1}^{2}
\frac{S(w_k)}{S(w_k-y)}
\lesssim\frac1\eta.
\end{equation}
For $k\ge3$, \eqref{wkeq1} gives $w_k>y$. The function
\(
t\mapsto \sin\frac{\pi t}{M}/
     \sin\frac{\pi(t-y)}{M}
\)
is decreasing on $(y,M)$. Therefore,
\[
\prod_{k=3}^{M-1}\frac{S(w_k)}{S(w_k-y)}
\le
\prod_{k=3}^{M-1}
\frac{S(k-\frac12)}{S(k-\frac12-y)}.
\]
Put $\beta=y+\frac12$. The finite product identity \cite[(548) in Section 1.391]{Gradshteyn2014}
\[
\prod_{k=0}^{M-1}
\Big|\sin\frac{\pi(k+a)}{M}\Big|
=
2^{1-M}|\sin(\pi a)|
\]
gives
\[
\prod_{k=3}^{M-1}
\frac{S(k-\frac12)}{S(k-\frac12-y)}
=
\frac{
S(\beta)S(1-\beta)S(2-\beta)}
{
|\sin(\pi\beta)|S(\frac12)^2S(\frac32)
},
\]
where the quotient is understood by continuity at integer values
of $\beta$.

Let $\delta=\operatorname{dist}(\beta,\mathbb Z)$. Then
$|\sin(\pi\beta)|\gtrsim\delta$, while one of the three factors
in the numerator is $\lesssim\delta/M$ and the other two are
$\lesssim M^{-1}$. Also,
\(
S\Big(\frac12\Big)^2S\Big(\frac32\Big)
\gtrsim M^{-3}.
\)
Hence
\[
\prod_{k=3}^{M-1}\frac{S(w_k)}{S(w_k-y)}
\lesssim1.
\]
Together with \eqref{small-local}, this proves \eqref{Prodeq} for
$0\le y<3/2$.

We now assume $y\ge3/2$. Write
\[
y=m-\frac12+\alpha,
\quad
m\ge2,
\quad
0\le\alpha<1.
\]
Since $y\le M/2$, we have $m\le(M+1)/2$. Let
\(
\mathcal L:=\{m-1,m,m+1\}.
\)
For $k\in\mathcal L$, \eqref{wkeq1} gives
\(
S(w_k)\lesssim\frac{1+y}{M}.
\)
Since the $w_k$ are mutually separated modulo $M$ by more than
$1/2$, at most one of them can have distance less than $1/4$ from
$y$. Using \eqref{wkyeta} for this possible point and
\eqref{Sbound} for the other two, we obtain
\begin{equation}\label{local-bound}
\prod_{k\in\mathcal L}
\frac{S(w_k)}{S(w_k-y)}
\lesssim
\frac{(1+y)^3}{\eta}.
\end{equation}

For $k\le m-2$, the function
\(
t\mapsto
\sin\frac{\pi t}{M}/
     \sin\frac{\pi(y-t)}{M}
\)
is increasing on $(0,y)$, whereas for $k\ge m+2$ the function
\(
t\mapsto\sin\frac{\pi t}{M}/\sin\frac{\pi(t-y)}{M}
\)
is decreasing on $(y,M)$. Consequently,
\[
\begin{aligned}
&\prod_{k\in\{1,2,\ldots,M-1\}\setminus\mathcal L}
\frac{S(w_k)}{S(w_k-y)}
\le
\prod_{k=1}^{m-2}
\frac{S(k+\frac12)}{S(k+\alpha)}
\prod_{k=m+1}^{M-2}
\frac{S(k+\frac12)}{S(k+\alpha)}.
\end{aligned}
\]
Using the same finite product identity, the right-hand side equals
\[
\frac{S(\alpha)S(1-\alpha)}
     {|\sin(\pi\alpha)|S(\frac12)^2}
\,
\frac{S(m-1+\alpha)S(m+\alpha)}
     {S(m-\frac12)S(m+\frac12)},
\]
where the expression is understood by continuity at $\alpha=0$.

By \eqref{Sbound},
\(
\frac{S(\alpha)S(1-\alpha)}
     {|\sin(\pi\alpha)|S(\frac12)^2}
\lesssim1.
\)
Moreover, each numerator argument in the second quotient differs
from the corresponding denominator argument by at most $1/2$,
while $\operatorname{dist}_M(m\pm \frac12,0)\ge1$ due to $m\ge2$.
Another application of \eqref{Sbound} therefore gives
\[
\frac{S(m-1+\alpha)S(m+\alpha)}
     {S(m-\frac12)S(m+\frac12)}
\lesssim1.
\]
Thus
\begin{equation}\label{nonlocal-bound}
\prod_{k\in\{1,2,\ldots,M-1\}\setminus\mathcal L}
\frac{S(w_k)}{S(w_k-y)}
\lesssim1.
\end{equation}

Combining \eqref{twoparts}, \eqref{local-bound}, and
\eqref{nonlocal-bound}, we obtain
\[
\Bigg|
\prod_{k\in\mathbb Z_M\setminus\{j\}}
\frac{\sin\frac{\pi(v_j-v_k)}{M}}
     {\sin\frac{\pi(x-v_k)}{M}}
\Bigg|
\lesssim
\frac{(1+y)^3}{\eta}.
\]
Since $0\le y\le d$, the desired result follows.

\setcounter{section}{4}
\subsection*{Appendix D. Proof of Theorem \ref{Thmlowerbound}}\label{AppendixD}

We estimate a lower bound for the spectral norm of the corresponding periodic nonuniform interpolation matrix $\bB_{\sigma,L}$ defined as in \eqref{DefeqBsigmaL} by choosing a special frequency set \(\Lambda=\{\lambda_k\mid  k\in\mathbb{Z}_M\}\). We require the following auxiliary lemma.

\begin{lemma}\label{445} Let $N\ge1$ be an integer and take $M=2N+1$. 
Let $1/4\le L<1/2$ and let \(\Lambda=\{\lambda_k\mid  k\in\mathbb{Z}_M\}\), where $\lambda_k$ follows the specification in \eqref{eqspecifiedLambda}. The function $F_{\Lambda}$ is defined by \eqref{FLambdax}.
Then, 
\[
 \Big|
\sum_{n\in\mathbb Z}
\frac{e^{-inM/10}F_\Lambda(0)}
{F'_\Lambda(\lambda_N+nM)(-\lambda_N-nM)}
\Big|
\gtrsim
\frac{M^{4L-1}}{1-2L}.
\]
\end{lemma}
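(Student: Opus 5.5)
The plan is to separate the sum into an explicit lattice sum times the ratio $F_\Lambda(0)/F'_\Lambda(\lambda_N)$, and then estimate that ratio from below by comparing it with the equispaced grid.

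\textbf{Step 1: factor out the periodic part.} From \eqref{FLambdax} we have $F_\Lambda(x+M)=(-1)^M F_\Lambda(x)$. Since $M=2N+1$ is odd, this gives $F'_\Lambda(\lambda_N+nM)=(-1)^{n}F'_\Lambda(\lambda_N)$. The sum in Lemma~\ref{445} therefore equals
\[
-\frac{F_\Lambda(0)}{M\,F'_\Lambda(\lambda_N)}\sum_{n\in\mathbb Z}\frac{e^{in(\pi-M/10)}}{n+\lambda_N/M}.
\]
Let $t\in(0,2\pi)$ be the representative of $\pi-M/10$ modulo $2\pi$. If $t=0$ can occur, I would use the midpoint value from Appendix~A instead, which has the same modulus. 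Applying \eqref{Fourierexpansion} with $\beta=\lambda_N/M$, the lattice sum has modulus exactly $\pi/|\sin(\pi\lambda_N/M)|$. Because $\lambda_N=N+L$ and $\lambda_N/M\in(1/2-1/(2M),\,1/2+1/(2M))$, this modulus is at least $\pi$ in absolute terms. It then suffices to prove
\[
\Big|\frac{F_\Lambda(0)}{F'_\Lambda(\lambda_N)}\Big|\gtrsim \frac{M^{4L}}{1-2L}.
\]

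\textbf{Step 2: write the ratio as explicit sine products.} Write $S(t)=|\sin(\pi t/M)|$. Then
\[
|F_\Lambda(0)|=\prod_{k=0}^{N}S(k+L)\prod_{k=N+1}^{2N}S(k-L),
\]
\[
|F'_\Lambda(\lambda_N)|=\frac{\pi}{M}\prod_{k=0}^{N-1}S(N-k)\prod_{k=N+1}^{2N}S(k-N-2L).
\]
The factor $k=N+1$ in the second product is $S(1-2L)\le \pi(1-2L)/M$; this is exactly where the $1/(1-2L)$ comes from. I would pair each factor with its grid analogue ($S(k)$, or $S(k-N)$) and use the identity $\prod_{k=1}^{M-1}S(k)=M2^{1-M}$, which is the case $a\to0$ of the product identity used in Appendix~C. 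This turns $|F_\Lambda(0)|$ into $S(L)\cdot(\text{grid})\cdot\prod_{k}\frac{S(k+L)}{S(k)}$ with the corresponding reflected factors. All points are pushed \emph{away} from $0$ modulo $M$: near $0$ via $k+L$, and near $M$ via $k-L$. Similarly, in $F'_\Lambda(\lambda_N)$ the right-hand neighbours of $\lambda_N$ are pushed \emph{toward} it by $2L$, while the left-hand distances are unchanged.

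\textbf{Step 3: asymptotics of the product quotients.} For $d$ ranging over distances $1,2,\dots,\approx M/2$, I will bound each quotient $S(d\pm c)/S(d)$ from below by $\exp(\pm c/d-O(1/d^2)-O(c/M))$, uniformly for $c\in\{L,2L\}$. This uses $\log\sin$ concavity and \eqref{sincbound}. Summing with $\sum_{d\le M/2}1/d=\log M+O(1)$ gives the following:
\begin{itemize}
\item the two half-products in $F_\Lambda(0)$ each contribute $\gtrsim M^{L}$, so $F_\Lambda(0)$ gains $M^{2L}$ over the grid;
\item the right-hand product in $F'_\Lambda(\lambda_N)$, excluding the $k=N+1$ term, loses a factor $\lesssim M^{-2L}$ (with $\gtrsim$ direction reversed as needed).
\end{itemize}
An alternative is exact Gamma-function asymptotics, $\prod_{d\le D}(d+c)/d\asymp D^{c}/\Gamma(1+c)$, after linearizing $\sin$ for $d\le M/4$ and bounding the remaining factors uniformly near $M/2$. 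The grid normalizations then cancel up to absolute constants, including the leftover $S(L)\asymp 1/M$ against the $\pi/M$ prefactor. This yields the ratio bound $\gtrsim M^{4L}/(1-2L)$, and combining with Step~1 gives the claim.

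\textbf{Main obstacle.} The bookkeeping in Step~3 is where I expect the difficulty. It requires tracking the wrap-around factors (distances near $M/2$, where $\sin$ is flat and the shifts cost only $O(1/M)$ each) and making sure the $O(1)$ errors are uniform in $L\in[1/4,1/2)$. In particular, the factor $1-2L$ must appear only through the single term $S(1-2L)$ and not leak into the other constants. I would first try the Gamma-function route, since it gives two-sided control with explicit uniform constants.
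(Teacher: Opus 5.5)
Your proposal is correct and follows essentially the paper's argument: the same anti-periodicity reduction and closed-form evaluation of the lattice sum via \eqref{Fourierexpansion} (the case $t=0$ never occurs, since $\pi$ is irrational), followed by comparing the resulting sine products with an equispaced grid and using Gamma-function asymptotics. The differences are only cosmetic. The paper factors $F_\Lambda=AH$, evaluates the shifted-grid part $A$ exactly, and extracts $1/(1-2L)$ from $\Gamma(1-2L)$; you instead normalize by $\prod_{k=1}^{M-1}S(k)=M2^{1-M}$ and isolate the single factor $S(1-2L)$, which amounts to the same thing because $\Gamma(1-2L)=\Gamma(2-2L)/(1-2L)$.
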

\begin{proof}
Since $M=2N+1$, we have \(\lambda_N=N+L\). Set
\(b:=-\lambda_N=-(N+L)\). Because $M$ is odd, the function $F_{\Lambda}$ is anti-periodic with period $M$, namely, 
\(F_\Lambda(x+M)=-F_\Lambda(x)\).
Consequently, 
\begin{equation}\label{Lowerboundeq1}
\sum_{n\in\mathbb Z}
e^{-inM/10}
\frac{F_\Lambda(0)}
{F'_\Lambda(\lambda_N+nM)(-\lambda_N-nM)}
=
\frac{F_\Lambda(0)}{F'_\Lambda(\lambda_N)}
\sum_{n\in\mathbb Z}
\frac{e^{-inM/10}(-1)^n}{b-nM}.
\end{equation}
Let $\theta_M\in(-\pi,\pi)$ such that $\theta_M:=M/10\pmod{2\pi}$. Invoking \eqref{Fourierexpansion} with $a=b/M$ and $t=\pi-\theta_M$, we obtain
\[
\sum_{n\in\mathbb Z}
\frac{e^{-inM/10}(-1)^n}{b-nM}
=-\frac{1}{M}\sum_{n\in\mathbb Z}
\frac{e^{in(\pi-M/10)}}{n-b/M}
=-\frac{\pi}{M}
e^{i\frac{b}{M}(\theta_M-\pi)}
\frac{1}{\sin(\frac{\pi b}{M})}.
\]
Therefore,
\begin{equation}\label{Lowerboundeq2}
\Big|
\sum_{n\in\mathbb Z}
\frac{e^{-inM/10}(-1)^n}{b-nM}
\Big|
=
\frac{\pi}{M}
\Big|\frac{1}{\sin(\frac{\pi b}{M})}
\Big|\in \Big[\frac{\pi}{M},
\frac{\sqrt{2}\pi}{M}\Big],
\end{equation}
where we have used the fact $1\ge |\sin(\pi b/M)|\ge 1/\sqrt{2}$ as
\(\frac{b}{M}=-\frac{N+L}{2N+1}\in(-\frac12,-\frac14)\).

Therefore, it suffices to prove the lower bound
\[
\Big|
\frac{F_\Lambda(0)}{F_\Lambda'(\lambda_N)}
\Big|
\gtrsim
\frac{M^{4L}}{1-2L}.
\]
Observe that $\Lambda=\{j+L\mid  0\le j\le N\}\cup\{j-L\mid  N+1\le j\le 2N\}$.
We factor
\[
F_\Lambda(x)=A(x)H(x),
\]
where
\[
A(x)
=
\prod_{j=0}^{M-1}
\sin\Big(\frac{\pi(x-j-L)}{M}\Big)
\mbox{ and }
H(x)
=
\prod_{j=N+1}^{2N}
\frac{
\sin\big(\frac{\pi(x-j+L)}{M}\big)
}{\sin\big(\frac{\pi(x-j-L)}{M}\big)}.
\]

The finite product identity for the sine function gives
\(A(x)=(-1)^{M+1} 2^{1-M}\sin\pi(x-L)\) (see, for example, \cite[(1.392)]{Gradshteyn2014}).
Since $\lambda_N=N+L$ is a zero of $A$, we have
\[
F_\Lambda'(\lambda_N)=A'(\lambda_N)H(\lambda_N)+A(\lambda_N)H'(\lambda_N)=A'(\lambda_N)H(\lambda_N)
\]
and hence
\begin{equation}\label{L51eq0}
\Big|
\frac{F_\Lambda(0)}{F_\Lambda'(\lambda_N)}
\Big|
=
\frac{|A(0)|}{|A'(\lambda_N)|}
\Big|
\frac{H(0)}{H(\lambda_N)}
\Big|.
\end{equation}
Direct computation yields \(|A(0)|=2^{1-M}\sin(\pi L)\) and \(|A'(\lambda_N)|=\pi 2^{1-M}.\)
Since $1/4\le L<1/2$ and $\sin(\pi L)\ge 1/\sqrt{2}$,
\begin{equation}\label{L51eq00}
\frac{|A(0)|}{|A'(\lambda_N)|}\ge\frac{1}{\pi\sqrt{2}}.
\end{equation}

It remains to estimate $H$. Define
\(
S(t):=|\sin(\frac{\pi t}{M})|.
\)
Writing $j=N+r$, $1\le r\le N$, and using the symmetry $S(t)=S(M-t)$, we obtain
\[
|H(0)|
=
\prod_{r=1}^{N}
\frac{S(N+r-L)}{S(N+r+L)}=
\prod_{r=1}^{N}
\frac{S(r+L)}{S(r-L)}.
\]
Likewise,
\[
|H(\lambda_N)|
=
\prod_{r=1}^{N}
\frac{S(r-2L)}{S(r)}.
\]
Therefore
\[
\Big|
\frac{H(0)}{H(\lambda_N)}
\Big|
=
\prod_{r=1}^{N}
\frac{S(r+L)}{S(r-L)}
\prod_{r=1}^{N}
\frac{S(r)}{S(r-2L)}.
\]

For all $r=1,2,\dots,N$, the arguments $r+L$, $r-L$, $r$, and $r-2L$ lie in $(0,M/2)$. Since $\frac{1}{\sin t}\le 1+\frac{1}{t}$ for any $0<t\le \frac{\pi}{2}$ (as the function $(1+t)\sin(t) -t$ is strictly increasing on $(0,\frac{\pi}{2}]$ with  a function value $0$ at $t=0$),
\begin{equation}\label{L51eq1}
\prod_{r=1}^{N}\frac{S(r+L)}{S(r-L)}
\asymp
\prod_{r=1}^{N}\frac{r+L}{r-L}=\frac{\Gamma(N+1+L)\Gamma(1-L)}{\Gamma(N+1-L)\Gamma(1+L)}
\asymp N^{2L},
\end{equation}
where the last comparison follows from the Gautschi inequality \cite[(5.6.4)]{Olver2010}
\[
x^{1-s}<\frac{\Gamma(x+1)}{\Gamma(x+s)}<(x+1)^{1-s},\mbox{ for any }x>0, 0<s<1.
\]
Similarly,
\[
\prod_{r=1}^{N}\frac{S(r)}{S(r-2L)}
\asymp
\prod_{r=1}^{N}\frac{r}{r-2L}=
\frac{\Gamma(N+1)\Gamma(1-2L)}{\Gamma(N+1-2L)}.
\]
Again, the Gautschi inequality gives $\Gamma(N+1)/\Gamma(N+1-2L)\asymp N^{2L}$. As $0<1-2L\le 1/2$ and
\[
\Gamma(x)=\int_0^{\infty} t^{x-1}e^{-t}dt\ge\int_0^1 t^{x-1}e^{-t}dt
\ge e^{-1}\int_0^1 t^{x-1}dt=\frac{e^{-1}}{x},
\]
we have $\Gamma(1-2L)\gtrsim 1/(1-2L)$. Hence
\begin{equation}\label{L51eq2}
\prod_{r=1}^{N}\frac{s(r)}{s(r-2L)}
\gtrsim
\frac{N^{2L}}{1-2L}.
\end{equation}
Combining \eqref{L51eq1} and \eqref{L51eq2} yields
\[
\Big|\frac{H(0)}{H(\lambda_N)}\Big|
\gtrsim
\frac{N^{4L}}{1-2L}.
\]
By \eqref{L51eq0}, \eqref{L51eq00}, and $M=2N+1$, 
\begin{equation}\label{Lowerboundeq3}
\Big|\frac{F_\Lambda(0)}{F_\Lambda'(\lambda_N)}\Big|
\gtrsim
\frac{M^{4L}}{1-2L}.
\end{equation}
Combining \eqref{Lowerboundeq1}, \eqref{Lowerboundeq2}, and  \eqref{Lowerboundeq3}, we obtain
\[
\Big|
\sum_{n\in\mathbb{Z}}
\frac{e^{-inM/10}F_\Lambda(0)}
{F_\Lambda'(\lambda_N+nM)(-\lambda_N-nM)}
\Big|
\gtrsim
\frac{M^{4L}}{1-2L}\cdot \frac1M
=
\frac{M^{4L-1}}{1-2L},
\]
which yields the desired conclusion.
\end{proof}

\textbf{Proof of Theorem \ref{Thmlowerbound}.} 

Recalling that $\psi_{mn}^{*}$ is defined as in \eqref{psimnstar}, we have
\[
\psi^*_{Nn}(0)=\frac{F_{\Lambda}(0) e^{-inM/10}}{F_{\Lambda}'(\tau_{Nn})(-\lambda_N-nM)}.
\]
By Lemma \ref{LemmaBsigmaL} and the definition of $\bB_{\sigma,L}$ defined as in \eqref{DefeqBsigmaL},
\begin{equation}\label{Lowerboundeq4}
\|\bA_{\sigma,L}^{-1}\|_2\ge M^{-\frac12}\|\bB_{\sigma,L}\|_2\ge M^{-\frac12}\Big|\sum_{n\in\mathbb{Z}}\psi_{Nn}^*(0)\Big|,
\end{equation}
where we have used the fact that the spectral norm of a matrix is at least the absolute value of any entry. Combining Lemma \ref{445} and \eqref{Lowerboundeq4}, we obtain
\[
\|\bA_{\sigma,L}^{-1}\|_2\gtrsim M^{-\frac12}\frac{M^{4L-1}}{1-2L}=\frac{M^{4L-\frac32}}{1-2L},
\]
which completes the proof of Theorem \ref{Thmlowerbound}.

\section*{Declaration of generative AI use}
We used DeepSeek and ChatGPT-5.5 Plus to assist with language translation and polishing, as well as with the preparation of BibTeX entries. These tools were also used to generate initial proof drafts for Lemma \ref{L25}, Lemma \ref{L31}, and Lemma \ref{L32}, based on mathematical statements of the lemmas provided by the authors.
In particular, the authors developed the statements and auxiliary constructions of Lemma \ref{L31} and Lemma \ref{L32}, together with their roles in the proof of Theorem \ref{T001}. Once the auxiliary constructions had been formulated by the authors, the remaining arguments consisted mainly of direct estimates. The AI-generated proof drafts were subsequently checked, corrected, and revised by the authors. The authors take full responsibility for the mathematical correctness, originality, and final content of the manuscript.

\begin{funding}
Liang Chen was supported in part
by the National Natural Science Foundation of China under grant 12461019. Haizhang Zhang was supported in part by the National Natural Science Foundation of China under grant 12371103.
\end{funding}


\bibliographystyle{emss}
\bibliography{sample}

\end{document}